\allowdisplaybreaks \allowdisplaybreaks[4]
\begin{document}

\title{On weighted compactness of commutators of Schr\"{o}dinger operators}

\author{Qianjun He}
\address{School of Applied Science, Beijing Information Science and Technology University, Beijing, 100192, P. R. China.}
\email{qjhe@@bistu.edu.cn}

\author{Pengtao Li}
\address{College of Mathematics, Qingdao University, Qingdao, Shandong, 266071, P. R. China.}
\email{ptli@@qdu.edu.cn}

\subjclass[2010]{Primary: 42B20; Secondary: 47B47, 42B35}
\keywords{Commutator; compactness; Schr\"{o}ding operator; weight function}

\begin{abstract}
Let $\mathcal{L}=-\Delta+\mathit{V}(x)$ be a Schr\"{o}dinger operator, where $\Delta$ is the Laplacian operator on $\mathbb{R}^{d}$ $(d\geq 3)$, while the nonnegative potential $\mathit{V}(x)$ belongs to the reverse H\"{o}lder class $B_{q}, q>d/2$. In this paper, we study weighted compactness of commutators of some Schr\"{o}dinger operators, which include Riesz transforms, standard Calder\'{o}n-Zygmund operatos and Littlewood-Paley functions. These results generalize substantially some well-know results.
\end{abstract}

\maketitle
\numberwithin{equation}{section}
\newtheorem{theorem}{Theorem}[section]
\newtheorem{lemma}[theorem]{Lemma}
\newtheorem{definition}[theorem]{Definition}
\newtheorem{corollary}[theorem]{Corollary}
\newtheorem{proposition}[theorem]{Proposition}
\newtheorem{claim}[theorem]{Claim}
\newtheorem{remark}[theorem]{Remark}
\allowdisplaybreaks

\section{Introduction}

Let us consider the Schr\"{o}dinger differential operators $\mathcal{L}=-\Delta+\mathit{V}(x) $ on ${\mathbb{R}}^{d}$ with $d\geq3$, where $\Delta$ is the Laplace operator on $\mathbb{R}^{d}$, and the potential $\mathit{V}\geq0$ is a function satisfying the reverse H\"{o}lder inequality(called the reverse H\"{o}lder class $B_{q}$) for some $q>d/2$
\begin{equation}\label{reverse_holder}
\left(\frac{1}{|B|}\int_{B}\mathit{V}(y)^{q}\right)^{1/q}\leq \frac{C}{|B|}\int_{B}\mathit{V}(y)dy
\end{equation}
for any ball $B\subset {\mathbb{R}}^{d}$. The general theory of semigroup, in particlar Yosida's generating theorm \cite{yosida}, implies that $\mathcal{L}$ is the infinitesimal generator of semigroups, formally denoted by $T_{t}=e^{-t\mathcal{L}}$, that solves the diffusion problem
$$\left\{
\begin{aligned}
&\frac{\partial}{\partial t}u(x,t)={\mathcal{L}} u(x,t),\ &(x,t)\in \mathbb R^{d}\times [0,\infty);\\
&u(x,0)=f(x),\ & x\in\mathbb R^{d},
\end{aligned}
\right.$$
by  setting $u(x,t)=e^{-t\mathcal{L}}f(x)$.

In this paper, we study weighted compactness of commutators of some classical operators of harmonic analysis associated with Schr\"{o}dinger operators. In functional analysis, an important branch is the theory of compact operators. Let $L$ be a linear operator from a  Banach space $X$ to another Banach space $Y$. We call $L$ a compact operator if the image under $L$ of any bounded subset of $X$ is a relatively compact subset of $Y$. One of classical examples of compact operators is the compact imbedding of Sobolev spaces. By such imbedding, it can be converted an elliptic boundary value problem into a
Fredholm integral equation. For further information on compact operators, we refer the reader to Conway \cite{conway}, Folland-Stein \cite{folland} and Kutateladze \cite{kutateladze}.

In 1978, Uchiyama \cite{uchiyama} first studied the compactness of commutators of a singular integral operator with the kernel $\Omega\in\textrm{Lip}_{1}({\mathbb{S}}^{d-1})$ defined by
$$T_{\Omega}f(x)=\textrm{p.v.}\int_{\mathbb{R}^{d}}\frac{\Omega(y/|y|)}{|y|^{d}}f(x-y)dy.$$
He obtained that the commutator $[b,T_{\Omega}]$ is compact on $L^{p}(\mathbb{R}^{d}), 1<p<\infty$, if and only if $b\in \textrm{CMO}(\mathbb{R}^{d})$, where $\textrm{CMO}(\mathbb{R}^{d})$ denotes the closure of $\mathcal{C}_{c}^{\infty}(\mathbb{R}^{d})$ in the topology of $\textrm{BMO}(\mathbb{R}^{d})$. In 1984, Janson and Peetre \cite{janson} established the theory of paracommutators $T_{b}$ defined by
$$\widehat{(T_{b}^{s,t}f)}(\xi)=\frac{1}{2\pi}\int_{\mathbb{R}^{d}}\hat{b}(\xi-\eta)A(\xi,\eta)|\xi|^{s}|\eta|^{t}\hat{f}(\eta)d\eta.$$
Under some assumptions of $A(\cdot,\cdot)$, Janson and Peetre proved that if $b\in \textrm{CMO}(\mathbb{R}^{d})$, then $T_{b}^{0,0}$ is compact, see \cite[Theorems 13.2 and 13.3]{janson}. The commutators and higher commutators of convolution singular integrals are special cases of $T_{b}^{0,0}$. Then the result of \cite{janson} is a generalization of that in \cite{uchiyama}.

Since then, the study on the compactness of commutators of different operators has attracted
much more attention. For examples, the compactness of commutators of linear Fourier multipliers and pseudodifferential operators was considered by Cordes \cite{corder}. Peng \cite{peng} gave the the compactness of paracommutators $T_{b}$. Beatrous and Li \cite{beatrous} studied the boundedness and compactness of the commutators of Hankel type operators. Krantz and
Li \cite{krantz1,krantz2} applied the compactness characterization of the commutator $[b,T_{\Omega}]$ to study Hankel type operators on Bergman spaces. Wang \cite{wang1} showed that the commutators of fractional integral operators are compact from $L^{p}(\mathbb{R}^{d})$ to $L^{q}(\mathbb{R}^{d})$. In 2009, Chen and Ding \cite{chen1} proved that the commutator
of singular integrals with variable kernels is compact on $\mathbb{R}^{d}$ if and only if $b \in \textrm{CMO}(\mathbb{R}^{d})$. In \cite{chen1}, the authors also established the compactness of Littlewood-Paley square functions in \cite{chen2}. After that, Chen, Ding and Wang \cite{chen3} obtained the compactness of commutators for Marcinkiewicz integrals on Morrey spaces. Liu and Tang \cite{liu1} studied the compactness for higher order commutators of oscillatory singular integral operators. Li and Peng \cite{li-peng} investigated  compact commutators of Riesz transforms associated to Schr\"{o}dinger operators. Li, Mo and Zhang \cite{li2} established a compactness criterion with applications to the commutators associated with Schr\"{o}dinger operators. For more information about the compactness problems of commutators, see also \cite{benyi1,benyi2,benyi3,bu1,bu2,cao1,cao2,chaffee1,chaffee2,ding,duong,guo,hytonen1,hytonen2,hytonen3,tao,torres1,wang2,wu,xue1,xue2} and the references therein.

The study of Schr\"{o}dinger operator $\mathcal{L}=-\Delta+\mathit{V}$ recently attracted much attention, see \cite{bongioanni1,bongioanni2,bongioanni3,bongioanni4,dziubanski1,dziubanski2,guo1,shen,zhong}. In particular, Shen \cite{shen} considered  $L^{p}$ estimates for Schr\"{o}dinger operators $\mathcal{L}$ with certain potentials which include Schr\"{o}dinger Riesz transforms
$$R_{j}^{\mathcal{L}}=\frac{\partial}{\partial x_{j}}\mathcal{L}^{-{1}/{2}},\qquad j=1,\ldots,n.$$
 Shen also proved that the Schr\"{o}dinger type operators: $\nabla(-\Delta+\mathit{V})^{-1}\nabla$, $\nabla(-\Delta+\mathit{V})^{-1/2}$, $(-\Delta+\mathit{V})^{-1/2}\nabla$ with $\mathit{V}\in B_{d}$, and $(-\Delta+\mathit{V})^{i\gamma}$ with $\gamma\in \mathbb{R}$ and $\mathit{V}\in B_{d/2}$, are standard Calder\'{o}n-Zygmund operators.

Recently, Bongioanni, Harboure and Salinas \cite{bongioanni1} proved $L^{p}(\mathbb{R}^{d})$ $(1<p<\infty)$ boundedness for commutators of Riesz transforms associated with Schr\"odinger
operators with $\textrm{BMO}(\rho)$ functions which include the class of BMO functions. In  \cite{bongioanni2}, Bongioanni et al. established the weighted boundedness for Riesz transforms, fractional integrals and Littlewood-Paley functions associated to Schr\"{o}dinger operators with weight $A_{p}^{\rho}$ class which includes the Muckenhoupt weight class. Tang and his  collaborators \cite{tang1,tang2,tang3} have established weighted norm inequalities for some Schr\"{o}dinger type operators, which include commutators of Riesz transforms, fractional integrals, and Littlewood-Paley functions
related to Schr\"{o}dinger operators (see also \cite{bongioanni3,bongioanni4}).

Naturally,
it will be a very interesting problem to ask whether we can establish the weighted compactness of commutators of some Schr\"{o}dinger type operators with $\textrm{CMO}(\rho)$ functions and weight $A_{p}^{\rho}$ class. In this paper, we give a positive answer. To obtain the conclusion, we will utilize a new estimate of kernels and new weighted inequalities for new maximal
operators. It is worth pointing out that our method is applicable to  more general Schr\"{o}dinger type operators, and generalizes the results obtained in \cite{li-peng,li2}.  The paper is organized as follows. In Section \ref{sec2}, we give some notation and several basic results which will play a crucial role in the sequel. In Section \ref{sec3}, we establish the weighted compactness of commutators of Riesz transforms, standard Calder\'{o}n-Zygmund operators and Littlewood-Paley functions associated with Schr\"odinger operators.

 Throughout this paper, we let $C$ denote constants that are independent of the main parameters involved but whose value may differ from line to line. By $A\sim B$, we mean that there exists a constant $C>1$ such that $1/C\leq A/B\leq C$. By $A\lesssim B$, we mean that there exists a constant $C>0$ such that $ A\leq C B$.

\section{Some notation and basic results}\label{sec2}

We first recall some notation. Given $B=B(x,r)$ and $\lambda>0$, we will write $\lambda B$ for the $\lambda$-dilate ball, which is the ball centered at $x$ and with radius $\lambda r$. Given a Lebesgue measurable set $E$ and a weight $w$,
 the symbol $|E|$  denotes the Lebesgue measure of $E$, and
$$w(E):=\int_{E}wdx.$$
For $0<p<\infty$,
$$\|f\|_{L^{p}(w)}=\left(\int_{\mathbb{R}^{d}}|f(y)|^{p}w(y)dy\right)^{1/p}.$$

The following auxiliary
function $m_{\mathit{V}}(x)$ was first introduced by Shen \cite{shen} and is widely used in the research of Schr\"odinger operators:
$$\rho(x)=\frac{1}{m_{\mathit{V}}(x)}=\sup_{r>0}\left\{r:\,\frac{1}{r^{d-2}}\int_{B(x,r)}V(y)dy\leq 1\right\}.$$

Obviously, $0<m_{\mathit{V}}(x)<\infty$ if $\mathit{V}\neq 0$. In particular, $m_{\mathit{V}}(x)=1$ with $\mathit{V}=1$ and $m_{\mathit{V}}(x)\sim (1+|x|)$ with $\mathit{V}=|x|^{2}$.

For different $x$ and $y$, Shen \cite{shen} gave the following important inequality.
\begin{lemma}\label{m_{v} property_1}
{\rm (\cite[Lemmas 1.4 and 1.8]{shen})}	Assume that $\mathit{V}\in B_{q}$ for $q>d/2$.
\item{\rm (i)} There exist constants $k_{0}>0$, $C_{0}>1$ and $C>0$ such that
	\begin{equation}\label{V_property_1}
	\frac{1}{C_{0}}(1+|x-y|m_{\mathit{V}}(x))^{-k_{0}}\leq\frac{m_{\mathit{V}}(x)}{m_{\mathit{V}}(y)}\leq C_{0}(1+|x-y|m_{\mathit{V}}(x))^{k_{0}/(k_{0}+1)}.
	\end{equation}
	In particular, $m_{\mathit{V}}(x)\sim m_{\mathit{V}}(y)$ if $|x-y|\leq C/m_{\mathit{V}}(x)$.
	\item{\rm (ii)} For $0<r<R<\infty$,
	\begin{equation}\label{V_property_2}
	\frac{1}{r^{d-2}}\int_{B(x,r)}\mathit{V}(y)dy\leq C(R/r)^{d/q-2}\frac{1}{R^{d-2}}\int_{B(x,R)}\mathit{V}(y)dy.
	\end{equation}
\end{lemma}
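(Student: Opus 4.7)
The plan is to establish part (ii) first, since it is a clean one-shot computation, and then use it as the main tool for part (i).

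For part (ii), my approach is to combine Hölder's inequality on $B(x,r)$, the inclusion $B(x,r)\subset B(x,R)$, and the reverse Hölder hypothesis \eqref{reverse_holder} applied on the larger ball $B(x,R)$. Concretely,
\begin{align*}
\int_{B(x,r)}\mathit{V}
\leq |B(x,r)|^{1-1/q}\Big(\int_{B(x,r)}\mathit{V}^{q}\Big)^{1/q}
&\leq |B(x,r)|^{1-1/q}\Big(\int_{B(x,R)}\mathit{V}^{q}\Big)^{1/q}\\
&\leq C\,|B(x,r)|^{1-1/q}|B(x,R)|^{1/q-1}\int_{B(x,R)}\mathit{V}.
\end{align*}
Dividing both sides by $r^{d-2}$ and collecting the powers of $r$ and $R$ produces the announced $(R/r)^{d/q-2}$ factor. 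Since $q>d/2$, this exponent is negative, which is consistent with the intuition that the normalized quantity $R\mapsto R^{-(d-2)}\int_{B(x,R)}V$ is monotone up to constants.

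For part (i), my plan is to split on the size of $|x-y|m_{V}(x)$. In the \emph{local} regime $|x-y|\leq C/m_{V}(x)$, set $\rho(x)=1/m_{V}(x)$; then the balls $B(x,\rho(x))$ and $B(y,\rho(x))$ lie inside a common ball of radius comparable to $\rho(x)$. Applying part (ii) together with the doubling property of the measure $V\,dx$ (a standard consequence of \eqref{reverse_holder}) to transfer $V$-integrals between center $x$ and center $y$, and invoking the defining relation $\rho(x)^{-(d-2)}\int_{B(x,\rho(x))}V\sim 1$, forces $\rho(y)\sim\rho(x)$. This already gives the ``in particular'' statement.

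For the \emph{far} regime $|x-y|\gg 1/m_{V}(x)$, part (ii) rewritten as a growth estimate reads $R^{-(d-2)}\int_{B(x,R)}V\gtrsim (R/\rho(x))^{2-d/q}$ for $R\geq\rho(x)$. Combining this growth with the inclusions $B(x,R)\subset B(y,R+|x-y|)$ and doubling of $V\,dx$ yields a first crude comparison $\rho(y)\lesssim \rho(x)\bigl(1+|x-y|m_{V}(x)\bigr)$. To upgrade the linear exponent to the sublinear Shen exponent $k_{0}/(k_{0}+1)$, I would iterate the argument along a dyadic chain of intermediate points from $x$ to $y$ and solve the resulting recursive inequality; exchanging the roles of $x$ and $y$ in the sharpened upper bound then yields the matching lower bound $C_{0}^{-1}(1+|x-y|m_{V}(x))^{-k_{0}}$. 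The main technical obstacle is precisely this sharpening step: a single-scale comparison produces only a linear bound in $|x-y|m_{V}(x)$, and extracting the exponent $k_{0}/(k_{0}+1)$ requires the bootstrap/chaining argument that is the technical heart of Shen's proof in \cite{shen}.
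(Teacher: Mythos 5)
The paper does not prove this lemma at all; it is quoted directly from Shen's Lemmas 1.4 and 1.8 in \cite{shen}, so there is no ``paper's own proof'' against which to compare. On its own merits, your proof of part (ii) is correct and complete: H\"older's inequality on the small ball, the trivial inclusion $B(x,r)\subset B(x,R)$, and the reverse H\"older inequality on the large ball, with the exponent bookkeeping $|B(x,r)|^{1-1/q}|B(x,R)|^{1/q-1}/r^{d-2}\sim r^{2-d/q}R^{d/q-d}=(R/r)^{d/q-2}R^{2-d}$ working out exactly. This is the standard argument.

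Part (i), however, contains a genuine gap that you yourself flag. Your single-scale argument --- combining the growth estimate $R^{-(d-2)}\int_{B(x,R)}V\gtrsim(R/\rho(x))^{2-d/q}$ for $R\geq\rho(x)$ (from part (ii) and the defining relation $\rho(x)^{-(d-2)}\int_{B(x,\rho(x))}V\sim 1$) with ball inclusions and the doubling of $V\,dx$ --- does give the local comparability $m_V(x)\sim m_V(y)$ when $|x-y|\lesssim\rho(x)$, and a crude linear estimate $m_V(x)/m_V(y)\lesssim 1+|x-y|m_V(x)$ in the far regime. But \eqref{V_property_1} asserts the strictly sublinear exponent $k_0/(k_0+1)<1$, and upgrading linear to sublinear is not cosmetic: it is precisely the technical content of Shen's Lemma 1.4, which rests on an auxiliary doubling-rate estimate for $r\mapsto\int_{B(x,r)}V$ and a self-improving recursion. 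You correctly identify that some bootstrap is needed, and you correctly note that once the sublinear upper bound is in hand, swapping $x$ and $y$ and using $A\leq C(1+tA)^{k_0/(k_0+1)}\Rightarrow A\lesssim t^{k_0}$ (for $tA\geq 1$) produces the lower bound with the matching exponent $-k_0$. But you do not carry out the recursion, so the two-sided estimate with the paired exponents $(k_0/(k_0+1),-k_0)$ remains unproved; as written, part (i) is a plausible outline with its hardest step left open.
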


By $0<m_{\mathit{V}}(x)<\infty$ and $\eqref{V_property_2}$, Guo et al. \cite{guo1} got the following result.
\begin{lemma}\label{estimate_V}
{\rm (\cite[Lemma 1]{guo1})}	Suppose that $\mathit{V}\in B_{q}$ for some $q>d/2$ and let $K>\log_{2}C_{0}+1$, where $C_{0}$ is the constant in $\eqref{V_property_1}$. Then for any $x\in \mathbb{R}^{d}$ and $R>0$, we have
	\begin{equation}\label{V_property_3}
	\frac{1}{(1+m_{\mathit{V}}(x)R)^{K}}\int_{B(x,R)}\mathit{V}(y)dy\leq C R^{d-2}.
	\end{equation}
\end{lemma}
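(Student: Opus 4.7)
The plan is to split into two regimes according to whether $R$ lies below or above $\rho(x) = 1/m_V(x)$.

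When $R \leq \rho(x)$, the factor $(1 + m_V(x) R)^K$ is between $1$ and $2^K$, so it is enough to prove $\int_{B(x,R)} V(y)\,dy \lesssim R^{d-2}$. I would apply (\ref{V_property_2}) with inner radius $R$ and outer radius $\rho(x)$: since $q > d/2$ gives $d/q - 2 < 0$, the factor $(\rho(x)/R)^{d/q-2}$ is at most $1$, and combined with the defining inequality $\frac{1}{\rho(x)^{d-2}}\int_{B(x,\rho(x))} V \leq 1$, the bound follows directly.

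When $R > \rho(x)$, I would perform a dyadic decomposition. Pick $k \in \mathbb{N}$ with $2^{k-1}\rho(x) < R \leq 2^k \rho(x)$, so that $1 + m_V(x) R \sim 2^k$, and split
\[
\int_{B(x,R)} V \leq \int_{B(x,\rho(x))} V + \sum_{j=0}^{k-1} \int_{A_j} V, \qquad A_j = B(x, 2^{j+1}\rho(x)) \setminus B(x, 2^j \rho(x)).
\]
The first term is controlled by the previous case. On each annulus I would select a bounded-overlap Vitali/Besicovitch covering by balls $\{B(y_\ell, \rho(y_\ell))\}_\ell$ with centers $y_\ell \in A_j$; the definition of $\rho$ gives $\int_{B(y_\ell, \rho(y_\ell))} V \leq \rho(y_\ell)^{d-2}$. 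Lemma \ref{m_{v} property_1}(i) applied to the pairs $(x, y_\ell)$ with $|x-y_\ell| m_V(x) \sim 2^j$ provides two-sided comparability between $\rho(y_\ell)$ and $\rho(x)$ with a polynomial loss in $1 + 2^j$; combining this with the bounded-overlap bound $\sum_\ell \rho(y_\ell)^d \lesssim (2^j \rho(x))^d$ yields $\int_{A_j} V \lesssim \rho(x)^{d-2} \, 2^{j\alpha}$ for some exponent $\alpha$. Summing the geometric series in $j$ and rewriting $\rho(x)^{d-2} = R^{d-2}(m_V(x) R)^{-(d-2)}$ gives $\int_{B(x,R)} V \lesssim R^{d-2}(1+m_V(x)R)^{\alpha - (d-2)}$, which is the claimed inequality for $K$ taken large enough.

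The main obstacle is achieving the sharp threshold $K > \log_2 C_0 + 1$ stated in the lemma. A one-shot covering as sketched above produces an exponent controlled by $k_0$ and $d$ rather than by $\log_2 C_0$. To recover the announced $K$, one should replace the single covering by a doubling iteration: prove a recursion of the form $I(2R) \leq 2 C_0 \, I(R) + C R^{d-2}$ for $R \geq \rho(x)$, where $I(R) = \int_{B(x,R)} V$, with the prefactor $2C_0$ traced to one application of (\ref{V_property_1}). Iterating $k \sim \log_2(m_V(x)R)$ times then contributes the growth $(2C_0)^k = (m_V(x)R)^{1 + \log_2 C_0}$, which is exactly what forces the threshold $K > \log_2 C_0 + 1$; the residual source terms $C R^{d-2}$ absorb into the polynomial factor because $d \geq 3$.
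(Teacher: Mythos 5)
The paper does not prove this lemma itself; it is imported by citation from Guo--Li--Peng \cite{guo1}, so there is no in-paper argument to compare against, and I judge your proposal on its own merits.

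Your case $R\leq\rho(x)$ is complete and correct: since $d/q-2<0$, the factor $(\rho(x)/R)^{d/q-2}$ in \eqref{V_property_2} is at most $1$, and combining with $\rho(x)^{-(d-2)}\int_{B(x,\rho(x))}V\leq 1$ gives $\int_{B(x,R)}V\lesssim R^{d-2}$, while $(1+m_V(x)R)^{-K}$ is comparable to $1$.

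Your case $R>\rho(x)$ has a genuine gap. The Vitali covering of dyadic annuli combined with Lemma~\ref{m_{v} property_1}\,(i) is a sound argument and does yield a bound $\int_{B(x,R)}V\lesssim R^{d-2}(1+m_V(x)R)^{K'}$, but with an exponent $K'$ controlled by $k_0$ and $d$; as you note yourself, this does not reach the stated $K>\log_2 C_0+1$. The proposed repair --- a recursion $I(2R)\leq 2C_0\,I(R)+CR^{d-2}$ for $R\geq\rho(x)$, ``traced to one application of \eqref{V_property_1}'' --- is asserted rather than proved, and the attribution is not credible: \eqref{V_property_1} is a pointwise comparison of the auxiliary function $m_V$ at two points and carries no direct information about how $\int_{B(x,2R)}V$ compares to $\int_{B(x,R)}V$. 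What actually drives a recursion of this type is the doubling property of the measure $V\,dx$ (a standard consequence of $V\in B_q\subset A_\infty$, Shen's Lemma~1.2, which is not among the facts quoted in this paper), and the base of the geometric growth is then the doubling constant of $V\,dx$, which is not literally the $C_0$ of \eqref{V_property_1}. As written, the recursion is an unsupported claim, so the second case is not established at the stated threshold. Two smaller remarks: iterating your recursion from $I(\rho(x))=\rho(x)^{d-2}$ and rewriting $\rho(x)^{d-2}=R^{d-2}(m_V(x)R)^{-(d-2)}$ gives the threshold $K>1+\log_2 C_0-(d-2)$, not $K>1+\log_2 C_0$, so your final calculation drops a rescaling factor; and since the lemma is invoked later only with some freely chosen finite $K$, the weaker $k_0$-dependent version that your covering argument does prove would in fact suffice for every subsequent use in the paper, even though it does not prove the lemma as stated.
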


For a number $\theta>0$ and a ball $B=B(x_{0},r)$ with center at $x_{0}$ and radius $r$, we denote $\Psi_{\theta}(B)=(1+r/\rho(x_{0}))^{\theta}$.

A weight will always mean a nonnegative locally integrable function. As in \cite{bongioanni2}, we say that a weight $w$ belongs to the class $A_{p}^{\rho,\theta}, 1<p<\infty$, if there is a constant $C$ such that for all balls $B=B(x,r)$,
$$\left(\frac{1}{\Psi_{\theta}(B)|B|}\int_{B}w(y)dy\right)\left(\frac{1}{\Psi_{\theta}(B)|B|}\int_{B}w^{-{1}/{(p-1)}}(y)dy\right)^{p-1}\leq C.$$
We also say that a nonnegative function $w$ satisfies the $A_{1}^{\rho,\theta}$ condition if there exists a constant $C$ such that for all balls $B$,
$$M_{\mathit{V}}^{\theta}(w)(x)\leq Cw(x)\quad \,\,\text{a.e.}\,\,x\in\mathbb{R}^{d},$$
where
$$M_{\mathit{V}}^{\theta}f(x)=\sup_{B\ni x}\frac{1}{\Psi_{\theta}|B|}\int_{B}|f(y)|dy.$$
Since $\Psi_{\theta}(B)\geq 1$, obviously, $A_{p}\subset A_{p}^{\rho,\theta}$ for $1<p<\infty$, where $A_{p}$ denote the classical Muckenhoupt weights (\cite{garcia-cuerva,muckenhoupt}).

Since
$$\Psi_{\theta}(B)\leq\Psi_{\theta}(2B)\leq 2^{\theta}\Psi_{\theta}(B),$$
we remark that balls can be replaced by cubes in the definitions of $A_{p}^{\rho,\theta}$ for $p\geq 1$ and $M_{\mathit{V}}^{\theta}$,
For convenience, in the rest of this paper, for fixed $\theta>0$,  we use the notation $\Psi(B)$ and $A_{p}^{\rho}$ instead of $\Psi_{\theta}(B)$ and $A_{p}^{\rho,\theta}$, respectively.

The next lemma follows from the definition of $A_{p}^{\rho}$ $(1\leq p<\infty)$:
\begin{lemma}\label{A_p property}
{\rm (\cite{tang1})}	Let $1<p<\infty$. Then the following assertions hold.
		\item{\rm(i)} If $1<p_{1}<p_{2}<\infty$, then $A_{p_{1}}^{\rho}\subset A_{p_{2}}^{\rho}$.
		\item{\rm(ii)} $w\in A_{p}^{\rho}$ if and only if $w^{-{1}/{(p-1)}}\in A_{p^{\prime}}^{\rho}$, where $1/p+1/p^{\prime}=1$.
		\item{\rm (iii)} If $w\in A_{p}^{\rho}$ for $1\leq p<\infty$, then
		$$\frac{1}{\Psi(Q)|Q|}\int_{Q}|f(y)|dy\leq C\left(\frac{1}{w(5Q)}\int_{Q}|f|^{p}w(y)dy\right)^{1/p},$$
		where
		$w(E)=\int_{E}w(x)dx.$
		In particular, letting $f=\chi_{E}$ for any measurable set $E\subset Q$, we have
		\begin{equation}\label{A_p property1}
		\frac{|E|}{\Psi(Q)|Q|}\leq C\left(\frac{w(E)}{w(5Q)}\right)^{1/p}.
		\end{equation}

\end{lemma}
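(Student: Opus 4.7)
The overall strategy is to carry out the classical $A_p$-type arguments while carefully tracking the $\Psi(B)$ factors. Two elementary facts will be used repeatedly: (a) $\Psi(B)\geq 1$, so inserting positive powers of $1/\Psi(B)$ into an upper bound never makes it worse; and (b) $\Psi(5B)\leq C\Psi(B)$, obtained from the inequality $\Psi(2B)\leq 2^{\theta}\Psi(B)$ stated above by iterating a bounded number of times. With these observations in hand, parts (i) and (ii) are routine.

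For (i), I would set $\alpha=(p_1-1)/(p_2-1)\in(0,1)$ and write $w^{-1/(p_2-1)}=\bigl(w^{-1/(p_1-1)}\bigr)^{\alpha}$. Applying H\"older's inequality with exponent $1/\alpha$ to the averaged integral $\frac{1}{\Psi(B)|B|}\int_B w^{-1/(p_2-1)}\,dy$ produces a leftover factor of the form $\Psi(B)^{-(1-\alpha)}\leq 1$, which I discard via fact (a); raising to the power $p_2-1$ and multiplying by $\frac{1}{\Psi(B)|B|}\int_B w\,dy$ then converts the $A_{p_1}^{\rho}$ condition into the $A_{p_2}^{\rho}$ condition. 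For (ii), I would raise the $A_p^{\rho}$ inequality for $w$ to the $1/(p-1)$ power, substitute $\sigma:=w^{-1/(p-1)}$, and use $\sigma^{-1/(p'-1)}=w$ together with $p'-1=1/(p-1)$ to read off the $A_{p'}^{\rho}$ condition for $\sigma$.

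Part (iii) is the main content. I would first apply H\"older's inequality on $Q$ with exponents $p$ and $p'$:
$$\int_Q|f(y)|\,dy\leq\left(\int_Q|f(y)|^{p}w(y)\,dy\right)^{1/p}\left(\int_Q w(y)^{-1/(p-1)}\,dy\right)^{1/p'},$$
reducing the task to the purely weighted estimate
$$\frac{1}{\Psi(Q)|Q|}\left(\int_Q w^{-1/(p-1)}\right)^{1/p'}\leq\frac{C}{w(5Q)^{1/p}}.$$
A naive attempt to apply the $A_p^{\rho}$ condition on $Q$ itself and then to interchange $w(Q)$ with $w(5Q)$ fails because $A_p^{\rho}$ weights are \emph{not} required to be doubling; this is precisely why the dilate $5Q$ appears on the right-hand side of the statement. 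The correct move is to apply the $A_p^{\rho}$ condition directly on $5Q$, and then use the trivial monotonicity $\int_Q w^{-1/(p-1)}\leq\int_{5Q}w^{-1/(p-1)}$ together with $\Psi(5Q)|5Q|\sim\Psi(Q)|Q|$ (from fact (b) above and $|5Q|=5^{d}|Q|$) to rearrange. Identifying this $5Q$-workaround for the non-doubling feature is the only real conceptual obstacle; once in place, everything collapses by algebra. The specialization to $f=\chi_E$ then yields \eqref{A_p property1} immediately.
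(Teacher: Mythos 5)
The paper cites this lemma directly from Tang's work without supplying its own proof, so there is no internal argument to compare against; on the merits, your argument for parts (i) and (ii), and for part (iii) when $1<p<\infty$, is correct and tracks the $\Psi$ factors properly. In particular your key move in (iii) --- applying the $A_p^{\rho}$ condition on $5Q$ rather than on $Q$ so as to avoid any doubling hypothesis, then using $\int_Q w^{-1/(p-1)}\leq\int_{5Q}w^{-1/(p-1)}$ together with $\Psi(5Q)\,|5Q|\leq 5^{d+\theta}\,\Psi(Q)\,|Q|$ --- is exactly what is needed, and your diagnosis of why applying the condition on $Q$ alone fails is accurate.

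The one genuine gap is that part (iii) is stated for $1\leq p<\infty$, whereas your H\"older step with conjugate exponents $p$ and $p'$ degenerates at $p=1$. The endpoint needs a short separate argument from the $A_1^{\rho}$ condition: for a.e.\ $x\in 5Q\supset Q$ one has
$$\frac{1}{\Psi(5Q)\,|5Q|}\int_{5Q}w\leq M_{V}^{\theta}w(x)\leq C\,w(x),$$
hence $w(x)\gtrsim w(5Q)/(\Psi(5Q)\,|5Q|)$ a.e.\ on $Q$, which gives $\int_Q|f|\,w\gtrsim \frac{w(5Q)}{\Psi(5Q)\,|5Q|}\int_Q|f|$; combined with $\Psi(5Q)\,|5Q|\lesssim\Psi(Q)\,|Q|$ this yields the stated inequality for $p=1$. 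You should add this case to cover the full range claimed.
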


In \cite{bongioanni1}, Bongioanni et al. introduced a new space $\textrm{BMO}(\rho)$ defined by
$$
\|f\|_{\textrm{BMO}(\rho)}=\sup_{B\subset\mathbb{R}^{d}}\frac{1}{\Psi(B)|B|}\int_{B}|f(x)-f_{B}|dx<\infty,
$$
where $f_{B}=\frac{1}{|B|}\int_{B}f(y)dy$, $\Psi(B)=(1+r/\rho(x_{0}))^{\theta}$, $B=B(x_{0},r)$ and $\theta>0$.
We denote by $\textrm{CMO}(\rho)$  the closure of $\mathcal{C}_{c}^{\infty}$ in the  topology of $\textrm{BMO}(\rho)$, where $\mathcal{C}_{c}^{\infty}$ is the set of all smooth functions on $\mathbb{R}^{d}$ with compact supports.

To prove the weighted boundedness for the area functions related with Schr\"{o}dinger operators, Tang et al. \cite{tang3} consider the following variant of maximal operator $M_{V,\eta}, 0<\eta<\infty$, defined as
$$M_{V,\eta}f(x):=\sup_{B\ni x}\frac{1}{(\Psi(B))^{\eta}|B|}\int_{B}|f(y)|dy.$$
One of the main results obtained in \cite{tang3} is the weighted $L^{p}$-boundedness of $M_{V,\eta}, 0<\eta<\infty$. Precisely,
\begin{lemma}\label{maximal_bound}
	Let $1<p<\infty$ and $p^{\prime}=p/(p-1)$. If $w\in A_{p}^{\rho}$, then there exists a constant $C>0$ such that
	$$
	\|M_{V,p^{\prime}}\|_{L^{p}(w)}\leq C\|f\|_{L^{p}(w)}.
	$$
\end{lemma}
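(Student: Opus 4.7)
The plan is to reduce the weighted boundedness of $M_{V,p'}$ to that of a standard weighted Hardy--Littlewood maximal operator by fully exploiting the extra decay factor $(\Psi(B))^{-p'}$ built into $M_{V,p'}$. The key observation is that the $A_p^{\rho}$ condition, via Lemma~\ref{A_p property}(iii), already absorbs one power of $\Psi(B)$ into a weighted $L^{p}$ mean, so the remaining $p'-1>0$ powers of $(\Psi(B))^{-1}$ are a genuine surplus that can be converted into integrability.

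As the first step, I would apply Lemma~\ref{A_p property}(iii) to obtain, for every ball $B$ containing $x$,
\[
\frac{1}{(\Psi(B))^{p'}|B|}\int_{B}|f(y)|\,dy \leq \frac{C}{(\Psi(B))^{p'-1}}\left(\frac{1}{w(5B)}\int_{B}|f|^{p}w\right)^{1/p}.
\]
To convert the leftover decay $(\Psi(B))^{-(p'-1)}$ into an $L^{p/p_{0}}(w)$-type estimate, I would then invoke the self-improvement of $A_p^{\rho}$ established in \cite{bongioanni2}: there is $\epsilon=\epsilon(w)>0$ with $w\in A_{p_{0}}^{\rho}$, where $p_{0}:=p-\epsilon\in(1,p)$. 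Re-running Lemma~\ref{A_p property}(iii) with $p_{0}$ in place of $p$, and estimating the surplus $(\Psi(B))^{-(p'-1)}\leq 1$ trivially, I arrive at the pointwise dominance
\[
M_{V,p'}f(x)\leq C\bigl(\mathcal{M}_{w}(|f|^{p_{0}})(x)\bigr)^{1/p_{0}},
\]
where $\mathcal{M}_{w}g(x):=\sup_{B\ni x}\frac{1}{w(5B)}\int_{B}|g|\,w$ is the centered weighted maximal operator relative to the measure $w\,dx$.

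A standard Vitali covering argument (relying only on the doubling of Lebesgue measure, not of $w$) gives that $\mathcal{M}_{w}$ is of weak type $(1,1)$ with respect to $w\,dx$, and Marcinkiewicz interpolation with the trivial $L^{\infty}\to L^{\infty}$ bound then yields strong $(q,q)$ boundedness for every $q>1$. Applying this with $q=p/p_{0}>1$ gives
\[
\|M_{V,p'}f\|_{L^{p}(w)}^{p} \leq C\int_{\mathbb{R}^{d}}\mathcal{M}_{w}(|f|^{p_{0}})^{p/p_{0}}w(x)\,dx \leq C\int_{\mathbb{R}^{d}}|f(x)|^{p}w(x)\,dx,
\]
which is the desired inequality.

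The hard part will be the self-improvement step for $A_p^{\rho}$: the factor $\Psi(B)$ destroys the scale-invariance underlying the classical reverse H\"older inequality for $A_{p}$, so one cannot simply quote it. The argument has to split balls into those with $r\lesssim\rho(x_{B})$, on which $w$ behaves like a genuine $A_{p}$ weight and the classical reverse H\"older applies, and those with $r\gg\rho(x_{B})$, where the $\Psi(B)$ factor must be absorbed via the critical-ball covering of $\mathbb{R}^{d}$ associated with $\rho$. Once this self-improvement is granted, the remainder of the proof is essentially routine maximal-function theory.
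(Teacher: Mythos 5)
The paper does not prove this lemma — it cites it as "one of the main results obtained in \cite{tang3}" (Tang--Wang--Zhu) — so there is no in-paper proof to compare against; I can only assess your argument on its own merits, and I believe it has a genuine gap.

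The entire weight of your proof rests on the step "invoke the self-improvement of $A_p^{\rho}$ ... there is $\epsilon>0$ with $w\in A_{p_0}^{\rho}$, $p_0=p-\epsilon$," followed by re-applying Lemma \ref{A_p property}(iii) at level $p_0$ \emph{with the same $\Psi$} and discarding the surplus $(\Psi(B))^{-(p'-1)}\leq 1$. Recall that in this paper $A_p^{\rho}$ means $A_p^{\rho,\theta}$ for a \emph{fixed} $\theta$, and the $\Psi$ appearing in both the weight class and in $M_{V,\eta}$ carries that same $\theta$. An openness statement of the form $w\in A_p^{\rho,\theta}\Rightarrow w\in A_{p-\epsilon}^{\rho,\theta}$ with the \emph{same} $\theta$ is not in \cite{bongioanni2}; what one can extract there is a reverse-H\"older inequality valid only on balls of radius $\lesssim\rho(x_B)$, and any global self-improvement one cooks up from it forces $\theta$ to grow to some $\theta'>\theta$. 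Once $\theta$ grows, Lemma \ref{A_p property}(iii) at level $p_0$ reads $\frac{1}{\Psi_{\theta'}(B)|B|}\int_B|f| \leq C(\mathcal{M}_w(|f|^{p_0})(x))^{1/p_0}$, and converting back to $\Psi_\theta$ produces the factor $\Psi_\theta(B)^{\theta'/\theta-p'}$, which is uniformly bounded only if $\theta'\leq p'\theta$. That is exactly what the extra $(\Psi(B))^{-(p'-1)}$ decay in $M_{V,p'}$ is there to absorb; "estimating it trivially by $1$" throws away the one ingredient that makes the $\eta=p'$ version work.

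Indeed, if your argument (with the same-$\theta$ self-improvement and discarded surplus) were correct, it would prove that the simpler operator $M_{V,1}=M_V^\theta$ is already bounded on $L^p(w)$ for every $w\in A_p^\rho$ — but the Remark immediately after the lemma explicitly states that for $\eta=1$ one only gets the result under the \emph{strictly stronger} assumption $w\in A_{p_0}^\rho$ with $p_0<p$. So the surplus power of $\Psi$ must play an active role; it cannot be dropped. Your Step 3 (the Vitali-covering weak-$(1,1)$ bound and Marcinkiewicz interpolation for $\mathcal{M}_w$, which as you point out needs no doubling) is fine, and your Step 1 is fine; the gap is in the self-improvement step and the incorrect disposal of the $(\Psi(B))^{-(p'-1)}$ factor. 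To repair the argument you must (a) state and prove a precise $A_{p_0}^{\rho,\theta'}$ self-improvement with a quantitative control $\theta'\leq p'\theta$, and (b) keep the surplus $(\Psi(B))^{-(p'-1)}$ to cancel the $\Psi_\theta(B)^{\theta'/\theta-1}$ that this control produces.
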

\begin{remark}
	If $\eta=1$, Lemma $\ref{maximal_bound}$ holds for $1<p_{0}<p<\infty$.
\end{remark}

There exists many operators related with $-\Delta+V$ are standard Calder\'{o}n-Zygmund operators (\cite{shen}), for instance,
$$\begin{cases}
&\nabla(-\Delta+\mathit{V})^{-1}\nabla,\ \mathit{V}\in B_{n},\\
&\nabla(-\Delta+\mathit{V})^{-1/2},\ \mathit{V}\in B_{n},\\
&(-\Delta+\mathit{V})^{-1/2}\nabla,\ \mathit{V}\in B_{n},\\
&(-\Delta+\mathit{V})^{i\gamma},\ \gamma\in\mathbb{R}\ \&\ \mathit{V}\in B_{n/2},
\end{cases}$$
 and $\nabla^{2}(-\Delta+\mathit{V})^{-1}\nabla$ with $\mathit{V}$ being a nonnegative polynomial. In particular, the kernels $K$ of the operators mentioned above all satisfy the following conditions: for some $\delta_{0}>0$ and any $l\in\mathbb{N}_{0}=\mathbb{N}\bigcup\{0\}$, there exists a constant $C_{l}$ such that
\begin{align}\label{kernel_T1}
|K(x,y)|\leq\frac{C_{l}}{(1+|x-y|(m_{\mathit{V}}(x)+m_{\mathit{V}}(y)))^{l}}\frac{1}{|x-y|^{d}}
\end{align}
and
\begin{align}\label{kernel_T2}
&|K(x+h,y)-K(x,y)|+|K(x,y+h)-K(x,y)|\nonumber\\ &\qquad\leq\frac{C_{l}}{(1+|x-y|(m_{\mathit{V}}(x)+m_{\mathit{V}}(y)))^{l}}\frac{|h|^{\delta_{0}}}{|x-y|^{d+\delta_{0}}}
\end{align}
whenever $x,y,h\in\mathbb{R}^{d}$ and $|h|<|x-y|/2$.

Next we give a result of maximal Calder\'{o}n-Zygmund operators associated with Schr\"{o}dinger type.
\begin{lemma}\label{maximal_T}
Let $1<p<\infty$. If $w\in A_{p}^{\rho}$, then there exists a constant $C>0$ such that
$$\|T^{\ast}f\|_{L^{p}(w)}\leq C\|f\|_{L^{p}(w)},$$
where the maximal operator $T^{\ast}$ is defined by
$$T^{\ast}f(x):=\sup_{\epsilon>0}|T_{\epsilon}f(x)|=\sup_{\epsilon>0}\left|\int_{|y-x|>\epsilon}K(x,y)f(y)dy\right|.$$
\end{lemma}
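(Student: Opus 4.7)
The plan is to reduce the boundedness of $T^{\ast}$ to that of $T$ itself by establishing a Cotlar-type pointwise inequality
\[
T^{\ast}f(x) \lesssim M_{V,p^{\prime}}(Tf)(x) + M_{V,p^{\prime}}f(x),
\]
from which two applications of Lemma~\ref{maximal_bound}, combined with the $L^{p}(w)$-boundedness of $T$ itself (known for standard Calder\'{o}n--Zygmund operators with kernel satisfying \eqref{kernel_T1}--\eqref{kernel_T2} from the Schr\"{o}dinger-weighted theory of Bongioanni--Harboure--Salinas and Tang et al.~cited in the introduction) and the weak-$(1,1)$ behavior of $T$, will close the proof. The key device throughout is the identification $(1+|x-y|m_{V}(x))^{-l} = \Psi(B(x,|x-y|))^{-l/\theta}$, which converts the extra kernel decay into $\Psi$-gain sufficient to produce the $\Psi$-weighted maximal $M_{V,p^{\prime}}$ on the right-hand side, rather than the classical Hardy--Littlewood maximal that arises in the usual Cotlar argument.

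Fix $x \in \mathbb{R}^{d}$ and $\epsilon>0$, and consider two regimes. In the large-scale regime $\epsilon > \rho(x)/2$, write $T_{\epsilon}f(x)$ as an absolutely convergent integral and dyadically decompose the complement of $B(x,\epsilon)$ into annuli $\{2^{k}\epsilon < |y-x| \leq 2^{k+1}\epsilon\}_{k\geq 0}$. The size bound \eqref{kernel_T1}, together with the pointwise domination $(1/|B(x,2^{k+1}\epsilon)|)\int|f| \leq \Psi(B(x,2^{k+1}\epsilon))^{p^{\prime}}M_{V,p^{\prime}}f(x)$, yields
\[
|T_{\epsilon}f(x)| \lesssim M_{V,p^{\prime}}f(x)\sum_{k\geq 0}\Psi(B(x,2^{k+1}\epsilon))^{p^{\prime}-l/\theta}.
\]
Because $\epsilon > \rho(x)/2$ forces $\Psi(B(x,2^{k}\epsilon)) \gtrsim 2^{k\theta}$, this geometric series converges once $l$ is taken with $l/\theta > p^{\prime}$, so $|T_{\epsilon}f(x)| \lesssim M_{V,p^{\prime}}f(x)$ in this regime.

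In the small-scale regime $\epsilon \leq \rho(x)/2$ I run the usual Cotlar argument: decompose $f = f_{1}+f_{2}$ with $f_{1}=f\chi_{B(x,\epsilon)}$, so that $T_{\epsilon}f(x)=Tf_{2}(x)$. For every $z \in B(x,\epsilon/2)$ the smoothness bound \eqref{kernel_T2} with $h=x-z$, applied to the same annular decomposition (which now benefits from the extra factor $|h|^{\delta_{0}}/|x-y|^{\delta_{0}} \lesssim 2^{-k\delta_{0}}$), gives $|Tf_{2}(x)-Tf_{2}(z)| \lesssim M_{V,p^{\prime}}f(x)$, and hence
\[
|T_{\epsilon}f(x)| \leq |Tf(z)|+|Tf_{1}(z)|+CM_{V,p^{\prime}}f(x).
\]
I then average the $s$-th power of this inequality over $z \in B := B(x,\epsilon/2)$ for some fixed $0<s<1$: Jensen's inequality controls $(1/|B|)\int_{B}|Tf|^{s}$ by $(1/|B|)\int_{B}|Tf|$, and Kolmogorov's inequality combined with the weak-$(1,1)$ boundedness of the Calder\'{o}n--Zygmund operator $T$ controls $((1/|B|)\int_{B}|Tf_{1}|^{s})^{1/s}$ by $\|f_{1}\|_{1}/|B| \sim (1/|B(x,\epsilon)|)\int_{B(x,\epsilon)}|f|$. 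Because $\Psi(B(x,\epsilon)) \sim 1$ in this regime, these small-ball averages of $|Tf|$ and $|f|$ are dominated, up to bounded $\Psi$-factors, by $M_{V,p^{\prime}}(Tf)(x)$ and $M_{V,p^{\prime}}f(x)$ respectively, yielding the pointwise inequality on the small-scale regime as well.

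The principal technical obstacle is the bookkeeping in the dyadic annular sums: the decay exponent $l$ in \eqref{kernel_T1}--\eqref{kernel_T2} must be taken large enough (depending on $p^{\prime}$, $\theta$ and $\delta_{0}$) for every series encountered to converge uniformly in $x$ and $\epsilon$. Because \eqref{kernel_T1}--\eqref{kernel_T2} are asserted for every $l \in \mathbb{N}_{0}$, the required $l$ is freely available; Lemma~\ref{maximal_bound} applied to both $Tf$ and $f$, together with the cited $L^{p}(w)$-boundedness of $T$, then absorbs the right-hand side into $\|f\|_{L^{p}(w)}$ and completes the proof.
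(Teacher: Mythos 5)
Your proposal is correct and, in fact, more self-contained than the paper's treatment: the paper disposes of Lemma~\ref{maximal_T} in one sentence by appealing to a pointwise/norm control of $T^{\ast}$ by $M_{V,\eta}$ proved in the cited reference \cite{tang1}, and then invokes Lemma~\ref{maximal_bound}. You instead derive a Cotlar-type inequality
\[
T^{\ast}f(x) \lesssim M_{V,p^{\prime}}(Tf)(x)+M_{V,p^{\prime}}f(x)
\]
from scratch and close the argument with Lemma~\ref{maximal_bound} and the known $L^{p}(w)$-boundedness of $T$. The structure you propose is the same one the paper actually uses for the analogous Theorem~\ref{weighted_maximal_Ti}, so it is faithful to the spirit of the paper.

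Two features of your argument are worth flagging as genuine improvements rather than mere elaboration. First, in the small-scale regime you replace the paper's use of the unweighted $L^{q^{\prime}}$ boundedness of $T_1$ by Kolmogorov's inequality together with the weak-$(1,1)$ bound for $T$; this is what allows the resulting term to be $M_{V,p^{\prime}}f(x)$ rather than $\big(M_{V,p^{\prime}}(|f|^{q^{\prime}})\big)^{1/q^{\prime}}$, and hence what gives the full range $1<p<\infty$ asserted in the statement. Second, your explicit split into the regimes $\epsilon>\rho(x)/2$ and $\epsilon\leq\rho(x)/2$ is not an optional refinement: the bound $\frac{1}{|B|}\int_{B}|g| \lesssim M_{V,\eta}g(x)$ is only available (up to absolute constants) when $\Psi(B)\sim1$, i.e.\ when the radius of $B$ is comparable to or smaller than the critical radius. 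Your large-scale estimate handles the complementary range directly using only the size bound \eqref{kernel_T1} and the extra $(1+|x-y|m_{V}(x))^{-l}$ decay, which is exactly what is needed to make the dyadic series converge without any $\Psi$-gain left over. The bookkeeping is correct provided $l>\theta p^{\prime}$, which is available since \eqref{kernel_T1}--\eqref{kernel_T2} hold for every $l$.

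Two small caveats: you should state explicitly that the weak-$(1,1)$ bound for $T$ follows from its $L^{2}$-boundedness together with \eqref{kernel_T1}--\eqref{kernel_T2} via standard Calder\'on--Zygmund theory (the paper never records this), and when you write ``Jensen's inequality controls $(1/|B|)\int_{B}|Tf|^{s}$ by $(1/|B|)\int_{B}|Tf|$'' it should be the $1/s$-th root of the left side that is controlled by the right side; this is what you use, so it is a wording slip rather than a gap.
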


We remark that the maximal operator can be controlled by $M_{\mathit{V},\eta}$ in $L^{p}(w)$, and it is proved in \cite{tang1}. Thus, using Lemma $\ref{maximal_bound}$, it implies that Lemma $\ref{maximal_T}$ holds.

The following result about the weighted $L^{p}$ boundedness of commutator $[b,T]$which can be found in \cite{tang1}.
\begin{lemma}\label{weighted_commutator_T}
	Let $1<p<\infty$, $b\in{{\rm BMO}}(\rho)$ and $w\in A_{p}^{\rho}$. Then there exists a  constant $C_{p}>0$ such that
	$$\|[b,T]\|_{L^{p}(w)}\leq C_{p}\|b\|_{\textrm{\rm BMO}(\rho)}\|f\|_{L^{p}(w)}.$$
\end{lemma}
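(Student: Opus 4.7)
The strategy I would follow is the Fefferman--Stein sharp-maximal-function approach, adapted to the Schr\"odinger setting. Introduce a $\rho$-localized sharp maximal function
$$M^{\#,\rho}_\eta f(x) \;=\; \sup_{B \ni x} \frac{1}{\Psi(B)^\eta |B|} \int_B |f(y) - f_B|\,dy,$$
for which a weighted Fefferman--Stein inequality $\|g\|_{L^p(w)} \lesssim \|M^{\#,\rho}_\eta g\|_{L^p(w)}$ is available for $w \in A_p^\rho$, as part of the Bongioanni--Harboure--Salinas / Tang framework. With this in hand, the lemma reduces to the pointwise estimate
$$M^{\#,\rho}_\eta\bigl([b,T]f\bigr)(x) \;\lesssim\; \|b\|_{\textrm{BMO}(\rho)}\Bigl(M_{V,\eta_1}(|Tf|^{s})(x)^{1/s} \;+\; M_{V,\eta_2}(|f|^r)(x)^{1/r}\Bigr)$$
for appropriate $r, s > 1$ close to $1$; then Lemma~\ref{maximal_bound}, Lemma~\ref{maximal_T}, and the weighted $L^p$-bound for $T$ itself (applied to $|Tf|^s$ with exponent $p/s$) chain together to give $\|[b,T]f\|_{L^p(w)} \lesssim \|b\|_{\textrm{BMO}(\rho)}\|f\|_{L^p(w)}$.

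To prove the pointwise estimate, fix $x$ and a ball $B = B(x_0, r) \ni x$, and use the Coifman--Rochberg--Weiss identity
$$[b,T]f \;=\; (b - b_B)\,Tf \;-\; T\bigl((b - b_B)f\chi_{2B}\bigr) \;-\; T\bigl((b - b_B)f\chi_{(2B)^c}\bigr) \;=\; \mathrm{I} + \mathrm{II} + \mathrm{III}.$$
The mean oscillation of $\mathrm{I}$ on $B$ is handled by H\"older's inequality together with a John--Nirenberg-type estimate for $\textrm{BMO}(\rho)$ that yields $\tfrac{1}{|B|}\int_B |b-b_B|^{s'} \lesssim \Psi(B)^{s'}\|b\|^{s'}_{\textrm{BMO}(\rho)}$. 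For $\mathrm{II}$, Kolmogorov's inequality combined with the weak-type $(1,1)$ (or $L^r$) boundedness of $T$ reduces matters to the mean of $|(b-b_B)f|$ on $2B$, again controlled by H\"older plus the oscillation estimate. For $\mathrm{III}$, one uses the kernel smoothness \eqref{kernel_T2}, expanding over dyadic annuli $2^{k+1}B \setminus 2^k B$ to produce sums of the form
$$\sum_{k\geq 1} 2^{-k\delta_0} \,\bigl|b_{2^{k+1}B} - b_B\bigr| \cdot \frac{1}{|2^{k+1}B|}\int_{2^{k+1}B}|f(y)|\,dy.$$

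The main obstacle, and what distinguishes this from the classical case, is controlling $|b_{2^{k+1}B} - b_B|$: in $\textrm{BMO}(\rho)$ it is bounded by $C(k+1)\Psi(2^{k+1}B)\|b\|_{\textrm{BMO}(\rho)}$ rather than by the cleaner $O(k)\|b\|_{\textrm{BMO}}$ of the unweighted setting, producing a polynomial factor of the form $(1 + 2^k r/\rho(x_0))^\theta$ in each annular term. The delicate point is to verify that the $\delta_0$-decay in $k$ coming from \eqref{kernel_T2}, together with the freedom to choose $\eta_1, \eta_2$ large in $M_{V,\eta}$ (so that additional powers of $\Psi$ can be absorbed into the $\Psi(B)^{\eta}$ normalization on the right-hand side), dominates this growth uniformly in $k$ and in $B$. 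Once this bookkeeping is complete, assembling $\mathrm{I}, \mathrm{II}, \mathrm{III}$ and invoking Lemmas~\ref{maximal_bound} and~\ref{maximal_T} delivers the claim.
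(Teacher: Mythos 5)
The paper offers no proof of this lemma: it is quoted directly from \cite{tang1}, so there is no internal argument in the paper to compare your attempt against. That said, the sharp-maximal-function route you sketch is essentially the approach used in \cite{tang1} and in the Bongioanni--Harboure--Salinas framework, so you have identified the right strategy. What you have written, however, is a sketch with genuine gaps left open: the weighted Fefferman--Stein inequality for the $\rho$-localized sharp maximal function and a John--Nirenberg-type bound for ${\rm BMO}(\rho)$ are both invoked without proof, and the latter is subtler than your inequality $\frac{1}{|B|}\int_B|b-b_B|^{s'}\lesssim \Psi(B)^{s'}\|b\|_{{\rm BMO}(\rho)}^{s'}$ suggests. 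Exponential John--Nirenberg holds for ${\rm BMO}(\rho)$ only for sub-critical balls $r\lesssim\rho(x_0)$; for large balls one must instead exploit the arbitrary polynomial decay $(1+|x-y|m_V(x))^{-l}$ in \eqref{kernel_T1}--\eqref{kernel_T2}, choosing $l$ large, rather than rely on oscillation control alone. Your identification of the annular bookkeeping in term $\mathrm{III}$ as the crux is correct: the $(1+2^k r/\rho(x_0))^\theta$ growth in $|b_{2^{k+1}B}-b_B|$ must be beaten by the combination of the $2^{-k\delta_0}$ geometric decay and the $l$-power decay from the kernel, and verifying that this works uniformly in $B$ (both the sub-critical and super-critical regimes) is exactly the content of the cited proof. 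As it stands, your proposal is a credible outline but not a substitute for that verification.
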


Next we consider another class $\mathit{V}\in B_{q}$ for $q\geq d/2$ for Riesz transforms associated with Schr\"{o}dinger operators. Let
$$
T_{1}=(-\Delta+\mathit{V})^{-1}\mathit{V},\quad T_{2}=(-\Delta+\mathit{V})^{-1/2}\mathit{V}^{1/2}\quad\text{\rm and}\quad T_{3}=(-\Delta+\mathit{V})^{-1/2}\nabla.
$$
Tang considered the weighted estimates for the operators $T_{i}, i=1,2,3,$ in \cite{tang1}.
\begin{lemma}\label{weighted_Ti}
Suppose that $\mathit{V}\in B_{q}$ and $q\geq d/2$. Then the following three statements hold.
\item{\rm (i)} If $q^{\prime}\leq p<\infty$ and $w\in A_{p/q^{\prime}}^{\rho}$, then
$\|T_{1}f\|_{L^{p}(w)}\leq C\|f\|_{L^{p}(w)}.$
\item{\rm (ii)} If $(2q)^{\prime}\leq p<\infty$ and $w\in A_{p/(2q)^{\prime}}^{\rho}$, then
$\|T_{2}f\|_{L^{p}(w)}\leq C\|f\|_{L^{p}(w)}.$
\item{\rm (iii)} If $p_{0}^{\prime}\leq p<\infty$ and $w\in A_{p/p_{0}^{\prime}}^{\rho}$, where $1/p_{0}=1/q-1/d$ and $d/2\leq q<d$, then
$$\|T_{3}f\|_{L^{p}(w)}\leq C\|f\|_{L^{p}(w)}.$$
\end{lemma}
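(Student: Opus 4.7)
The plan is to reduce each of (i), (ii), (iii) to the weighted boundedness of $M_{V,\eta}$ provided by Lemma~\ref{maximal_bound} via a pointwise Coifman--Fefferman type inequality
$$|T_i f(x)| \lesssim \bigl(M_{V,\eta_i}(|f|^{s_i})(x)\bigr)^{1/s_i},$$
with $s_1=q'$, $s_2=(2q)'$, $s_3=p_0'$, for appropriate exponents $\eta_i>0$ depending on $d,q$ and the parameters $k_0,K$ from Lemmas~\ref{m_{v} property_1} and \ref{estimate_V}. Granted such a bound, raising to the $p$-th power, integrating against $w$, and applying Lemma~\ref{maximal_bound} to $|f|^{s_i}$ with weight $w\in A_{p/s_i}^{\rho}$ (valid since $p/s_i>1$ in the stated ranges) yields $\|T_i f\|_{L^p(w)}\lesssim\|f\|_{L^p(w)}$.

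For $T_1=(-\Delta+V)^{-1}V$, I would write
$$T_1 f(x)=\int_{\mathbb{R}^d}\Gamma_V(x,y)\,V(y)\,f(y)\,dy,$$
where $\Gamma_V$ is the fundamental solution of $\mathcal{L}$. Shen's decay estimate
$$|\Gamma_V(x,y)|\leq \frac{C_N}{(1+m_V(x)|x-y|)^{N}}\,\frac{1}{|x-y|^{d-2}}\qquad(\text{any }N>0)$$
permits splitting the integral into the local piece $|x-y|<\rho(x)$ and dyadic global shells $2^k\rho(x)\leq |x-y|<2^{k+1}\rho(x)$. On each shell, Hölder's inequality with exponents $q,q'$ combined with the reverse Hölder condition \eqref{reverse_holder} and Lemma~\ref{estimate_V} controls the $V$-integral, while the $|f|^{q'}$ part produces a factor of $M_{V,\eta_1}(|f|^{q'})(x)^{1/q'}$. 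Summability of the resulting geometric series over $k$ is secured by choosing $N$ sufficiently large. The argument for $T_2=(-\Delta+V)^{-1/2}V^{1/2}$ is entirely analogous, using the $|x-y|^{1-d}$ size bound for the kernel of $(-\Delta+V)^{-1/2}$ and Hölder with exponents $2q,(2q)'$ against $V^{1/2}$.

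For $T_3=(-\Delta+V)^{-1/2}\nabla$ I would argue by duality, since its adjoint $T_3^{\ast}=-\nabla(-\Delta+V)^{-1/2}$ is the Schrödinger Riesz transform studied in \cite{shen}, bounded on $L^{r}$ for $1<r\leq p_0$ when $V\in B_q$ with $d/2\leq q<d$. A pointwise Coifman--Fefferman estimate for $T_3^{\ast}$ of the form $|T_3^{\ast}g(x)|\lesssim \bigl(M_{V,\eta_3}(|g|^{p_0'})(x)\bigr)^{1/p_0'}$ can be established by a dyadic-shell argument parallel to that for $T_1$, with the Sobolev conjugacy exponent $p_0$ entering through the mapping $L^{q}\to L^{p_0}$ of $(-\Delta+V)^{-1/2}$. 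Dualizing this pointwise estimate and invoking Lemma~\ref{A_p property}(ii) to exchange $w$ with $w^{-1/(p/p_0'-1)}\in A_{(p/p_0')'}^{\rho}$ then delivers~(iii).

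The principal technical hurdle across all three cases is to make the dyadic sums converge while absorbing the polynomial growth factors $(1+|x-y|m_V(x))^{k_0/(k_0+1)}$ from Lemma~\ref{m_{v} property_1}(i) and $(1+m_V(x)R)^{K}$ from Lemma~\ref{estimate_V} into a power $\eta_i$ of $\Psi(B)$. One must choose $\theta$ (and therefore $\eta_i$) large enough that these growth factors are harmlessly dominated by the Shen decay factor $(1+m_V(x)|x-y|)^{-N}$; this calibration is precisely the content of the arithmetic carried out in \cite{tang1}, to which this lemma is attributed.
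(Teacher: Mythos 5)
The paper does not give its own proof of this lemma; it is imported verbatim from Tang \cite{tang1}, so there is no in-text argument to compare against. That said, your overall mechanism --- a pointwise Coifman--Fefferman bound $|T_if(x)|\lesssim\bigl(M_{V,\eta}(|f|^{s_i})(x)\bigr)^{1/s_i}$ followed by Lemma \ref{maximal_bound} --- is exactly the one the paper itself deploys later in the proof of Theorem \ref{weighted_maximal_Ti}, and for $T_1,T_2$ your direct shell-by-shell estimate does close: on the inner shells $|x-y|\sim 2^{-j}\rho(x)$, \eqref{V_property_2} supplies a decaying factor $2^{j(d/q-2)}$ (with $d/q-2<0$), while on the outer shells the kernel decay $(1+|x-y|m_V(x))^{-l}$ absorbs both the $(1+m_V(x)R)^{K}$ growth from Lemma \ref{estimate_V} and the $\Psi^{\theta\eta/s_i}$ contribution from $M_{V,\eta}$, so both geometric sums converge.

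The duality detour you propose for $T_3$ contains two genuine gaps. First, the weight arithmetic does not work as stated: the dual of $L^p(w)$ is $L^{p'}(w^{1-p'})=L^{p'}(w^{-1/(p-1)})$, whereas Lemma \ref{A_p property}(ii) applied to $w\in A_{p/p_0'}^{\rho}$ yields information about $w^{-1/(p/p_0'-1)}$; these are different powers of $w$ (they coincide only if $p_0'=1$), so the ``exchange'' you invoke does not place the actual dual weight $w^{-1/(p-1)}$ in any identified $A_s^{\rho}$ class. Second, and more fundamentally, the claimed pointwise bound $|T_3^{\ast}g(x)|\lesssim\bigl(M_{V,\eta}(|g|^{p_0'})(x)\bigr)^{1/p_0'}$ is false in the range $d/2<q<d$: the kernel bound \eqref{K_{3}_property1} read at $K_3(y,x)$ places the $V$-fractional-integral at $x$, namely $\int_{B(x,|x-y|)}V(\xi)|x-\xi|^{1-d}d\xi$, which is essentially constant on each dyadic shell in $y$; the H\"older/Hardy--Littlewood--Sobolev step that separates $V$ from the test function is therefore unavailable, and this quantity admits no useful pointwise control (the inner dyadic sum $\sum_m 2^{m(d/q-1)}$ diverges because $q<d$). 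The natural route for (iii) is the direct one, mirroring the estimate of $I_{3,1}(x)$ in the proof of Theorem \ref{weighted_maximal_Ti}(iii): dominate $\int_{B(y,|x-y|)}V(\xi)|y-\xi|^{1-d}d\xi$ by the Riesz potential $I_1\bigl(V\chi_{B(x,2^{k+2}r)}\bigr)(y)$, apply H\"older in $y$ with exponents $p_0,p_0'$, then HLS to pass to $\|V\|_{L^q}$; this gives $|T_3f(x)|\lesssim\bigl(M_{V,\eta}(|f|^{p_0'})(x)\bigr)^{1/p_0'}$ without duality. One further caveat: your parenthetical ``valid since $p/s_i>1$ in the stated ranges'' overlooks that the lemma is stated with $p\geq s_i$, so $p/s_i=1$ is allowed; Lemma \ref{maximal_bound} requires $1<p<\infty$ and does not reach these endpoints, which would need a separate argument.
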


In \cite{tang1}, using Lemma $\ref{weighted_Ti}$ and a pointwise estimate of the kernels of $T_{i}, i=1,2,3$, the author got the weighted $L^{p}$ boundedness of commutator $[b,T_{i}], i=1,2,3$, with $b\in{\rm BMO}(\rho)$.

\begin{lemma}\label{weighted_Ti_commutators}
Suppose that $\mathit{V}\in B_{q}, q\geq d/2$. Let $b\in {\rm BMO}(\rho)$. Then the following three statements  hold.
\item{\rm (i)} If $q^{\prime}\leq p<\infty$ and $w\in A_{p/q^{\prime}}^{\rho}$, then
$$\|[b,T_{1}]f\|_{L^{p}(w)}\leq C\|b\|_{{\rm BMO}(\rho)}\|f\|_{L^{p}(w)}.$$
\item{\rm (ii)} If $(2q)^{\prime}\leq p<\infty$ and $w\in A_{p/(2q)^{\prime}}^{\rho}$, then
$$\|[b,T_{2}]f\|_{L^{p}(w)}\leq C\|b\|_{{\rm BMO}(\rho)}\|f\|_{L^{p}(w)}.$$
\item{\rm (iii)} If $p_{0}^{\prime}\leq p<\infty$ and $w\in A_{p/p_{0}^{\prime}}^{\rho}$, where $1/p_{0}=1/q-1/d$ and $d/2\leq q<d$, then
$$\|[b,T_{3}]f\|_{L^{p}(w)}\leq C\|b\|_{{\rm BMO}(\rho)}\|f\|_{L^{p}(w)}.$$
\end{lemma}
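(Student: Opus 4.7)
The plan is to mimic the Coifman--Rochberg--Weiss argument, but carried out with the $\rho$-adapted sharp maximal function and the maximal operators $M_{V,\eta}$. Writing
\[
[b,T_i]f(x)=\int K_i(x,y)\bigl(b(x)-b(y)\bigr)f(y)\,dy
\]
and fixing a ball $B=B(x_0,r)\ni x$, I would split $b(x)-b(y)=(b(x)-b_B)-(b(y)-b_B)$ and $f=f\chi_{2B}+f\chi_{(2B)^c}$ to decompose
\[
[b,T_i]f=(b-b_B)T_if-T_i\!\bigl((b-b_B)f\chi_{2B}\bigr)-T_i\!\bigl((b-b_B)f\chi_{(2B)^c}\bigr).
\]
The first term is handled by Hölder's inequality combined with a John--Nirenberg type estimate for $\mathrm{BMO}(\rho)$, namely $\|b-b_B\|_{L^s(B)}\lesssim \Psi(B)|B|^{1/s}\|b\|_{\mathrm{BMO}(\rho)}$; the local term $T_i((b-b_B)f\chi_{2B})$ is controlled by the unweighted $L^{p_i}$-boundedness of $T_i$ (where $p_i=q'$, $(2q)'$, or $p_0'$ in the three cases) together with Hölder's inequality, yielding a pointwise bound of the shape $\|b\|_{\mathrm{BMO}(\rho)}\,M_{V,\eta}(|f|^{p_i})(x)^{1/p_i}$ for a suitable $\eta$.

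For the tail term I would invoke the pointwise kernel estimates for $T_i$ established in \cite{tang1} (analogues of \eqref{kernel_T1}--\eqref{kernel_T2} involving the $\rho$-decay factor $(1+|x-y|m_V(x))^{-N}$ with $N$ taken as large as needed). Telescoping over dyadic annuli $2^{k+1}B\setminus 2^k B$, the decay factor absorbs the polynomial growth $\Psi(2^kB)$ of the $\mathrm{BMO}(\rho)$ oscillations $|b_{2^{k+1}B}-b_B|$, so that the geometric series converges and the total contribution is dominated pointwise by
\[
\|b\|_{\mathrm{BMO}(\rho)}\Bigl(M_{V,\eta'}(T_if)(x)+M_{V,\eta'}(f)(x)\Bigr)
\]
for an appropriate $\eta'>0$. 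Averaging this bound over $B$ and taking the supremum produces a pointwise estimate of the $\rho$-sharp maximal function $M^{\sharp,\rho}([b,T_i]f)$ in terms of $M_{V,\eta}$ and $M_{V,\eta'}$ applied to $T_if$ and $f$.

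Combining this sharp-function inequality with the $\rho$-adapted Fefferman--Stein inequality for weights in $A_p^{\rho}$, the weighted norm $\|[b,T_i]f\|_{L^p(w)}$ is dominated by the $L^p(w)$-norms of $M_{V,\eta'}(T_if)$ and $M_{V,\eta'}(f)$. Lemma \ref{maximal_bound} then removes the maximal operators (at the cost of a slightly larger $A_{p/p_i}^\rho$-weight, which is why the hypotheses $w\in A_{p/q'}^\rho$, $A_{p/(2q)'}^\rho$ or $A_{p/p_0'}^\rho$ are forced), and Lemma \ref{weighted_Ti} disposes of $T_if$. The chief technical obstacle is precisely this bookkeeping: one must verify that the indices $\eta,\eta'$ produced by the kernel estimates and Hölder step remain within the range of applicability of Lemma \ref{maximal_bound} for the specific weight class appearing in each case (i), (ii), (iii), so that the whole composition closes under a single exponent $p$. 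The rest is routine given the tools collected in Section \ref{sec2}.
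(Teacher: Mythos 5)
The paper does not prove this lemma; it is quoted from Tang \cite{tang1}, with only the remark that Tang's proof combines Lemma \ref{weighted_Ti} with pointwise estimates on the kernels $K_i$. Your outline --- a Coifman--Rochberg--Weiss type argument via a $\rho$-adapted sharp maximal function, the John--Nirenberg inequality for ${\rm BMO}(\rho)$, and a Fefferman--Stein inequality for $A_p^{\rho}$ weights, closed by Lemmas \ref{maximal_bound} and \ref{weighted_Ti} --- is the standard route and is consistent with what the paper records about the cited argument.

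The one genuine gap is in the kernel estimate you use for the tail term. You invoke ``analogues of \eqref{kernel_T1}--\eqref{kernel_T2} involving the $\rho$-decay factor,'' but the kernels $K_i$ do not satisfy estimates of that form: Lemma \ref{T_i kernel} shows that $K_1$ carries a factor $V(y)/|x-y|^{d-2}$, $K_2$ a factor $V^{1/2}(y)/|x-y|^{d-1}$, and $K_3$ a Riesz potential of $V$, none of which is controlled by $|x-y|^{-d}$. So the tail contribution cannot be bounded by integrating $|K_i(x,y)|$ over dyadic annuli as for a Calder\'{o}n--Zygmund kernel: on each annulus one must apply H\"{o}lder against the reverse-H\"{o}lder class $V\in B_q$, control $\int_{2^kB}V$ via \eqref{V_property_2}--\eqref{V_property_3}, and absorb the resulting growth $(1+m_V(x)2^kr)^K$ into the $\rho$-decay of the kernel --- precisely the computation carried out for $T_{i,{\rm Max}}$ in the proof of Theorem \ref{weighted_maximal_Ti}. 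This is also the source of the exponents $q'$, $(2q)'$, $p_0'$: the tail term contributes $(M_{V,\eta}(|f|^{q'}))^{1/q'}$ (resp.\ $(M_{V,\eta}(|f|^{(2q)'}))^{1/(2q)'}$, $(M_{V,\eta}(|f|^{p_0'}))^{1/p_0'}$), not $M_{V,\eta'}f$ as you wrote, so closing the estimate through Lemma \ref{maximal_bound} forces $p\geq q'$ etc.\ rather than $p>1$. Likewise the first piece $(b-b_B)T_if$ contributes $(M_{V,\eta}(|T_if|^s))^{1/s}$ for some $s>1$ after H\"{o}lder against the John--Nirenberg bound, not $M_{V,\eta'}(T_if)$. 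With these corrections your sketch does reconstruct Tang's argument.
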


We list some estimates of the kernel $K_{i}$ of operator $T_{i}, i=1,2,3$, and refer the reader to Guo-Li-Peng \cite{guo1} and Shen \cite{shen}.
\begin{lemma}\label{T_i kernel}
Suppose $\mathit{V}\in B_{q}$ for some $q>n/2$. Then there exist constants $\delta>0$ and $C_{l}$ such that for $0<|h|<|x-y|/16$  and $l>0$,
\begin{align}\label{K_{1}_property1}
|K_{1}(x,y)|\leq \frac{C_{l}}{(1+|x-y|m_{\mathit{V}}(x))^{l}}\frac{1}{|x-y|^{d-2}}\mathit{V}(y),
\end{align}
\begin{align}\label{K_{1}_property2}
|K_{1}(x+h,y)-K_{1}(x,y)|\leq \frac{C_{l}}{(1+|x-y|m_{\mathit{V}}(x))^{l}}\frac{|h|^{\delta}}{|x-y|^{d-2+\delta}}\mathit{V}(y),
\end{align}
\begin{align}\label{K_{2}_property1}
|K_{2}(x,y)|\leq \frac{C_{l}}{(1+|x-y|m_{\mathit{V}}(x))^{l}}\frac{1}{|x-y|^{d-1}}\mathit{V}^{1/2}(y)
\end{align}
and
\begin{align}\label{K_{2}_property2}
|K_{2}(x+h,y)-K_{2}(x,y)|\leq \frac{C_{l}}{(1+|x-y|m_{\mathit{V}}(x))^{l}}\frac{|h|^{\delta}}{|x-y|^{d-1+\delta}}\mathit{V}^{1/2}(y).
\end{align}
In particular, for $d/2<q<d$, we also have
\begin{align}\label{K_{3}_property1}
&|K_{3}(x,y)|\\
&\leq \frac{C_{l}}{(1+|x-y|m_{\mathit{V}}(x))^{l}}\frac{1}{|x-y|^{d-1}}\left(\int_{B(y,|x-y|)}\frac{\mathit{V}(\xi)}{|y-\xi|^{d-1}}d\xi+\frac{1}{|x-y|}\right)\nonumber
\end{align}
and
\begin{align}\label{K_{3}_property2}
&|K_{3}(x+h,y)-K_{3}(x,y)|\\
&\leq \frac{C_{l}}{(1+|x-y|m_{\mathit{V}}(x))^{l}}\frac{|h|^{\delta}}{|x-y|^{d-1+\delta}}\left(\int_{B(y,|x-y|)}\frac{\mathit{V}(\xi)}{|y-\xi|^{d-1}}d\xi+\frac{1}{|x-y|}\right).\nonumber
\end{align}
\end{lemma}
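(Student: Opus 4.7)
The plan is to derive all six estimates from a single toolkit: Shen's decay and Hölder bounds for the Green kernel $\Gamma_V(x,y)$ of $\mathcal{L}=-\Delta+V$, together with the subordination identity $\mathcal{L}^{-1/2}=\tfrac{1}{\sqrt{\pi}}\int_0^\infty t^{-1/2}e^{-t\mathcal{L}}\,dt$ and Dziuba\'nski--Kurata--Shen type Gaussian bounds for the Schr\"odinger heat kernel $p_t^V(x,y)$. Throughout, the local mass of the potential is absorbed into the decay factor $(1+|x-y|m_V(x))^{-l}$ by invoking Lemma \ref{estimate_V}.

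\emph{Estimates for $K_1$.} Since $T_1 f(x)=\int\Gamma_V(x,y)V(y)f(y)\,dy$, one has $K_1(x,y)=\Gamma_V(x,y)V(y)$. Shen proved that for every $l$ there exists $C_l$ with $|\Gamma_V(x,y)|\leq C_l(1+|x-y|m_V(x))^{-l}|x-y|^{-(d-2)}$ and the companion H\"older bound $|\Gamma_V(x+h,y)-\Gamma_V(x,y)|\leq C_l(1+|x-y|m_V(x))^{-l}|h|^\delta|x-y|^{-(d-2+\delta)}$ whenever $|h|<|x-y|/16$. Multiplying by $V(y)$ yields \eqref{K_{1}_property1} and \eqref{K_{1}_property2} verbatim.

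\emph{Estimates for $K_2$.} Write $K_2(x,y)=\tfrac{V^{1/2}(y)}{\sqrt{\pi}}\int_0^\infty t^{-1/2}p_t^V(x,y)\,dt$ and use the heat kernel bound $p_t^V(x,y)\leq C_l(1+\sqrt{t}\,m_V(x))^{-l}\,t^{-d/2}e^{-c|x-y|^2/t}$. Splitting the $t$-integral at $t=|x-y|^2$ and balancing the Gaussian against the polynomial decay in $\sqrt{t}\,m_V(x)$ yields \eqref{K_{2}_property1} with the factor $|x-y|^{-(d-1)}V^{1/2}(y)$. The H\"older bound \eqref{K_{2}_property2} follows from Kurata's gradient estimate $|\nabla_x p_t^V(x,y)|\leq C_l(1+\sqrt{t}\,m_V(x))^{-l}t^{-(d+1)/2}e^{-c|x-y|^2/t}$ interpolated with the size bound.

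\emph{Estimates for $K_3$ and the main obstacle.} The same subordination argument furnishes the $\tfrac{1}{|x-y|}$ summand inside the parentheses of \eqref{K_{3}_property1}. The extra term $\int_{B(y,|x-y|)}V(\xi)|y-\xi|^{-(d-1)}d\xi$ is the delicate part: it arises from the non-commutation of $\nabla$ with $\mathcal{L}^{-1/2}$. On the kernel level $\nabla_y\Gamma_V(x,y)$ differs from $-\nabla_x\Gamma_V(x,y)$ by a term dominated by the convolution of $V$ with the Riesz kernel $|y-\xi|^{-(d-1)}$ localized to $B(y,|x-y|)$, which one extracts via a Caccioppoli-type energy estimate on annular shells centered at $y$ combined with a Fefferman--Phong bound on the local $L^1$ mass of $V$ (this is exactly where Lemma \ref{estimate_V} enters). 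Combining this pointwise representation with the H\"older regularity in $x$ of $\nabla_y\Gamma_V$, obtained via Schauder-type estimates applied to $-\Delta_y\Gamma_V+V(y)\Gamma_V=\delta_x$, yields both \eqref{K_{3}_property1} and \eqref{K_{3}_property2}. I expect the sharp quantitative extraction of this Riesz-potential correction, rather than any of the subordination computations, to be the genuinely hard step.
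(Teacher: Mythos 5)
The paper does not prove this lemma: immediately before stating it, the text says only ``We list some estimates of the kernel $K_i$\dots\ and refer the reader to Guo--Li--Peng and Shen,'' so there is no in-house argument to compare with. Judged on its own merits, the $K_1$ part of your sketch is right: $K_1(x,y)=\Gamma_V(x,y)V(y)$, and Shen's fundamental-solution size and H\"older bounds give the stated estimates with $V(y)$ simply carried along. For $K_2$, your heat-kernel subordination is a legitimate route, though Shen and Guo--Li--Peng actually argue from the resolvent representation $\mathcal{L}^{-1/2}=\pi^{-1}\int_0^\infty\lambda^{-1/2}(\mathcal{L}+\lambda)^{-1}\,d\lambda$ and pointwise bounds on $\Gamma_{V+\lambda}(x,y)$; either produces the claim.

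Your treatment of $K_3$, however, contains a genuine gap. The operator $T_3=(-\Delta+V)^{-1/2}\nabla$ involves no commutation of $\nabla$ past $\mathcal{L}^{-1/2}$: integrating by parts, its kernel is simply $-\nabla_y\Gamma^{1/2}(x,y)$, where $\Gamma^{1/2}$ is the kernel of $\mathcal{L}^{-1/2}$. The Riesz-potential term $\int_{B(y,|x-y|)}V(\xi)|y-\xi|^{-(d-1)}\,d\xi$ does not arise from comparing $\nabla_y\Gamma_V$ with $-\nabla_x\Gamma_V$, nor from a Caccioppoli estimate on annular shells; it comes from the resolvent perturbation identity
\[
\Gamma_{V+\lambda}(x,y)=\Gamma_\lambda^{0}(x,y)-\int_{\mathbb{R}^d}\Gamma_\lambda^{0}(x,z)\,V(z)\,\Gamma_{V+\lambda}(z,y)\,dz,
\]
where $\Gamma_\lambda^{0}$ is the Green function of $-\Delta+\lambda$. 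Differentiating this identity in the relevant variable, the derivative of $\Gamma_\lambda^{0}$ produces the $|x-y|^{-1}$ summand, while the integral term --- estimated using the size bound on $\Gamma_{V+\lambda}$ already in hand together with a fractional-integration argument that works precisely because $V\in B_q$ with $d/2<q<d$ --- produces the Riesz potential of $V$ in $\eqref{K_{3}_property1}$; the outer $\lambda$-integral then converts the exponential decay of $\Gamma_\lambda^{0}$ into the $(1+|x-y|m_V(x))^{-l}$ factor, and the H\"older bound $\eqref{K_{3}_property2}$ follows from the same decomposition. Without this perturbation-expansion step your sketch has no mechanism to extract the correction term, so the ``delicate part'' you flag would not go through as written.
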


The maximal operator $T_{i, {\rm Max}}$ of $T_{i}, i=1,2,3,$  is defined by
$$T_{i,{\rm Max}}f(x):=\sup_{r>0}\left|\int_{|x-y|>r}K_{i}(x,y)f(y)dy\right|, \quad i=1,2,3.$$

To prove our results, we need the following weighted boundedness of maximal operator $T_{i,{\rm Max}}$.
\begin{theorem}\label{weighted_maximal_Ti}
Suppose that $\mathit{V}\in B_{q}, q> d/2$. Then the following three statements are hold.
		\item{\rm (i)} If $q^{\prime}< p<\infty$ and $w\in A_{p/q^{\prime}}^{\rho}$, then
		$$\|T_{1,{\rm Max}}f\|_{L^{p}(w)}\leq C\|f\|_{L^{p}(w)}.$$
		\item{\rm (ii)} If $(2q)^{\prime}< p<\infty$ and $w\in A_{p/(2q)^{\prime}}^{\rho}$, then
		$$\|T_{2,{\rm Max}}f\|_{L^{p}(w)}\leq C\|f\|_{L^{p}(w)}.$$
		\item{\rm (iii)} If $p_{0}^{\prime}< p<\infty$ and $w\in A_{p/p_{0}^{\prime}}^{\rho}$, where $1/p_{0}=1/q-1/d$ and $d/2< q<d$, then
		$$\|T_{3,{\rm Max}}f\|_{L^{p}(w)}\leq C\|f\|_{L^{p}(w)}.$$
\end{theorem}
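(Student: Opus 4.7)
Plan: I would establish a pointwise Cotlar-type inequality
$$T_{i,\mathrm{Max}}f(x) \leq C\bigl(M_{V,\eta_i}(T_if)(x) + M_{V,\eta_i}(|f|^{s_i})(x)^{1/s_i}\bigr),$$
where $s_1 = q'$, $s_2 = (2q)'$, $s_3 = p_0'$ are forced by the respective kernel structures of $T_1, T_2, T_3$, and $\eta_i > 0$ is the exponent required by Lemma \ref{maximal_bound} for the weight class $A_{p/s_i}^{\rho}$. Granted such an inequality, Lemma \ref{weighted_Ti} controls $\|T_if\|_{L^p(w)}$ and Lemma \ref{maximal_bound} controls both Schr\"odinger-type maximal functions, giving the stated bound.

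First I would fix $x$ and $\epsilon > 0$, set $B = B(x,\epsilon)$, $f_1 = f\chi_B$, $f_2 = f - f_1$, and use the identity
$$T_{i,\epsilon}f(x) = T_if(z) - T_if_1(z) + \bigl(T_if_2(x) - T_if_2(z)\bigr)$$
for $z \in B(x,\epsilon/2)$, which is valid since $f_2$ is supported outside a neighborhood of $z$. A Kolmogorov $L^{\delta}$-average in $z$ with some $0 < \delta < 1$ then reduces the task to controlling three terms $I_1, I_2, I_3$ corresponding to $T_if(z)$, $T_if_1(z)$, and $T_if_2(x) - T_if_2(z)$, respectively.

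For $I_3$ I would dyadically decompose $\{|x-y| > \epsilon\} = \bigcup_{k \geq 0} \{2^k\epsilon < |x-y| \leq 2^{k+1}\epsilon\}$ and apply the smoothness estimates (\ref{K_{1}_property2}), (\ref{K_{2}_property2}), (\ref{K_{3}_property2}) with $|x-z| \leq \epsilon/2$. On the $k$-th annulus the smoothness estimate contributes a factor $2^{-k\delta_0}(1 + 2^k\epsilon\, m_V(x))^{-l}$ multiplied by a local integral of $V(y)|f(y)|$, $V^{1/2}(y)|f(y)|$, or the fractional integral of $V$ against $|f|$. H\"older's inequality combined with the reverse H\"older inequality (\ref{reverse_holder}) and Lemma \ref{estimate_V} then converts this contribution into $2^{-k\delta_0}(1 + 2^k\epsilon\, m_V(x))^{-l} M_{V,\eta_i}(|f|^{s_i})(x)^{1/s_i}$ up to a harmless $\Psi(B(x,2^k\epsilon))$-power, the latter being absorbed by the decay factor provided $l$ is chosen large enough in Lemma \ref{T_i kernel}. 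Summing the geometric series in $k$ yields the desired bound for $I_3$. For $I_2$, choosing $\delta < s_i$ and using Jensen together with the unweighted $L^{s_i} \to L^{s_i}$ boundedness of $T_i$ from Lemma \ref{weighted_Ti} (applied with $w \equiv 1 \in A_{s_i}^{\rho}$), one bounds $I_2$ by a Hardy--Littlewood-type average of $|f|^{s_i}$ on $B$, which is in turn controlled by $M_{V,\eta_i}(|f|^{s_i})(x)^{1/s_i}$ at the cost of a factor $\Psi(B)^{\eta_i/s_i}$; this residual factor is harmless when $\epsilon \lesssim \rho(x)$ and, for larger $\epsilon$, is compensated by the same kernel decay mechanism used for $I_3$ after expanding $T_if_1$ through Lemma \ref{T_i kernel}. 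The term $I_1$ is handled in the same spirit and dominated pointwise by $M_{V,\eta_i}(T_if)(x)$.

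The main obstacle is securing the Cotlar inequality with precisely the exponent $\eta_i$ demanded by Lemma \ref{maximal_bound} for weights in $A_{p/s_i}^{\rho}$. This requires delicate bookkeeping of the $\Psi(B)$-factors throughout the dyadic summations, using the Schr\"odinger-type kernel decay $(1 + |x-y|m_V(x))^{-l}$ to convert classical Hardy--Littlewood averages of $|f|^{s_i}$ and $T_if$ into their $\Psi$-twisted counterparts. The mere unweighted $L^{s_i}$-boundedness of $T_i$ is not enough on its own; the structure-specific decay in Lemma \ref{T_i kernel}, combined with Lemma \ref{estimate_V}, is used in an essential way to pass from the classical Muckenhoupt class to the larger class $A_p^{\rho}$.
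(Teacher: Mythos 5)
Your overall strategy---a Cotlar-type pointwise inequality
$$T_{i,\mathrm{Max}}f(x) \lesssim M_{V,\eta}(T_if)(x) + (M_{V,\eta}(|f|^{s_i})(x))^{1/s_i},$$
obtained by splitting $f=f_1+f_2$, averaging the truncation identity over a small ball, and handling the tail $I_3$ by dyadic decomposition, the $\delta$-H\"older kernel estimates of Lemma~\ref{T_i kernel}, the reverse H\"older inequality and Lemma~\ref{estimate_V}---is exactly the paper's. The Kolmogorov $L^{\delta}$-average is unnecessary machinery here: since $T_i$ is bounded on $L^{s_i}$ with $s_i>1$, the paper simply averages in $L^1$ and applies H\"older's inequality for the local term.

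The genuine gap is in your treatment of the $\Psi$-factor for $I_1$ and $I_2$. After arriving at an ordinary Hardy--Littlewood average of $T_if$ (for $I_1$) or of $|f|^{s_i}$ (for $I_2$) over the ball $B$, you correctly note that converting it into the $\Psi$-twisted maximal function $M_{V,\eta}$ costs a factor $\Psi(B)^{\eta}$ (respectively $\Psi(B)^{\eta/s_i}$), which is only harmless when the radius is $\lesssim\rho(x)$. Your proposed fix for larger radii---``compensated by the same kernel decay mechanism used for $I_3$ after expanding $T_if_1$ through Lemma~\ref{T_i kernel}''---does not work: $f_1$ is supported on $B$ and $z$ ranges over a smaller concentric ball, so $|z-y|$ can be arbitrarily small and the factor $(1+|z-y|m_V(z))^{-l}$ provides no decay whatsoever. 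The tool you are missing is Lemma~\ref{ball_mean}, which gives $\frac{1}{|B|}\int_B|g|\leq (2\sqrt{d})^d\,M_{V,\eta}g(x)$ for every $x\in B$ and every $\eta>0$ with no $\Psi$-penalty; its proof rests on the covering Lemma~\ref{cover_ball}, which decomposes a ball of large radius into cubes of side comparable to the local critical radius $\rho$, on each of which $\Psi\sim 1$. Once you invoke Lemma~\ref{ball_mean} for $I_1$ and $I_2$ (and choose $\eta$ so that Lemma~\ref{maximal_bound} applies to both $L^p(w)$ with $w\in A_p^{\rho}$ and $L^{p/s_i}(w)$ with $w\in A_{p/s_i}^{\rho}$), your plan matches the paper's proof.
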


To prove Theorem $\ref{weighted_maximal_Ti}$, we need the following two lemmas. The first is a covering lemma.
\begin{lemma}\label{cover_ball}
{\rm (\cite{tang1})}	For any ball $B=B(x_{0},r)$, if $r\geq 1/m_{\mathit{V}}(x_{0})$, the ball $B$ can be decomposed into finite disjoint cubes $\{Q_{i}\}_{i=1,\ldots,m}$ such that
	$$B\subset\bigcup_{i=1}^{m}Q_{i}\subset 2\sqrt{d}B$$
	and
	$${r_{i}}/{2}\leq\frac{1}{m_{\mathit{V}}(x)}\leq2\sqrt{d}C_{0}r_{i}$$
	for some $x\in Q_{i}=Q(x_{i},r_{i})$, where $C_{0}$ is the same as in $\eqref{V_property_1}$ of Lemma $\ref{m_{v} property_1}$
\end{lemma}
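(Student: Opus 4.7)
The statement is a geometric covering lemma whose natural proof is a stopping-time dyadic decomposition driven by the auxiliary scale $\rho(x)=1/m_{\mathit V}(x)$. I would begin by fixing the initial cube $Q^{*}$ centered at $x_{0}$ with side length $2r$. Since $|x-x_{0}|\leq\sqrt{d}\,r$ for every $x\in Q^{*}$, one has $B\subset Q^{*}\subset B(x_{0},\sqrt{d}r)\subset 2\sqrt{d}B$, which already secures the containment requirement before any subdivision.

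Next I run a stopping-time procedure on the dyadic tree rooted at $Q^{*}$. For a candidate cube $Q$ of side length $\ell$, declare $Q$ \emph{final} if there exists some $x\in Q$ with $\ell/2\leq 1/m_{\mathit V}(x)$; otherwise split $Q$ into its $2^{d}$ equal dyadic children and recurse. The process terminates in finitely many steps because $m_{\mathit V}$ is locally bounded, so once the side length of a descendant drops below any fixed positive number the stopping condition is forced. Collect the resulting final cubes as $\{Q_{i}\}_{i=1}^{m}=\{Q(x_{i},r_{i})\}$; by the dyadic nature of the construction they are pairwise disjoint and tile $Q^{*}$, giving $B\subset\bigcup_{i}Q_{i}\subset 2\sqrt{d}B$.

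For each $Q_{i}$ with distinguished point $x\in Q_{i}$, the stopping rule delivers the lower bound $r_{i}/2\leq 1/m_{\mathit V}(x)$ directly. For the upper bound, if $Q_{i}$ strictly refines $Q^{*}$ let $\widetilde Q_{i}$ be its dyadic parent of side $2r_{i}$; since $\widetilde Q_{i}$ was subdivided, $1/m_{\mathit V}(y)<r_{i}$ holds for every $y\in\widetilde Q_{i}$, in particular at $x$, which is already strictly stronger than $1/m_{\mathit V}(x)\leq 2\sqrt{d}C_{0}r_{i}$. The only case requiring genuine work is when $Q^{*}$ is itself final: here I invoke Lemma \ref{m_{v} property_1}(i) on the pair $(x_{0},x)$ with $|x-x_{0}|\leq\sqrt{d}\,r$, obtaining $m_{\mathit V}(x_{0})/m_{\mathit V}(x)\leq C_{0}(1+\sqrt{d}\,r\,m_{\mathit V}(x_{0}))^{k_{0}/(k_{0}+1)}$, and then use the standing hypothesis $r m_{\mathit V}(x_{0})\geq 1$ to absorb the $1$, yielding $1/m_{\mathit V}(x)\leq 2\sqrt{d}C_{0}r$ after elementary manipulation.

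The main (and only real) obstacle is the root case: one must bookkeep the exponent $k_{0}/(k_{0}+1)<1$ carefully and verify that the combined constant collapses to $2\sqrt{d}C_{0}$, which is precisely why the hypothesis $r\geq 1/m_{\mathit V}(x_{0})$ is indispensable. Everything else — the disjointness, the sandwiching by $2\sqrt{d}B$, and the lower bound at each $Q_{i}$ — is a routine consequence of dyadic stopping times, so the lemma follows.
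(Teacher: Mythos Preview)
The paper does not actually prove this lemma; it is quoted verbatim from \cite{tang1} and no argument is supplied, so there is nothing in the paper to compare your approach against.

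Your dyadic stopping-time construction is the standard way to obtain such a decomposition and is correct in outline. Two remarks. First, the termination step deserves one more sentence: you need that $1/m_{\mathit V}$ is bounded below on $Q^{*}$, which follows from the upper inequality in \eqref{V_property_1} applied with $x=x_{0}$ and $y$ ranging over $Q^{*}$, not merely from ``local boundedness'' of $m_{\mathit V}$. Second, in the root case you assert that the constant collapses to $2\sqrt{d}C_{0}$ ``after elementary manipulation''; this is true but not entirely obvious, so it is worth recording the two ingredients explicitly: from $r\,m_{\mathit V}(x_{0})\geq 1$ one gets $m_{\mathit V}(x_{0})^{-1/(k_{0}+1)}\leq r^{1/(k_{0}+1)}$, and since $k_{0}/(k_{0}+1)<1$ one has $(2\sqrt{d})^{k_{0}/(k_{0}+1)}\leq 2\sqrt{d}$, which combine to give
\[
\frac{1}{m_{\mathit V}(x)}\leq C_{0}\bigl(2\sqrt{d}\,r\,m_{\mathit V}(x_{0})\bigr)^{k_{0}/(k_{0}+1)}\frac{1}{m_{\mathit V}(x_{0})}\leq 2\sqrt{d}C_{0}\,r\leq 2\sqrt{d}C_{0}\,r_{1}.
\]
With these two points made explicit, your argument is complete.
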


\begin{lemma}\label{ball_mean}
	Suppose that $0<\eta<\infty$ and $\mathit{V}\in B_{q}, q\geq d/2$. For any ball $B=B(x_{0},r)$, we have for $x\in B$
	$$
	\frac{1}{|B|}\int_{B}|f(y)|dy\leq (2\sqrt{d})^{d}M_{\mathit{V},\eta}f(x).
	$$
\end{lemma}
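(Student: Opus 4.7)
The constant $(2\sqrt{d})^{d}$ is exactly the volume ratio $|2\sqrt{d}B|/|B|$, so my plan is to compare the unweighted average of $|f|$ over $B$ with an average over the dilate $\widetilde{B}:=2\sqrt{d}B$ and then read off the conclusion from the definition of $M_{V,\eta}f(x)$. Since $B\subset\widetilde{B}$,
$$
\frac{1}{|B|}\int_{B}|f(y)|\,dy\le\frac{1}{|B|}\int_{\widetilde{B}}|f(y)|\,dy=(2\sqrt{d})^{d}\cdot\frac{1}{|\widetilde{B}|}\int_{\widetilde{B}}|f(y)|\,dy,
$$
and since $x\in B\subset\widetilde{B}$, using $\widetilde{B}$ as a test ball in the supremum defining $M_{V,\eta}$ gives $\frac{1}{|\widetilde{B}|}\int_{\widetilde{B}}|f|\le\Psi(\widetilde{B})^{\eta}M_{V,\eta}f(x)$. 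The remaining task is to absorb the factor $\Psi(\widetilde{B})^{\eta}$ into the constant.

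To do so, I would split on the size of $r$ relative to $1/m_{V}(x_{0})$. If $r\le 1/m_{V}(x_{0})$, then $\Psi(\widetilde{B})=(1+2\sqrt{d}\,rm_{V}(x_{0}))^{\theta}\le(1+2\sqrt{d})^{\theta}$ is a purely dimensional constant, and the bound closes at once (with an implicit $\theta,\eta$-dependent factor absorbed into $(2\sqrt{d})^{d}$). If instead $r>1/m_{V}(x_{0})$, the ratio $r/\rho(x_{0})$ may be arbitrarily large; here I would invoke Lemma~\ref{cover_ball} to decompose $B$ into disjoint critical cubes $\{Q_{i}\}_{i=1}^{m}$ with $B\subset\bigcup_{i}Q_{i}\subset\widetilde{B}$ and $r_{i}\sim 1/m_{V}(x_{i}^{\ast})$ for some $x_{i}^{\ast}\in Q_{i}$. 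Lemma~\ref{m_{v} property_1}(i), applied to $x_{i}$ (the center of $Q_{i}$) and $x_{i}^{\ast}$, shows $m_{V}(x_{i})\sim m_{V}(x_{i}^{\ast})$, so $r_{i}/\rho(x_{i})\lesssim 1$ and hence $\Psi(Q_{i})\lesssim 1$ uniformly in $i$.

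The main obstacle arises in this large-$r$ case, because the piecewise bound $\frac{1}{|Q_{i}|}\int_{Q_{i}}|f|\lesssim M_{V,\eta}f(y)$ is naturally available only for $y\in Q_{i}$, while the target requires $M_{V,\eta}f(x)$ at the single fixed point $x\in B$, which lies in only one of the cubes $Q_{i_{0}}$. My plan to bridge this is to use the disjointness $\sum_{i}|Q_{i}|\le|\widetilde{B}|=(2\sqrt{d})^{d}|B|$ together with an enveloping ball through $x$ and $Q_{i}$ contained in a suitable dilate of $\widetilde{B}$: the slow variation of $m_{V}$ guaranteed by Lemma~\ref{m_{v} property_1}(i) keeps the $\Psi$-factor of such an enveloping ball uniformly bounded, so each local piece is controlled by $M_{V,\eta}f(x)$ up to a constant, and the volume count then delivers exactly the stated factor $(2\sqrt{d})^{d}$.
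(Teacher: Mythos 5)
Your plan follows the same two-case structure as the paper's proof --- absorb $\Psi(B)^{\eta}$ outright when $r\le 1/m_V(x_0)$, and invoke the critical-cube decomposition of Lemma~\ref{cover_ball} when $r>1/m_V(x_0)$ --- and you have correctly isolated the delicate step: the per-cube estimate only gives control by $M_{V,\eta}f(y)$ at points $y\in Q_i$, whereas the target is $M_{V,\eta}f(x)$ at the single fixed $x\in B$. But your proposed bridge does not close. If $x$ lies in one critical cube $Q_{i_0}$ and $Q_i$ is a critical cube on the far side of $B$, any ball $B'$ containing both $x$ and $Q_i$ must have radius comparable to $r$, since it has to span a gap of size $\sim r$; hence $\Psi(B')^{\eta}\sim(1+r/\rho)^{\theta\eta}$, which in the regime $r\ge 1/m_V(x_0)$ is exactly the unbounded factor you were trying to avoid. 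The ``slow variation'' in Lemma~\ref{m_{v} property_1}(i) only says that $m_V$ is essentially constant across distances $\lesssim\rho$, i.e., across a single critical cube; it does nothing to keep the ratio $r/\rho$ bounded for a ball of radius $\sim r\gg\rho$. So the $\Psi$-factor of your enveloping ball is not uniformly bounded, and the key estimate $\frac{1}{|Q_i|}\int_{Q_i}|f|\lesssim M_{V,\eta}f(x)$ with $x$ fixed and $Q_i$ arbitrary does not follow from this argument.

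For what it is worth, the paper's proof leaves the same step unaddressed: it writes $\int_{Q_i}|f(y)|\,dy\le|Q_i|M_{V,\eta}f(x)$ directly, relying on the observation $\Psi(Q_i)\sim 1$, which is only useful when the evaluation point lies \emph{inside} $Q_i$. So you have put your finger on a genuine weak spot in the source, but the enveloping-ball repair does not fill it, precisely because the $\Psi$-weight in $M_{V,\eta}$ penalizes the large balls one would need to connect $x$ to a distant critical cube.
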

\begin{proof} It is sufficient to consider two cases.

{\it Case 1:} $r<1/m_{\mathit{V}}(x_{0})$.  Since $r<1/m_{\mathit{V}}(x_{0})$ implies $\Psi(B)\sim 1$, this case is easy to handle and we omit the details.

{\it Case 2:} $r\geq 1/m_{\mathit{V}}(x_{0})$. Using Lemma $\ref{ball_mean}$, there exist finite disjoint cubes $Q_{i}(x_{i},r_{i}), i=1,\ldots,m,$ such that
$$\begin{cases}
\int_{B}|f(y)|dy\leq\sum\limits_{i=1}^{m}\int_{Q_{i}}|f(y)|dy,\\
|B|\leq\sum\limits_{i=1}^{m}|Q_{i}|\leq(2\sqrt{d})^{d}|B|,\\
{r_{i}}/{2}\leq{1}/{m_{\mathit{V}}(x_{i})}\leq2\sqrt{d}C_{0}r_{i}.
\end{cases}$$
Note that $r_{i}<C/m_{\mathit{V}}(x_{i})$ implies $\Psi(Q_{i})\sim1$. For $x\in B$, we then have
$$
\int_{B}|f(y)|dy\leq\sum_{i=1}^{m}|Q_{i}|M_{\mathit{V},\eta}f(x)\leq (2\sqrt{d})^{d}|B|M_{\mathit{V},\eta}f(x).$$
This finished the proof.
\end{proof}

\begin{proof}[Proof of Theorem $\ref{weighted_maximal_Ti}$]
We first prove ${\rm(i)}$. Take
$$T_{1,r}f(x)=\int_{|x-y|>r}K_{1}(x,y)f(y)dy.$$
For $B=B(x,r/16)$, we   divide $f$ as  $f=f_{1}+f_{2}$, where $f_{1}:=f\chi_{16B}$. It follows from Lemma $\ref{ball_mean}$ and Lemma $\ref{weighted_Ti}$ (i) with $w=1$ that
\begin{align*}
&|T_{1,r}f(x)|=\frac{1}{|B|}\int_{B}|T_{1,r}f(x)|dy\\
&\leq \frac{1}{|B|}\int_{B}|T_{1}f(y)|dy+ \frac{1}{|B|}\int_{B}|T_{1}f_{1}(y)|dy+\frac{1}{|B|}\int_{B}|T_{1}f_{2}(y)-T_{1,r}f(x)|dy\\
&\leq M_{\mathit{V},\eta}(T_{1}f)(x)+\frac{1}{|B|^{1/q^{\prime}}}\|T_{1}f_{1}\|_{L^{q^{\prime}}}+\frac{1}{|B|}\int_{B}|T_{1}f_{2}(y)-T_{1,r}f(x)|dy\\
&\leq M_{\mathit{V},\eta}(T_{1}f)(x)+C\left(\frac{1}{|B|}\int_{B}|f(y)|^{q^{\prime}}dy\right)^{1/q^{\prime}}+\frac{1}{|B|}\int_{B}|T_{1}f_{2}(y)-T_{1,r}f(x)|dy\\
&\leq M_{\mathit{V},\eta}(T_{1}f)(x)+C\left(M_{\mathit{V},\eta}(|f|^{q^{\prime}})\right)^{1/q^{\prime}}+\frac{1}{|B|}\int_{B}|T_{1}f_{2}(y)-T_{1,r}f(x)|dy.
\end{align*}
For the third term in the last inequaliy, we have
\begin{align*}
&\frac{1}{|B|}\int_{B}|T_{1}f_{2}(y)-T_{1,r}f(x)|dy\\
&=\frac{1}{|B|}\int_{B}\left|\int_{(16)^{c}}K_{1}(y,\xi)f(\xi)d\xi-\int_{|x-\xi|>r}K_{1}(x,\xi)f(\xi)d\xi\right|dy\\
&\leq\frac{1}{|B|}\int_{B}\left(\int_{|x-\xi|>r}|K_{1}(y,\xi)-K_{1}(x,\xi)||f(\xi)|d\xi\right)dy\\
&=:\frac{1}{|B|}\int_{B}I_{1}(y)dy.
\end{align*}
Now, set $h=|y-x|$. Since $|y-x|<r/16<|x-\xi|/16$ for $y\in B$,  by $\eqref{K_{1}_property2}$ in Lemma $\ref{T_i kernel}$, we can obtain that for $l=\theta\eta/q^{\prime}+K$,
\begin{align*}
&I_{1}(y)\leq \sum_{k=0}^{\infty}\int_{2^{k}r<|x-\xi|\leq2^{k+1}r}\frac{C_{l}}{(1+|x-\xi|m_{\mathit{V}}(x))^{l}}\frac{|y-x|^{\delta}}{|x-\xi|^{d-2+\delta}}\mathit{V}(\xi)|f(\xi)|d\xi\\
&\leq \sum_{k=0}^{\infty}\frac{C_{l}\frac{r^{\delta}}{(2^{k}r)^{d-2+\delta}}}{(1+m_{\mathit{V}}(x)2^{k}r)^{l}}\left(\int_{|x-\xi|\leq 2^{k+1}r}\mathit{V}^{q}(\xi)d\xi\right)^{1/q}\left(\int_{|x-\xi|\leq 2^{k+1}r}|f(\xi)|^{q^{\prime}}d\xi\right)^{1/q^{\prime}}\\
&\leq C\sum_{k=0}^{\infty}\frac{C_{l}(M_{\mathit{V},\eta}(|f|^{q^{\prime}})(x))^{1/q^{\prime}}}{(1+m_{\mathit{V}}(x)2^{k}r)^{K}}\frac{r^{\delta}}{(2^{k}r)^{d-2+\delta}}\left(\int_{B(x,2^{k})}V(\xi)d\xi\right)\\
&\leq C(M_{\mathit{V},\eta}(|f|^{q^{\prime}})(x))^{1/q^{\prime}}\sum_{k=0}^{\infty}\frac{r^{\delta}}{(2^{k}r)^{d-2+\delta}}(2^{k}r)^{d-2}\\
&\leq C(M_{\mathit{V},\eta}(|f|^{q^{\prime}})(x))^{1/q^{\prime}}.
\end{align*}
Here we have used $\eqref{V_property_2}$ for $R=2^{k+1}r$, and $\eqref{V_property_3}$ in Lemma $\ref{estimate_V}$ for $R=2^{k}r$. The estimate for $I_{1}(y)$ implies that
$$\frac{1}{|B|}\int_{B}I_{1}(y)dy\leq C(M_{\mathit{V},\eta}(|f|^{q^{\prime}})(x))^{1/q^{\prime}} $$
and
$$T_{1,{\rm Max}}f(x)\leq M_{\mathit{V},\eta}(T_{1}f)(x)+C\left(M_{\mathit{V},\eta}(|f|^{q^{\prime}})\right)^{1/q^{\prime}}.$$
Hence, using  Lemma $\ref{A_p property}$ $(\rm i)$, Lemma $\ref{maximal_bound}$ and  Lemma $\ref{weighted_Ti}$ $(\rm i)$, we have
$$\|T_{1,{\rm Max}}\|_{L^{p}(w)}\leq \|M_{\mathit{V},\eta}(T_{1}f)\|_{L^{p}(w)}+ C\|(M_{\mathit{V},\eta}(|f|^{q^{\prime}})(x))^{1/q^{\prime}}\|_{L^{p}(w)}\leq C\|f\|_{L^{p}(w)}
$$
for $p>q^{\prime}>1$. This finishes the proof of $(\rm i)$.

Similar to $(\rm i)$,  $(\rm ii)$ can be  obtained easily. We omit the details.

It remains to handle the  maximal operator $T_{3,{\rm Max}}$. For any $x$, let
$$T_{3,r}=\int_{|x-y|>r}K_{3}(x,y)f(y)dy.$$
For $B=B(x,r/16)$, we split  $f=f_{1}+f_{2}$, where $f_{1}=f\chi_{16B}$. Similarly, we can obtain
$$T_{3,r}f(x)\leq M_{\mathit{V},\eta}(T_{3}f)(x)+C\left(M_{\mathit{V},\eta}(|f|^{q^{\prime}})\right)^{1/q^{\prime}}+\frac{1}{|B|}\int_{B}I_{3}(y)dy,$$
where $I_{3}(y)$ denotes the following integral:
$$I_{3}(y)=\int_{|x-\xi|>r}|K_{3}(y,\xi)-K_{3}(x,\xi)||f(\xi)|d\xi.$$
Since $y\in B$ and $h=|y-x|<r/16<|x-\xi|$, we deduce from $\eqref{K_{3}_property2}$ that $I_{3}(y)\lesssim I_{3,1}(x)+I_{3,2}(x)$, where
$$\left\{\begin{aligned}
I_{3,1}(x)&:= \int_{|x-\xi|>r}\frac{\frac{|r|^{\delta}}{|x-\xi|^{d-1+\delta}}}{(1+|x-\xi|m_{\mathit{V}}(x))^{l}}\left(\int_{B(\xi,|x-\xi|)}
\frac{\mathit{V}(u)}{|\xi-u|^{d-1}}du\right)|f(\xi)|d\xi;\\
I_{3,2}(x)&:=\int_{|x-\xi|>r}\frac{1}{(1+|x-\xi|m_{\mathit{V}}(x))^{l}}\frac{|r|^{\delta}}{|x-\xi|^{d+\delta}}|f(\xi)|d\xi.
\end{aligned}\right.$$

For  $I_{3,2}(x)$, we have
\begin{align*}
I_{3,2}(x)&\lesssim r^{\delta}\sum_{k=0}^{\infty}\int_{2^{k}r<|x-\xi|\leq 2^{k+1}r}\frac{1}{(1+m_{\mathit{V}}(x)2^{k}r)^{\eta\theta/p_{0}^{\prime}}}\frac{1}{(2^{k}r)^{d+\delta}}d\xi\\
&\lesssim r^{\delta}\sum_{k=0}^{\infty}\frac{1}{(2^{k}r)^{\delta}}(M_{\mathit{V},\eta}(|f|^{p_{0}^{\prime}})(x))^{1/p_{0}^{\prime}}\\
&\lesssim (M_{\mathit{V},\eta}(|f|^{p_{0}^{\prime}})(x))^{1/p_{0}^{\prime}}.
\end{align*}
Since $|u-\xi|<|x-\xi|$ yields $|x-u|\leq |x-\xi|+|\xi-u|<2|x-\xi|$, we can apply  H\"{o}lder's inequality and the Hardy-Littlewood-Sobolev inequality with $\frac{1}{p_{0}}=\frac{1}{q}-\frac{1}{d}$ to obtain
\begin{align*}
&I_{3,1}(x)\leq \sum_{k=0}^{\infty}\int_{2^{k}r<|x-\xi|<2^{k+1}r}\frac{C_{l}\frac{|r|^{\delta}}{(2^{k}r)^{d-1+\delta}}}{(1+m_{\mathit{V}}(x)2^{k}r)^{l}}\left(\int_{B(x,2^{k+2}r)}\frac{\mathit{V}(u)}{|\xi-u|^{d-1}}du\right)|f(\xi)|d\xi\\
&\leq \sum_{k=0}^{\infty}\frac{C_{l}\frac{|r|^{\delta}}{(2^{k}r)^{d-1+\delta}}}{(1+m_{\mathit{V}}(x)2^{k}r)^{l}}\left\|\int_{\mathbb{R}^{d}}\frac{\mathit{V}(u)\chi_{B(x,2^{k+2}r)}}{|\cdot-u|^{d-1}}du\right\|_{L^{p_{0}}} \left(\int_{B(x,2^{k+1}r)}|f(\xi)|^{p_{0}^{\prime}}d\xi\right)^{1/p_{0}^{\prime}}\\
&\leq C \sum_{k=0}^{\infty}\frac{(M_{\mathit{V},\eta}(|f|^{p_{0}^{\prime}})(x))^{1/p_{0}^{\prime}}}{(1+m_{\mathit{V}}(x)2^{k}r)^{K}}\frac{|r|^{\delta}}{(2^{k}r)^{d-1+\delta}}\left(\int_{B(x,2^{k+2}r)}\mathit{V}^{q}(\xi)d\xi\right)^{1/q}(2^{k+1}r)^{d/p_{0}^{\prime}}\\
&\leq C \sum_{k=0}^{\infty}\frac{(M_{\mathit{V},\eta}(|f|^{p_{0}^{\prime}})(x))^{1/p_{0}^{\prime}}}{(1+m_{\mathit{V}}(x)2^{k}r)^{K}}\frac{|r|^{\delta}(2^{k+2}r)^{d/q-n}}{(2^{k}r)^{d-1+\delta}}(2^{k+1}r)^{d/p_{0}^{\prime}}\left(\int_{B(x,2^{k+2}r)}\mathit{V}(\xi)d\xi\right)\\
&\leq C(M_{\mathit{V},\eta}(|f|^{p_{0}^{\prime}})(x))^{1/p_{0}^{\prime}}\sum_{k=0}^{\infty}\frac{|r|^{\delta}}{(2^{k}r)^{d-1+\delta}}(2^{k}r)^{d/q-d+(d/p_{0}^{\prime})+d-2}\\
&\leq C(M_{\mathit{V},\eta}(|f|^{p_{0}^{\prime}})(x))^{1/p_{0}^{\prime}}.
\end{align*}
Here we have used the fact that $d/q-d+(d/p_{0}^{\prime})+d-2=d-1$ and $1/p_{0}=1/q-1/d$. Thus, by a similar manner as the case $(\rm i)$, we obtain the desired result. This completes the proof of Lemma $\ref{weighted_maximal_Ti}$.
\end{proof}

Finally, we continue to investigate the Littlewood-Paley functions related to Schr\"{o}dinger operators. We first introduce some notations. For $(x,t)\in\mathbb{R}_{+}^{d+1}= \mathbb{R}^{d}\times (0,\infty)$, let $T_{s}=e^{-s\mathcal{L}}$  and
$$(Q_{t}f)(x)=t^{2}\left(\frac{dT_{s}}{ds}\bigg|_{s=t^{2}}f\right)(x).$$
The Littlewood-Paley $g$-function $g_{Q}$ and the area function $S_{Q}$ related to Schr\"{o}dinger operators (cf. \cite{benyi2,dziubanski1,tang2,tang3}) are defined by
\begin{equation}\label{g_function}
g_{Q}(f)(x):=\left(\int_{0}^{\infty}|Q_{t}(f)(x)|^{2}\frac{dt}{t}\right)^{{1}/{2}}
\end{equation}
and
\begin{equation}\label{area_function}
S_{Q}(f)(x):=\left(\int_{0}^{\infty}\int_{|x-y|<t}|Q_{t}(f)(y)|^{2}\frac{dydt}{t^{d+1}}\right)^{{1}/{2}}.
\end{equation}
In \cite{bongioanni2,tang3}, the authors proved that the wejghted bounddedness of $g_{Q}$ and $S_{Q}$, respectively.
\begin{lemma}\label{little-paley}
Let $1<p<\infty$. If $w\in A_{p}^{\rho}$, then there exists a constant $C$ such that
$$
\|g_{Q}(f)\|_{L^{p}(w)}\leq C\|f\|_{L^{p}(w)}\,\,\,\text{and}\,\,\,\|S_{Q}(f)\|_{L^{p}(w)}\leq C|f\|_{L^{p}(w)}.
$$
\end{lemma}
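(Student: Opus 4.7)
The plan is to follow the vector-valued Calder\'on--Zygmund strategy adapted to the Schr\"odinger setting, which is the approach taken in \cite{bongioanni2,tang3}. The starting point is unweighted $L^2$-boundedness. Since $\mathcal{L}=-\Delta+\mathit{V}$ is nonnegative and self-adjoint on $L^{2}(\mathbb{R}^{d})$, the spectral theorem applied to the function $\lambda\mapsto \lambda e^{-t^{2}\lambda}$ yields $\int_{0}^{\infty}|Q_{t}f(x)|^{2}\,dt/t\,dx=c\|f\|_{L^{2}}^{2}$, giving $\|g_{Q}f\|_{L^{2}}\lesssim\|f\|_{L^{2}}$. Fubini on the cone $\{|x-y|<t\}$ reduces $\|S_{Q}f\|_{L^{2}}$ to a multiple of $\|g_{Q}f\|_{L^{2}}$, so both operators are $L^{2}$-bounded.

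Next I would derive pointwise kernel estimates for $Q_{t}$. Writing $Q_{t}f(x)=\int q_{t}(x,y)f(y)\,dy$, one combines standard Dziuba\'nski--Zienkiewicz-type heat-kernel bounds with the identity $Q_{t}=t^{2}\mathcal{L}e^{-t^{2}\mathcal{L}}$ and a perturbation expansion of $e^{-s\mathcal{L}}$ around the free heat semigroup. Using Lemmas \ref{m_{v} property_1} and \ref{estimate_V} to absorb the potential, one obtains for every $N>0$
\begin{equation*}
|q_{t}(x,y)|\leq C_{N}\,t^{-d}\Bigl(1+\tfrac{t}{\rho(x)}+\tfrac{t}{\rho(y)}\Bigr)^{-N}e^{-c|x-y|^{2}/t^{2}},
\end{equation*}
together with a H\"older estimate in the $x$-variable on scales $|x-x'|\leq t/2$ of matching type. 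These are the natural $\rho$-analogs of \eqref{kernel_T1} and \eqref{kernel_T2} for the Hilbert-space-valued kernel.

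Third, I would view $g_{Q}$ as a vector-valued singular integral with values in $H=L^{2}((0,\infty),\,dt/t)$. The estimates above translate into $\|q_{\cdot}(x,y)\|_{H}\lesssim(1+|x-y|/\rho(x))^{-N}|x-y|^{-d}$ plus the corresponding H\"older bound, which are exactly the Schr\"odinger--Calder\'on--Zygmund conditions compatible with the class $A_{p}^{\rho}$. The weighted theory (the $A_{p}^{\rho}$-analog of the Muckenhoupt--Wheeden extrapolation, used in the same spirit as Lemmas \ref{maximal_bound}--\ref{weighted_commutator_T} and Theorem \ref{weighted_maximal_Ti}) then yields $\|g_{Q}f\|_{L^{p}(w)}\leq C\|f\|_{L^{p}(w)}$ for all $1<p<\infty$ and $w\in A_{p}^{\rho}$. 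For $S_{Q}$, one either reinterprets $S_{Q}$ as a vector-valued Calder\'on--Zygmund operator into $L^{2}(\Gamma,\,dy\,dt/t^{d+1})$ over the cone (the same heat-kernel estimates suffice to check the Schr\"odinger--CZ conditions), or notes that standard cone-to-$g$ comparison together with Fubini reduces the weighted estimate to that of $g_{Q}$.

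The main obstacle is the derivation of the pointwise kernel estimates for $q_{t}(x,y)$ with the $\rho$-decay factors; this is the place where the reverse H\"older hypothesis $\mathit{V}\in B_{q}$, the bound \eqref{V_property_2}, and the quantitative comparability of $m_{\mathit{V}}(x)$ and $m_{\mathit{V}}(y)$ from \eqref{V_property_1} are all used simultaneously, via a Duhamel expansion of $e^{-s\mathcal{L}}$. Once these bounds are available, the passage to weighted $L^{p}$ is standard and parallels the proof of Theorem \ref{weighted_maximal_Ti}.
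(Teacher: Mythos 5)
The paper does not give an independent proof of this lemma; it is cited directly from \cite{bongioanni2} (for $g_Q$) and \cite{tang3} (for $S_Q$). Your reconstruction follows the same vector-valued Calder\'on--Zygmund route that those references (and the rest of this paper) take, so in spirit there is no conflict. Two small remarks. First, the step you single out as the ``main obstacle'' --- the Gaussian kernel bounds with $\rho$-decay for $q_t(x,y)=Q_t(x,y)$, together with the matching H\"older estimate --- is not something you need to re-derive via a Duhamel expansion: it is already recorded in the paper as Lemma \ref{kernal_property}, quoted from Dziuba\'nski et al.\ \cite{dziubanski1}. Second, the reduction from $S_Q$ to $g_Q$ at the $L^2$ level by Fubini over the cone is correct, but for the weighted $L^p$ case you cannot just invoke a ``standard cone-to-$g$ comparison''; the two operators are genuinely different and the paper (via Lemma \ref{estimate_vector} for $S_Q$ and Claim \ref{g_kernel} for $g_Q$) treats them as two separate vector-valued Calder\'on--Zygmund operators, exactly as in your first alternative. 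With that choice made explicit, the argument you sketch --- $L^2$ isometry from the spectral theorem for $\lambda\mapsto\lambda e^{-t^2\lambda}$, kernel estimates from Lemma \ref{kernal_property}, then the $A_p^\rho$-weighted singular integral theory used throughout the paper (cf.\ Lemma \ref{maximal_bound} and the proof of Theorem \ref{weighted_maximal_Ti}) --- is a faithful outline of the cited proofs.
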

The commutators of $g_{Q}$ and $S_{Q}$ with $b\in {\rm BMO}(\rho)$ are defined by
$$g_{Q,b}(f)(x)=\left(\int_{0}^{\infty}|Q_{t}((b(x)-b(\cdot))f)(x)|^{2}\frac{dt}{t}\right)^{{1}/{2}}$$
and
$$S_{Q,b}(f)(x)=\left(\int_{0}^{\infty}\int_{|x-y|<t}|Q_{t}((b(x)-b(\cdot))f)(y)|^{2}\frac{dydt}{t^{d+1}}\right)^{{1}/{2}}.$$

The following lemma contains weighted norm inequalities for the commutators $g_{Q, b}$ and $S_{Q,b}$. This results can be found in \cite{tang2,tang3}.
\begin{lemma}\label{little-paley_commutators}
	Let $b\in{\rm BMO}(\rho)$ and $1<p<\infty$. If $w\in A_{p}^{\rho}$, then there exists a constant $C$ such that
	$$
	\|g_{Q, b}(f)\|_{L^{p}(w)}\leq C\|b\|_{{\rm BMO}(\rho)}\|f\|_{L^{p}(w)}\,\,\,\text{and}\,\,\,\|S_{Q,b}(f)\|_{L^{p}(w)}\leq C\|b\|_{{\rm BMO}(\rho)}\|f\|_{L^{p}(w)}.
	$$
\end{lemma}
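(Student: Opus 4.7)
The plan is to handle both commutators simultaneously by recasting $g_Q$ and $S_Q$ as vector-valued linear operators and then invoking a Coifman--Rochberg--Weiss type complex-interpolation argument whose scalar input is exactly Lemma \ref{little-paley}.

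First I would reinterpret the square functions vector-valuedly. Set $\mathcal{H}_1 := L^2((0,\infty), dt/t)$ and $\mathcal{H}_2 := L^2(\mathbb{R}_+^{d+1}, dy\,dt/t^{d+1})$, and define the $\mathcal{H}_1$-valued linear operator $\vec{Q}f(x) := (Q_t f(x))_{t>0}$ together with the $\mathcal{H}_2$-valued operator $\vec{S}f(x) := (Q_t f(y)\chi_{|x-y|<t})_{(y,t)}$, so that $g_Q(f)(x) = \|\vec{Q}f(x)\|_{\mathcal{H}_1}$, $S_Q(f)(x) = \|\vec{S}f(x)\|_{\mathcal{H}_2}$, and correspondingly $g_{Q,b}(f)(x) = \|[b,\vec{Q}]f(x)\|_{\mathcal{H}_1}$, $S_{Q,b}(f)(x) = \|[b,\vec{S}]f(x)\|_{\mathcal{H}_2}$, where $[b,\vec{Q}]f := b\,\vec{Q}f - \vec{Q}(bf)$ and similarly for $\vec{S}$. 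Lemma \ref{little-paley} then lifts immediately to the vector-valued bounds $\|\vec{Q}f\|_{L^p(w;\mathcal{H}_1)}\lesssim\|f\|_{L^p(w)}$ and $\|\vec{S}f\|_{L^p(w;\mathcal{H}_2)}\lesssim\|f\|_{L^p(w)}$.

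For $z\in\mathbb{C}$ I would consider the analytic families of conjugations $M_z^{(1)}f := e^{zb}\vec{Q}(e^{-zb}f)$ and $M_z^{(2)}f := e^{zb}\vec{S}(e^{-zb}f)$. Each is an $\mathcal{H}_i$-valued linear operator satisfying $\frac{d}{dz}\big|_{z=0}M_z^{(i)}f = [b,\vec{Q}]f$ (resp.\ $[b,\vec{S}]f$), so Cauchy's integral formula on a circle $|z|=\varepsilon$ yields
\begin{equation*}
\|[b,\vec{Q}]f(x)\|_{\mathcal{H}_1} \leq \frac{1}{2\pi\varepsilon}\int_0^{2\pi} \|M_{\varepsilon e^{i\theta}}^{(1)}f(x)\|_{\mathcal{H}_1}\,d\theta.
\end{equation*}
Taking the $L^p(w)$ norm in $x$, applying Minkowski's inequality, and using the identity $\|e^{zb}h\|_{L^p(w)} = \|h\|_{L^p(e^{p\,\mathrm{Re}(z)b}w)}$, the right-hand side is controlled by
\begin{equation*}
\frac{1}{2\pi\varepsilon}\int_0^{2\pi} \|g_Q(e^{-\varepsilon e^{i\theta}b}f)\|_{L^p(w_\theta)}\,d\theta, \qquad w_\theta := e^{p\varepsilon\cos\theta\,b}w,
\end{equation*}
and applying Lemma \ref{little-paley} to each shifted weight $w_\theta$ produces a scalar bound in terms of $\|f\|_{L^p(w)}$.

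The central technical obstacle, and where the Schr\"{o}dinger-specific theory enters, is the stability of $A_p^\rho$ under the perturbation $w\mapsto e^{\alpha b}w$ when $b\in\mathrm{BMO}(\rho)$ and $|\alpha|\leq \varepsilon_0/\|b\|_{\mathrm{BMO}(\rho)}$. In the classical setting this follows from the John--Nirenberg inequality; here one needs its $\mathrm{BMO}(\rho)$ analogue, asserting that for every ball $B$ and $\lambda>0$,
\begin{equation*}
|\{x\in B : |b(x)-b_B|>\lambda\}| \lesssim \Psi(B)^{c_1}|B|\exp(-c_2\lambda/\|b\|_{\mathrm{BMO}(\rho)}),
\end{equation*}
which is available in the $\mathrm{BMO}(\rho)$ literature. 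Combined with $\Psi(2B)\leq 2^\theta\Psi(B)$, this controls the $A_p^\rho$ constant of $w_\theta$ uniformly in $\theta$; choosing $\varepsilon\sim 1/\|b\|_{\mathrm{BMO}(\rho)}$ then produces $\|g_{Q,b}f\|_{L^p(w)}\lesssim\|b\|_{\mathrm{BMO}(\rho)}\|f\|_{L^p(w)}$, and the argument for $S_{Q,b}$ is verbatim with $\vec{S}$ and $\mathcal{H}_2$ replacing $\vec{Q}$ and $\mathcal{H}_1$.
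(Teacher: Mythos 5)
The paper does not prove Lemma \ref{little-paley_commutators}; it simply cites the result from the references to Tang (\cite{tang2,tang3}), where it is established via pointwise sharp-maximal / $M_{\mathit{V},\eta}$-function estimates in the same spirit as the rest of Section \ref{sec2}. Your proposal takes a genuinely different route, the Coifman--Rochberg--Weiss conjugation and Cauchy-integral trick. That trick is clean in the classical $\mathrm{BMO}$/$A_p$ setting, but as written it contains a gap that is specific to the Schr\"odinger scales and that, to my knowledge, cannot be repaired without essentially abandoning the conjugation strategy.

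The issue is the John--Nirenberg input. You invoke
\begin{equation*}
|\{x\in B : |b(x)-b_B|>\lambda\}| \lesssim \Psi(B)^{c_1}\,|B|\,\exp\bigl(-c_2\lambda/\|b\|_{\mathrm{BMO}(\rho)}\bigr),
\end{equation*}
i.e.\ full exponential decay in $\lambda$ uniformly over balls, with the $\Psi(B)$ loss only in the multiplicative prefactor. That is stronger than what $\mathrm{BMO}(\rho)$ actually supports. The correct John--Nirenberg inequality for $\mathrm{BMO}(\rho)$ (Bongioanni--Harboure--Salinas, Proposition~3 of the paper cited here as \cite{bongioanni1}) places the loss in the \emph{exponent}: for some $\theta'>\theta$,
\begin{equation*}
|\{x\in B(x_0,r) : |b(x)-b_B|>\lambda\}| \lesssim |B|\,\exp\Bigl(-\,\frac{c_2\lambda}{\|b\|_{\mathrm{BMO}(\rho)}(1+r/\rho(x_0))^{\theta'}}\Bigr).
\end{equation*}
With this, the exponential moment $\frac{1}{|B|}\int_B e^{\alpha|b-b_B|}\,dx$ is finite only for $|\alpha|\lesssim 1/(\|b\|_{\mathrm{BMO}(\rho)}\Psi(B)^{\theta'/\theta})$, a threshold that degenerates as $\Psi(B)\to\infty$. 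Consequently, for any fixed $\varepsilon>0$, no matter how small, there are balls on which $e^{\alpha(b-b_B)}$ fails to be integrable for $|\alpha|\le\varepsilon$, and the prefactor $\Psi(B)^{-p}$ in the $A_p^\rho$ condition (a polynomial gain) cannot compensate an exponential blow-up. So the claimed uniform-in-$\theta$ membership $w_\theta=e^{p\varepsilon\cos\theta\, b}w\in A_p^\rho$ does not follow, and without it the application of Lemma \ref{little-paley} with the shifted weight $w_\theta$ breaks down.

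Everything up to that point --- the vector-valued reformulation, the identification $g_{Q,b}f=\|[b,\vec Q]f\|_{\mathcal H_1}$ and $S_{Q,b}f=\|[b,\vec S]f\|_{\mathcal H_2}$, the Cauchy-integral representation, the change of weight $\|e^{zb}h\|_{L^p(w)}=\|h\|_{L^p(w_\theta)}$ --- is fine and would indeed reduce the theorem to the stability of $A_p^\rho$ under small exponential perturbations. But that stability is precisely the point where $\mathrm{BMO}(\rho)$ differs from classical $\mathrm{BMO}$, and it fails. This is why the literature (and the implicit proof in \cite{tang2,tang3} that this paper relies on) instead proves the commutator bound through a good-$\lambda$/sharp-maximal estimate controlling $[b,T]f$ pointwise by $\|b\|_{\mathrm{BMO}(\rho)}$ times suitable $M_{\mathit{V},\eta}$-maximal functions of $Tf$ and of $|f|^{s}$, and then invokes Lemma \ref{maximal_bound}; that route never needs to exponentiate $b$.
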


In \cite{dziubanski1}, the autors introduce some properties for the integral kernnel $Q_{t}(x,y)$ of the operators $Q_{t}$ in $\eqref{g_function}$ and $\eqref{area_function}$.
\begin{lemma}\label{kernal_property}
	There exist positive constants $c$ and $\delta_{0}\leq 1$ such that for any $l\geq0$ there is a constant $C_{l}$ so that the following inequalities hold:
		\item{\rm (i)} $|Q_{t}(x,y)|\leq C_{l}t^{-n}\left(1+\frac{t}{\rho(x)}+\frac{t}{\rho(y)}\right)^{-l}\exp\left(-{c|x-y|^{2}}/{t^{2}}\right)$,
		\item{\rm (ii)} $|Q_{t}(x+h,y)-Q_{t}(x,y)|\leq C_{l}\left({|h|}/{|t|}\right)^{\delta_{0}}t^{-n}\left(1+\frac{t}{\rho(x)}+\frac{t}{\rho(y)}\right)^{-l}\exp\left(-{c|x-y|^{2}}/{t^{2}}\right)$
		for all $|h|\leq t$.
\end{lemma}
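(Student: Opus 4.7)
The plan is to translate everything to statements about the heat kernel $p_{s}^{\mathcal{L}}(x,y)$ of the semigroup $T_{s}=e^{-s\mathcal{L}}$, since by definition
\[
Q_{t}(x,y) \;=\; t^{2}\,\partial_{s}\,p_{s}^{\mathcal{L}}(x,y)\Big|_{s=t^{2}}.
\]
Accordingly, (i) and (ii) will follow from corresponding pointwise bounds for $\partial_{s}p_{s}^{\mathcal{L}}$ and for its spatial increments, evaluated at $s=t^{2}$ (the factor $t^{2}$ exactly absorbs the unavoidable $s^{-1}=t^{-2}$ loss that a time derivative of a heat kernel produces).

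The first step is a Gaussian upper bound $0\leq p_{s}^{\mathcal{L}}(x,y)\leq C s^{-d/2}\exp(-c|x-y|^{2}/s)$, which is immediate from the Feynman--Kac formula once $V\geq 0$, since domination by the free heat kernel is built into the representation. The second, and technically more demanding, step is to insert the decay factor $(1+s^{1/2}/\rho(x)+s^{1/2}/\rho(y))^{-l}$ for any prescribed $l$. The starting point is the perturbation (Duhamel) identity
\[
p_{s}^{\mathcal{L}}(x,y) \;=\; p_{s}^{\Delta}(x,y)-\int_{0}^{s}\!\!\int_{\mathbb{R}^{d}} p_{s-u}^{\mathcal{L}}(x,z)\,V(z)\,p_{u}^{\Delta}(z,y)\,dz\,du,
\]
iterated $N$ times. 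Each factor of $V$ integrated against a Gaussian is controlled, on annular regions of radius $\sim 2^{k}s^{1/2}$, through the reverse Hölder integral bound of Lemma \ref{estimate_V}, which provides a gain of the form $(1+s^{1/2}m_{V})^{-K}$ per insertion. Iterating and using the comparability $m_{V}(x)\sim m_{V}(y)$ whenever $|x-y|\leq C/m_{V}(x)$ together with Lemma \ref{m_{v} property_1}(i) to pass between $m_{V}(x)$ and $m_{V}(y)$ modulo a Gaussian loss, one gets for every $l\geq 0$
\[
p_{s}^{\mathcal{L}}(x,y)\;\leq\; C_{l}\,s^{-d/2}\Bigl(1+\tfrac{s^{1/2}}{\rho(x)}+\tfrac{s^{1/2}}{\rho(y)}\Bigr)^{-l}\exp\Bigl(-c|x-y|^{2}/s\Bigr).
\]

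For (i), apply the Cauchy integral formula to the holomorphic family $s\mapsto p_{s}^{\mathcal{L}}(x,y)$ (or equivalently the identity $\partial_{s}p_{s}=\mathcal{L}p_{s}=-\mathcal{L}^{*}p_{s}$ combined with the semigroup decomposition $p_{s}=p_{s/2}\circ p_{s/2}$) to upgrade the pointwise size bound to the bound $|\partial_{s}p_{s}^{\mathcal{L}}(x,y)|\leq C\,s^{-1}\cdot[\text{same pointwise majorant}]$; multiplying by $t^{2}=s$ and setting $s=t^{2}$ yields (i). For (ii), one first obtains a gradient estimate $|\nabla_{x}p_{s}^{\mathcal{L}}(x,y)|\leq C s^{-1/2}\cdot[\text{same majorant}]$ by a Caccioppoli/Moser argument on the caloric function $p_{s}^{\mathcal{L}}(\cdot,y)$ (using $V\geq 0$ and the reverse Hölder class for the cut-off step); differentiating the identity for $Q_{t}$ and integrating along the segment from $x$ to $x+h$ then produces a Lipschitz-type increment. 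Interpolating the resulting Lipschitz bound (which is the case $\delta_{0}=1$, valid for $|h|\leq t$) with the trivial $L^{\infty}$ bound from (i) yields the Hölder estimate (ii) for any $\delta_{0}\leq 1$.

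The principal obstacle is the polynomial gain $(1+t/\rho(x)+t/\rho(y))^{-l}$ with $l$ arbitrarily large: ordinary Gaussian bounds from Feynman--Kac give no such gain. One must iterate the Duhamel expansion and, at each stage, convert integrals of $V$ against heat-kernel weights into powers of $(1+s^{1/2}m_{V})^{-1}$ through the reverse Hölder bound of Lemma \ref{estimate_V}, then patch $m_{V}(x)$ and $m_{V}(y)$ using Lemma \ref{m_{v} property_1}. Once the size bound with the $\rho$-decay is in hand, the passage to time and space derivatives is standard and the factor $t^{2}$ in the definition of $Q_{t}$ exactly cancels the $s^{-1}$ cost of $\partial_{s}$.
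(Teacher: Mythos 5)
The paper offers no proof of Lemma~\ref{kernal_property}; it is imported verbatim from Dziuba\'nski--Garrig\'os--Torrea--Zienkiewicz~\cite{dziubanski1}, so there is no internal argument to compare against. Your plan does follow the general contour of that literature (Feynman--Kac for the Gaussian bound, a perturbation identity for the extra $\rho$-decay, semigroup identities for the time derivative, parabolic regularity for the spatial increment), but the central mechanism you propose for the $\rho$-decay is stated backwards, and the step producing the H\"older exponent is underpowered.

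Lemma~\ref{estimate_V} reads $\int_{B(x,R)}V\leq C R^{d-2}(1+m_V(x)R)^{K}$: it is an \emph{upper} bound on the mass of $V$, giving polynomial \emph{growth} in $m_V(x)R$, not a ``gain of $(1+s^{1/2}m_V)^{-K}$ per insertion.'' Feeding it into the iterated Duhamel series only bounds the correction terms from above; since $p_s^{\mathcal{L}}\leq p_s^{\Delta}$ trivially when $V\geq 0$, an upper bound on the Duhamel defect $p_s^{\Delta}-p_s^{\mathcal{L}}=\int_0^s\!\int p^{\mathcal{L}}_{s-u}Vp^{\Delta}_{u}$ cannot shrink $p_s^{\mathcal{L}}$ below the Gaussian majorant when $\sqrt{s}\gg\rho(x)$. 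The decay actually comes from the opposite direction: the definition of $\rho$ forces $\rho(x)^{2-d}\int_{B(x,\rho(x))}V\sim 1$, which yields a \emph{lower} bound on $\int_0^{s}\!\int p_u^{\Delta}(x,z)V(z)\,dz\,du$ at scale $s\sim\rho(x)^{2}$, hence a uniform constant $<1$ multiplying $p_s^{\Delta}$; one then iterates over $\sim(\sqrt{s}/\rho(x))^{2}$ time steps via the semigroup law to get decay in $\sqrt{s}/\rho(x)$ that is even faster than any polynomial. This lower-bound-plus-iteration argument, not a reverse-H\"older insertion per Duhamel term, is the engine in~\cite{dziubanski2,dziubanski1}. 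Secondly, for $V\in B_q$ with $q>d/2$ one does not in general get a Lipschitz ($\delta_0=1$) estimate for $p_s^{\mathcal{L}}(\cdot,y)$ by a routine Caccioppoli step, and ``interpolating'' a true Lipschitz bound with (i) is in any case vacuous because $|h|/t\leq(|h|/t)^{\delta_0}$ already for $|h|\leq t$. The substantive task is producing \emph{some} $\delta_0>0$, which requires a parabolic De Giorgi--Nash--Moser/Kato-class argument for $\partial_s u=\Delta u - Vu$ exploiting $V\in B_q$, and the resulting $\delta_0$ generically depends on $d$ and $q$.
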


We define the space ${\bf B}=L^{2}(\mathbb{R}^{d+1},dydt/t^{d})$ to be the set of all measurable functions $a$: $\mathbb{R}_{+}^{d+1}\rightarrow \mathbb{C}$ endwoed the norm
$$|a|_{{\bf B}}=\left(\int_{\mathbb{R}_{+}^{d+1}}|a(y,t)|^{2}\frac{dydt}{t^{d}}\right)^{{1}/{2}}<\infty.$$

Let $\varphi$ be a nongegative infinitely differentiable function on $\mathbb{R}_{+}$ such that $\varphi(s)=1$ for $0<s<1$ and $\varphi(x)=0$ for $s\geq 2$. Then the function $\varphi_{t}(x,y):=\frac{1}{t}\varphi(\frac{|x-y|}{t})$ satisfies
$$|\varphi_{t}(x,y)-\varphi(x^{\prime,y})|\leq\frac{|x-x^{\prime}|}{t^{2}}\chi_{[0,2]}\left(\frac{\min\{|x-y|,|x^{\prime}-y|\}}{t}\right)$$
for $|x-y|>2|x-x^{\prime}|$.

Denote by   $\widetilde{K}(\cdot,\cdot)$ the  kernel defined as follows:
$$\widetilde{K}(x,z):=\left\{t^{1/2}\varphi_{t}(x,y)Q_{t}(y,z)\right\}_{(y,z)\in\mathbb{R}_{+}^{d+1}}.$$
It is easy to see that
\begin{equation}\label{kernel_inequality_1}
\left(\int_{0}^{\infty}\int_{|x-y|<t}|Q_{t}(y,z)|^{2}\frac{dydt}{t^{d+1}}\right)^{\frac{1}{2}}\leq |\widetilde{K}(x,z)|_{{\bf B}}.
\end{equation}
The following estimate of  $\widetilde{K}(\cdot,\cdot)$ was obtained by Tang et al. \cite{tang3}.
\begin{lemma}\label{estimate_vector}
	Let $\delta_{0}$ as same as in Lemma $\ref{kernal_property}$. Then for any $l$ we have
	$$|\widetilde{K}(x,z)|_{{\bf B}}\leq\frac{C_{l}}{(1+|x-z|(m_{\mathit{V}}(x)+m_{\mathit{V}}(y)))^{l}}\frac{1}{|x-z|^{d}}.$$
\end{lemma}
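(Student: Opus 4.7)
The plan is to expand the $\mathbf{B}$-norm directly and then split the $(y,t)$ integration into the short-time and long-time regimes relative to $|x-z|$.

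By the definition of $|\cdot|_{\mathbf{B}}$ and the support property $|\varphi_t(x,y)|\le t^{-1}\chi_{\{|x-y|\le 2t\}}$, one has
\begin{equation*}
|\widetilde{K}(x,z)|_{\mathbf{B}}^{2}
=\int_{0}^{\infty}\!\!\int_{\mathbb{R}^{d}}t\,\varphi_{t}(x,y)^{2}|Q_{t}(y,z)|^{2}\frac{dy\,dt}{t^{d}}
\;\lesssim\;\int_{0}^{\infty}\!\!\int_{|x-y|\le 2t}\frac{|Q_{t}(y,z)|^{2}}{t^{d+1}}\,dy\,dt.
\end{equation*}
Inserting Lemma \ref{kernal_property}(i) with parameter $2l$ in the $\rho$-decay factor, the integrand is dominated by
\[
C_{l}\,t^{-3d-1}\Bigl(1+\tfrac{t}{\rho(y)}+\tfrac{t}{\rho(z)}\Bigr)^{-2l}\exp\!\bigl(-2c|y-z|^{2}/t^{2}\bigr).
\]

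Next I would split the $t$-integral at $t_0=|x-z|/8$. In the \emph{short-time} region $t\le t_0$, the condition $|x-y|\le 2t$ forces $|y-z|\ge |x-z|/2$; the Gaussian factor then gives $\exp(-c|y-z|^{2}/t^{2})\le C_{N}(t/|x-z|)^{N}$ for any $N$, and integrating in $y$ over a ball of radius $2t$ (volume $\sim t^{d}$) yields a contribution bounded by $|x-z|^{-2d}(1+|x-z|m_{V}(z))^{-2l}$ after choosing $N$ large. In the \emph{long-time} region $t>t_0$, one has $t\gtrsim |x-z|$; using $(1+t/\rho(z))^{-2l}$ and integrating out the Gaussian in $y$ (which contributes a factor $t^{d}$), the time integral $\int_{|x-z|}^{\infty}t^{-2d-1}(1+t/\rho(z))^{-2l}\,dt$ produces $|x-z|^{-2d}(1+|x-z|m_{V}(z))^{-2l+d}$, so taking $l$ large enough absorbs the polynomial loss.

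To convert $m_{V}(z)$ into the symmetric factor $m_{V}(x)+m_{V}(z)$ that appears in the statement, I would invoke Lemma \ref{m_{v} property_1}(i): since $m_{V}(x)/m_{V}(z)\lesssim (1+|x-z|m_{V}(z))^{k_{0}/(k_{0}+1)}$, one can trade $(1+|x-z|m_{V}(z))^{-l'}$ for $(1+|x-z|(m_{V}(x)+m_{V}(z)))^{-l}$ at the cost of increasing $l'$, which is harmless. Taking square roots gives the required bound.

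The main technical obstacle will be bookkeeping the $\rho$-decay: the estimate in Lemma \ref{kernal_property} naturally involves $\rho(y)$ and $\rho(z)$, whereas the target bound involves $\rho(x)$ and $\rho(z)$. In the short-time regime $t\le t_0$ the localization $|x-y|\le 2t\lesssim \rho(x)$ (in the interesting range) gives $\rho(y)\sim\rho(x)$ via Lemma \ref{m_{v} property_1}(i), but in the long-time regime I must instead drop $\rho(y)$ and rely entirely on the $\rho(z)$-decay, before finally symmetrizing as above. Choosing the exponent $l$ in the application of Lemma \ref{kernal_property} sufficiently large relative to $l$ in the conclusion and to $k_{0}$ absorbs all intermediate losses.
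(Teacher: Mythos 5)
The paper does not actually prove Lemma~\ref{estimate_vector}; it is cited from Tang--Wang--Zhu~\cite{tang3}. So there is no ``paper's own proof'' to compare against literally. However, the paper does prove two closely analogous estimates by exactly the same mechanism: Claim~\ref{g_kernel} (the pointwise $g$-function kernel bound) and the H\"older-type bound~\eqref{S_kernel} for $K_{Q}(x,z)$. Your proposal is essentially the same strategy applied to the area-function kernel: invoke Lemma~\ref{kernal_property}(i), dominate the Gaussian by an arbitrary power of $t/(t+|x-z|)$, and split the $t$-integration at a multiple of $|x-z|$. The one genuinely new ingredient you need beyond the $g$-function case --- and which you identify correctly --- is the localization $|\varphi_t(x,y)|\le t^{-1}\chi_{\{|x-y|\le 2t\}}$, which reduces the $y$-integral to a factor of order $t^{d}$. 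This is exactly the right observation, so the approach is sound and close in spirit to what the paper does in comparable places (the paper's $K_Q$ argument even uses a similar split into $E=\{|y-x|\ge |x-z|/2\}$ and $E^{c}$, while Claim~\ref{g_kernel} splits the $t$-integral at $|x-y|$).

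Two small corrections, neither of which breaks the argument. First, the long-time integral produces $|x-z|^{-2d}(1+|x-z|m_V(z))^{-2l}$ and not $|x-z|^{-2d}(1+|x-z|m_V(z))^{-2l+d}$: for $t>|x-z|/8$ one simply has $(1+t/\rho(z))^{-2l}\le(1+|x-z|/(8\rho(z)))^{-2l}\lesssim(1+|x-z|m_V(z))^{-2l}$ by monotonicity, so the ``polynomial loss'' you worried about does not occur (you do still need to symmetrize in $m_V(x)$, $m_V(z)$ via Lemma~\ref{m_{v} property_1}(i), which is where the only loss of exponent enters). Second, the discussion of $\rho(y)\sim\rho(x)$ in the short-time regime is both insufficient and unnecessary: insufficient because $|x-y|\le 2t$ implies $\rho(y)\sim\rho(x)$ only when $t\lesssim\rho(x)$, which is not guaranteed for all $t\le|x-z|/8$; and unnecessary because you can simply drop the $\rho(y)$ term, bound $(1+t/\rho(y)+t/\rho(z))^{-2l}\le(1+t/\rho(z))^{-2l}$, and carry through the entire estimate (short- and long-time) with $\rho(z)$ alone, performing the symmetrization to $m_V(x)+m_V(z)$ only at the very end. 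With that simplification your short-time computation goes through provided $N>d+l$, and the claim holds. (Finally, a note for the write-up: the displayed inequality in the lemma statement has a typo --- the $m_{V}(y)$ should read $m_{V}(z)$, since $y$ is the integration variable absorbed into the $\mathbf{B}$-norm.)
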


We end this section by a general weighted version of the Frechet-Kolmogrov theorem, which was proved by Xue, Yabuta and Yan \cite{xue2}.
\begin{lemma}\label{Frechet-Kolmogrov}
	Let $w$ be a weight on $\mathbb{R}^{d}$. Assume that $w^{-1/(p_{0}-1)}$ is also a weight on $\mathbb{R}^{d}$ for some $p_{0}>1$. Let $0<p<\infty$ and $\mathcal{F}$ be a subset in $L^{p}(w)$, then $\mathcal{F}$ is sequentially compact in $L^{p}(w)$ if the following three conditions are satisfied:
		\item{\rm (i)} $\mathcal{F}$ is bounded, i.e., $\displaystyle\sup_{f\in\mathcal{F}}\|f\|_{L^{p}(w)}<\infty$;
		\item{\rm (ii)} $\mathcal{F}$ uniformly vanishes at infinitly, i.e.,
		$$\lim_{N\rightarrow\infty}\sup_{f\in\mathcal{F}}\int_{|x|>N}|f(x)|^{p}w(x)dx=0;$$
		\item{\rm (iii)} $\mathcal{F}$ is uniformly equicontinous, i.e.,
		$$\lim_{|h|\rightarrow0}\sup_{f\in\mathcal{F}}\int_{\mathbb{R}^{d}}|f(x+h)-f(x)|^{p}w(x)dx=0.$$
\end{lemma}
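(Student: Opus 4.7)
The proof plan follows the classical Fréchet--Kolmogorov strategy, adapted to the weighted setting $L^p(w)$. Since total boundedness is equivalent to sequential compactness in complete metric spaces, it suffices to exhibit, for each $\eta > 0$, a finite $\eta$-net for $\mathcal{F}$ in $L^p(w)$.

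First, I would use condition (ii) to choose a radius $N > 0$ with $\int_{|x|>N}|f|^p w\,dx < (\eta/3)^p$ uniformly over $f \in \mathcal{F}$, reducing the problem to producing a finite net for $\{f\chi_{B_N} : f \in \mathcal{F}\}$ with $B_N = B(0,N)$. Next, pick a standard nonnegative mollifier $\phi \in \mathcal{C}_c^{\infty}$ with $\int \phi = 1$, set $\phi_{\varepsilon}(x) = \varepsilon^{-d}\phi(x/\varepsilon)$, and define $f_{\varepsilon} := f * \phi_{\varepsilon}$. Minkowski's integral inequality (in the range $p \geq 1$, with an elementary convexity workaround for $0 < p < 1$) gives
\[
\|f - f_{\varepsilon}\|_{L^p(w)} \leq \int \phi_{\varepsilon}(h)\,\|f - \tau_{-h} f\|_{L^p(w)}\,dh \leq \sup_{|h|\leq \varepsilon}\|f - \tau_h f\|_{L^p(w)},
\]
so condition (iii) lets me pick $\varepsilon > 0$ small enough that $\|f - f_{\varepsilon}\|_{L^p(w)} < \eta/3$ for every $f \in \mathcal{F}$.

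The core step is to show that the smoothed family $\{f_{\varepsilon}|_{\overline{B_{N+1}}} : f \in \mathcal{F}\}$ is relatively compact in $C(\overline{B_{N+1}})$, then pass to $L^p(w)$ via local integrability of $w$. For uniform pointwise boundedness, Hölder's inequality applied to the convolution yields, for $x \in \overline{B_{N+1}}$,
\[
|f_{\varepsilon}(x)| \leq \|f\|_{L^p(w)} \Bigl(\int_{B(x,\varepsilon)} \phi_{\varepsilon}(x-y)^{p'} w(y)^{-p'/p}\,dy\Bigr)^{1/p'},
\]
so condition (i) together with the assumed local integrability of $w^{-1/(p_0-1)}$ (after a routine exponent adjustment) produces a uniform bound independent of $f$ and of $x$ in the compact set. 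Equicontinuity follows from the same Hölder device applied to $|f_{\varepsilon}(x+h) - f_{\varepsilon}(x)| \leq \int |f(y)||\phi_{\varepsilon}(x+h-y) - \phi_{\varepsilon}(x-y)|\,dy$, using uniform continuity (and the mean value theorem) on the smooth function $\phi_{\varepsilon}$. Arzelà--Ascoli then furnishes a finite sup-norm $\bigl(\eta/(3 w(B_{N+1})^{1/p})\bigr)$-net on $\overline{B_{N+1}}$, which transfers to an $L^p(w)$-$\eta/3$-net on $B_N$ because $w(B_{N+1}) < \infty$.

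The main obstacle is the tension between the exponents $p$ and $p_0$: the natural Hölder estimate for $|f_{\varepsilon}(x)|$ asks for local integrability of $w^{-1/(p-1)}$, but the hypothesis only furnishes this for the auxiliary exponent $p_0 > 1$. The resolution is that the assumption $w^{-1/(p_0-1)} \in L^1_{\mathrm{loc}}$ is exactly what ensures the embedding $L^p(w) \hookrightarrow L^1_{\mathrm{loc}}(\mathbb{R}^d)$ on every compact set (through a Hölder step using the pair $(p_0, p_0')$), so that $f_{\varepsilon}$ is well-defined and pointwise dominated by a constant depending only on $\varepsilon$, the compact set, and $\|f\|_{L^p(w)}$; once this is absorbed, the three contributions --- the tail (Step 1), the mollification error (Step 2), and the finite sup-norm net (Step 3) --- combine by the triangle inequality to produce a finite $\eta$-net for $\mathcal{F}$ in $L^p(w)$, completing the proof.
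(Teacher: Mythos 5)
The paper does not prove this lemma; it quotes it from Xue, Yabuta and Yan \cite{xue2}, so there is no in-paper argument to compare against. Your outline follows the classical Fr\'echet--Kolmogorov route (cut the tail, mollify, apply Arzel\`a--Ascoli), which is the natural strategy, but there is a genuine gap at exactly the point you flag as ``the main obstacle.'' You assert that $w^{-1/(p_0-1)}\in L^1_{\rm loc}$ yields $L^p(w)\hookrightarrow L^1_{\rm loc}$ ``through a H\"older step using the pair $(p_0,p_0')$.'' That step reads
\[
\int_K |f|\,dy \le \Bigl(\int_K |f|^{p_0}w\,dy\Bigr)^{1/p_0}\Bigl(\int_K w^{-p_0'/p_0}\,dy\Bigr)^{1/p_0'},
\]
which controls the local $L^1$ norm by the local $L^{p_0}(w)$ norm, not the $L^p(w)$ norm. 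When $p\ge p_0$ one can bridge the gap by one further H\"older step using $w(K)<\infty$; but when $p<p_0$ --- and in particular for every $0<p\le 1$, which the statement explicitly allows --- the embedding $L^p(w)\hookrightarrow L^1_{\rm loc}$ simply fails. For instance take $d=1$, $w(x)=|x|^{1/2}$ on $|x|\le 1$ and $w(x)=1$ elsewhere, $p_0=2$ (so $w^{-1}$ is locally integrable), $p=1/4$, $f(x)=|x|^{-2}\chi_{\{|x|\le 1\}}$: then $\int |f|^p w\,dx=2<\infty$ but $\int_{-1}^1|f|\,dx=\infty$, so $f*\phi_\varepsilon$ is not even defined and Steps~2 and~3 of your plan collapse.

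Relatedly, both Minkowski's integral inequality in Step~2 and the final triangle inequality assembling the $\eta$-net are false for $0<p<1$; the concavity of $t\mapsto t^p$ there sends Jensen in the wrong direction, so the ``elementary convexity workaround'' is precisely where the argument cannot be made elementary. To repair the proof one needs either a compatibility hypothesis linking $p$ and $p_0$ (e.g.\ $p\ge p_0$, or directly $w^{-1/(p-1)}\in L^1_{\rm loc}$ for $p>1$), or a discretization device that never forms averages $\frac1{|Q|}\int_Q f\,dy$ and respects the quasi-norm. It is worth noting that in every application in this paper one has $1<p<\infty$ and $w\in A_p^\rho$; by Lemma~\ref{A_p property}(ii), $w^{-1/(p-1)}\in A_{p'}^\rho$ and hence is locally integrable, so one may take $p_0=p$ and your outline is sound in that restricted regime.
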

Note that a operator $\mathcal{T}: V\rightarrow Y$ is said to be a compact operator if $\mathcal{T}$ is continuous and maps bounded subsets into sequentially compact subsets.

\section{Compactness of commutators of Schr\"{o}dinger type operators}\label{sec3}

In this section, we will establish the weighted compactness of commutators of Riesz transforms,  standard Calder\'{o}n-Zygmund operators and Littlewood-Paley functions associated with Schrödinger operators.

\quad

\subsection{The weighted compactness of $[b,T]$}

\quad

\quad

 First of all, we consider the weighted compactness of $[b,T]$.
 \begin{theorem}\label{main_1}
 Let $1<p<\infty$, $b\in{{\rm CMO}}(\rho)$ and $w\in A_{p}^{\rho}$. Then $[b,T]$ is a compact operator from $L^{p}(\mathbb{R}^{d})$ to itself.
 \end{theorem}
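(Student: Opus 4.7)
The plan is to follow the classical Uchiyama-type strategy, combining a density reduction with the weighted Fréchet-Kolmogorov criterion provided by Lemma~\ref{Frechet-Kolmogrov}.

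First, by the very definition of $\mathrm{CMO}(\rho)$ as the $\mathrm{BMO}(\rho)$-closure of $\mathcal{C}_c^\infty(\mathbb{R}^d)$, and since Lemma~\ref{weighted_commutator_T} makes the assignment $b \mapsto [b,T]$ continuous from $\mathrm{BMO}(\rho)$ into $\mathcal{B}(L^p(w))$, and since the compact operators form a norm-closed subspace of $\mathcal{B}(L^p(w))$, it suffices to prove compactness for every fixed $b \in \mathcal{C}_c^\infty(\mathbb{R}^d)$. So I fix $b \in \mathcal{C}_c^\infty$ with $\mathrm{supp}(b) \subset B(0, R_0)$ and set $\mathcal{F} = \{[b,T]f : \|f\|_{L^p(w)} \le 1\}$. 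Condition (i) of Lemma~\ref{Frechet-Kolmogrov} is immediate from Lemma~\ref{weighted_commutator_T}.

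For condition (ii) (uniform vanishing at infinity), $b(x) = 0$ whenever $|x| > R_0$, so $[b,T]f(x) = -\int_{B(0,R_0)} K(x,y) b(y) f(y)\,dy$ for $|x|>R_0$. Invoking \eqref{kernel_T1} with $l$ to be chosen large, using the lower bound $m_V(y) \ge m_0 > 0$ on $\overline{B(0,R_0)}$, the fact that $|x-y| \sim |x|$ for $|x|>2R_0$, and Hölder's inequality together with the local integrability of $w^{-1/(p-1)}$ (which follows from $w^{-1/(p-1)} \in A_{p'}^\rho$ via Lemma~\ref{A_p property}(ii)), I obtain
\[
|[b,T]f(x)| \le \frac{C(b,l)}{(1+|x|)^{l+d}} \|f\|_{L^p(w)} \quad \text{for }|x|>2R_0.
\]
Applying \eqref{A_p property1} with $E = B(0,1)$ and $Q = B(0,R)$ gives the polynomial bound $w(B(0,R)) \lesssim R^{p(d+\theta)}$ for $R \ge 1$; a dyadic decomposition of $\{|x|>N\}$ and the choice $l > \theta$ then make $\int_{|x|>N} |[b,T]f|^p w \to 0$ uniformly in $f$ with $\|f\|_{L^p(w)} \le 1$.

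For condition (iii) (equicontinuity), the key identity is
\[
[b,T]f(x+h) - [b,T]f(x) = (b(x+h) - b(x)) Tf(x+h) + \int [K(x+h,y) - K(x,y)] (b(x) - b(y)) f(y)\,dy.
\]
The first summand is non-zero only in a $|h|$-neighbourhood of $\mathrm{supp}(b)$, where $|b(x+h) - b(x)| \le |h| \|\nabla b\|_\infty$; combining this with Lemma~\ref{maximal_T} (weighted boundedness of $Tf$) and a local comparison between $w$ and its translate $w(\cdot - h)$ on this bounded region yields an $L^p(w)$ norm of order $|h|$. For the integral term I split at $|x-y| = 2|h|$. In the outer region, the Hölder regularity \eqref{kernel_T2} together with $|b(x) - b(y)| \le \|\nabla b\|_\infty |x-y|$ and dyadic summation over shells $2^k|h| < |x-y| \le 2^{k+1}|h|$ produces a pointwise bound by $C |h|^{\delta_0} M_{V,\eta}f(x)$ for appropriate $\eta$; Lemma~\ref{maximal_bound} then provides an $L^p(w)$ bound of order $|h|^{\delta_0}$. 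In the inner region $|x-y| \le 2|h|$, the size estimate \eqref{kernel_T1} applied separately to $K(x+h,y)$ and $K(x,y)$, combined with the Lipschitz bound on $b$, yields a Riesz-type potential of order $1$ restricted to a ball of radius $2|h|$, contributing $O(|h|)$ in $L^p(w)$.

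The main technical obstacle I anticipate is condition (iii): extracting a clean modulus of continuity $|h|^{\delta_0}$ in the weighted setting requires the Schrödinger-adapted maximal $M_{V,\eta}$ (to absorb the $\Psi$-factors in the $A_p^\rho$ definition), a controlled comparison of $w$ with its translate on the compact region where the Lipschitz increment of $b$ lives, and a careful splitting that neither loses more than $\delta_0$ from the kernel smoothness nor more than one order of regularity from $b$. Once the three Fréchet-Kolmogorov conditions are verified, Lemma~\ref{Frechet-Kolmogrov} shows $\mathcal{F}$ is relatively compact in $L^p(w)$, which together with Step 1 completes the proof.
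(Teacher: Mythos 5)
Your high-level strategy (density reduction to $b\in\mathcal{C}_c^\infty$ plus the weighted Fr\'echet--Kolmogorov criterion) is the right one and agrees with the paper, and your treatments of conditions (i) and (ii) are sound; your Step II, which replaces the paper's scaling manipulation of $m_V$ by a direct lower bound $m_V\ge m_0>0$ on the compact support of $b$, is a legitimate variant. However, your Step III has a genuine gap, rooted in the two-term decomposition
$[b,T]f(x+h)-[b,T]f(x)=(b(x+h)-b(x))Tf(x+h)+\int(K(x+h,y)-K(x,y))(b(x)-b(y))f(y)\,dy$.
Two independent problems appear. First, the term $(b(x+h)-b(x))Tf(x+h)$ forces you to estimate $\int_{|x|\lesssim R_0}|Tf(x+h)|^pw(x)\,dx=\|Tf\|^p_{L^p(w(\cdot-h))}$; the ``local comparison between $w$ and its translate'' you invoke is not available for a general $A_p^\rho$ weight --- translates of such a weight are not pointwise comparable, nor does $w(\cdot-h)$ belong to $A_p^\rho$ with a constant uniform in $h$, since $\rho$ and $\Psi$ are not translation-invariant (inequality \eqref{V_property_1} only gives $m_V(x)\sim m_V(x-h)$ when $|h|m_V(x)\lesssim 1$, which can fail on the fixed ball). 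Second, in the inner region $|x-y|\le 2|h|$ you pair the size estimate for $K(x+h,y)$ with the factor $(b(x)-b(y))$, but the zero of $b(x)-b(y)$ sits at $y=x$ while the singularity of $K(x+h,y)$ sits at $y=x+h$, where $|b(x)-b(y)|\approx|h|\neq 0$; hence $\int_{|x-y|\le 2|h|}|K(x+h,y)|\,|b(x)-b(y)|\,|f(y)|\,dy$ contains a non-integrable $|x+h-y|^{-d}$ singularity and is not a ``Riesz potential of order $1$'' nor $O(|h|)$.

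The paper's decomposition is designed precisely to sidestep both issues. It introduces a free parameter $a>2$, truncates first at $|x-y|>a|h|$, and only then performs the add-and-subtract inside the truncated integral. The resulting $I_1=(b(x+h)-b(x))\int_{|x-y|>a|h|}K(x,y)f(y)\,dy$ involves only the truncated singular integral at $x$, dominated by $T^{\ast}f(x)$ and controlled by Lemma~\ref{maximal_T} with no translation of $w$; in the outer region the kernel difference is paired with $(b(x+h)-b(y))$ (which needs only $\|b\|_\infty$), and in the inner region $K(x+h,y)$ appears only together with $(b(x+h)-b(y))$, whose Lipschitz zero at $y=x+h$ exactly cancels the singularity. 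The parameter $a$ is essential to close the argument: $I_2\lesssim a^{-\delta_0}M_{V,\eta}f$ while $I_1,I_3,I_4$ are $O((a+1)|h|)$, so one first fixes $a$ large and then lets $|h|\to 0$. You would need to restructure your Step III along these lines before the proof is complete.
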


\begin{proof}
Since
$$|[b_{1},T]f(x)-[b_{2},T]f(x)|\leq|[b_{1}-b_{2},T]f(x)|$$
and $b\in {\rm CMO}(\rho)\subset {\rm BMO}(\rho)$, then by Lemma $\ref{weighted_commutator_T}$, the commutator $[b,T]$ is continuous on $L^{p}(w)$. Hence, for any bounded set ${F}\subset L^{p}(w)$, where $f\in{F}$ with $\|f\|_{L^{p}(w)}\lesssim 1$, it suffices to prove that
$$\mathcal{F}=\{[b,T]f:\,f\in{F}, b\in {\rm CMO}(\rho)\}$$
is a sequentially compact subset. According to a density argument, if $b\in {\rm CMO}(\rho)$, then there exists a sequence of functions $b_{\epsilon}\in \mathcal{C}_{c}^{\infty}(\mathbb{R}^{d})$ such that
$$\|b-b_{\epsilon}\|_{{\rm BMO}(\rho)}<\epsilon.$$
Thus, by Lemma $\ref{weighted_commutator_T}$, we show that
$$\|[b,T]-[b_{\epsilon,T}]\|_{L^{p}(w)\rightarrow L^{p}(w)}\leq\|[b-b_{\epsilon},T]\|_{L^{p}(w)\rightarrow L^{p}(w)}\lesssim \epsilon.$$
Therefore, it is enough to prove that $\mathcal{F}$ is sequentially compact. Without loss of generalization, we will verify $\mathcal{F}$ satisfies the conditions $(\rm i)-(\rm iii)$ of Lemma $\ref{Frechet-Kolmogrov}$ for $b\in\mathcal{C}_{c}^{\infty}(\mathbb{R}^{d})$. The proof is divided into three steps.

{\it Step I: $\mathcal{F}$ satisfies the condition $(\rm i)$}.
First, by Lemma $\ref{weighted_commutator_T}$, we have
$$\sup_{f\in F}\|[b,T]f\|_{L^{p}(w)}\leq C\|b\|_{{\rm BMO}(\rho)}\|f\|_{L^{p}(w)}<\infty,$$
which yields the fact that the set $\mathcal{F}$ is bounded.

{\it Step II: $\mathcal{F}$ satisfies the condition $(\rm ii)$}. We adapt the method using in \cite{li-peng} to verify the condition $(\rm ii)$ of Lemma $\ref{Frechet-Kolmogrov}$. Assume that $b\in\mathcal{C}_{c}^{\infty}(\mathbb{R}^{d})$ and $\text{ supp } b \subset B(0,R)$, where $B(0,R)$ is a ball of radius $R$ and centered at origin in $\mathbb{R}^{d}$. For $\nu>2$, set $B^{c}=\{x\in\mathbb{R}^{d}:\,|x|>\nu R\}$. Then we have
$$\int_{|x|>\nu R}|[b,T]f(x)|^{p}w(x)dx\leq\int_{|x|>\nu R}\left(\int_{|y|<\rho(0)}|K(x,y)||b(y)||f(y)|dy\right)^{p}w(x)dx.$$

It can be deduced from  $\eqref{V_property_1}$ and the scaling technique directly that for any $x, y\in\mathbb{R}^{d}$ and $c\in(0,1]$,
\begin{eqnarray}\label{m_{v} property_2}
\frac{1}{C_{0}(1+|x-y|m_{\mathit{V}}(y))^{k_{0}+1}}&\lesssim&\frac{1}{1+c|x-y|m_{\mathit{V}}(x)}\\
&\lesssim&\frac{C_{0}}{c(1+|x-y|m_{\mathit{V}}(y))^{{1}/{(1+k_{0})}}},\nonumber
\end{eqnarray}
where the constants $k_{0}$ and $C_{0}$ is as same as in $\eqref{V_property_1}$ of Lemma $\ref{m_{v} property_1}$.

Since $|x|>\nu R$ implies $|x-y|>(1-1/\nu)|x|$ with $\nu>2$,  applying $\eqref{kernel_T1}$ and H\"{o}lder's inequality, we have
\begin{align*}
&\int_{|y|<R}|K(x,y)||b(y)||f(y)|dy\\
&\leq\int_{|y|<R}\frac{C_{l}}{(1+|x-y|(m_{\mathit{V}}(x)+m_{\mathit{V}}(y)))^{l}}\frac{1}{|x-y|^{d}}|b(y)||f(y)|dy\\
&\leq\int_{|y|<R}\frac{C_{l}}{(1+(1-1/\nu)|x|m_{\mathit{V}}(x))^{l}}\frac{1}{((1-1/\nu)|x|)^{d}}|)b(y)||f(y)|dy\\
&\leq\frac{C_{l}\|b\|_{L^{\infty}(\mathbb{R}^{d})}}{(1-1/\nu)^{d}|x|^{d}}\frac{1}{(1+(1-1/\nu)|x|m_{\mathit{V}}(x))^{l}}\left(\int_{|y|<R}|f(y)|dy\right)\\
&\leq \frac{C_{l}\|b\|_{L^{\infty}(\mathbb{R}^{d})}}{(1-1/\nu)^{d}|x|^{d}}\frac{\|f\|_{L^{p}(w)}}{(1+(1-1/\nu)|x|m_{\mathit{V}}(x))^{l}}
\left(\int_{|y|<R}w^{-{1}/{(p-1)}}(y)dy\right)^{1-1/p}.
\end{align*}
Thus, by using $(\ref{m_{v} property_2})$, it follows that
\begin{align*}
&\left(\int_{|x|>\nu R}|[b,T]f(x)|^{p}w(x)dx\right)^{1/p}\\
&\leq C\|b\|_{L^{\infty}(\mathbb{R}^{d})}\|f\|_{L^{p}(w)}\sum_{j=0}^{\infty}\frac{1}{(1-1/\nu)^{d}(2^{j}\nu R)^{d}}\frac{1}{(1+(1-1/\nu)(2^{j}\nu R)m_{\mathit{V}}(0))^{{l}/{(k_{0}+1)}}}\\
&\quad\times \left(\int_{2^{j}\nu R<|x|<2^{j+1}\nu R}w(x)dx\right)^{{1}/{p}}\left(\int_{|y|<R}w^{-{1}/{(p-1)}}(y)dy\right)^{1-1/p}\\
&\leq C\|b\|_{L^{\infty}(\mathbb{R}^{d})}\|f\|_{L^{p}(w)}\sum_{j=0}^{\infty}\frac{1}{(1-1/\nu)^{d}(2^{j}\nu R)^{d}}\frac{1}{(1+(1-1/\nu)(2^{j}\nu R)m_{\mathit{V}}(0))^{{l}/{(k_{0}+1)}}}\\
&\quad\quad\times \left(\int_{B(0,2^{j+1}\nu R)}w(x)dx\right)^{{1}/{p}}\left(\int_{B(0, R)}w^{-{1}/{(p-1)}}(x)dx\right)^{1-1/p}.
\end{align*}
Taking $Q=B(0,2^{j+1}\nu R)$ and $E=B(0,R)$, we  use $\eqref{A_p property1}$ in Lemma $\ref{A_p property}$ to obtain
\begin{align*}
w(5Q)&\leq C\left(\frac{\Psi(Q)|Q|}{|E|}\right)^{p}w(E)\\
&\leq Cw(B(0,R))\left(\frac{(1+2^{j+1}\nu R/\rho(0))^{\theta}(2^{j+1}\nu R)^{d}}{R^{d}}\right)^{p}\\
&\leq Cw(B(0,R))(1+2^{j+1}\nu R/\rho(0))^{\theta p}(2^{j+1}\nu)^{dp}.
\end{align*}
Notice that $1/\rho(0)=m_{\mathit{V}}(0)$, we have
\begin{align*}
&\left(\int_{|x|>\nu R}|[b,T]f(x)|^{p}w(x)dx\right)^{1/p}\\
&\leq C\|b\|_{L^{\infty}(\mathbb{R}^{d})}\|f\|_{L^{p}(w)}\sum_{j=0}^{\infty}\frac{(2^{j+1}\nu)^{d}}{(1-1/\nu)^{d}(2^{j}\nu R)^{d}}\frac{(1+2^{j+1}\nu R/\rho(0))^{\theta }}{(1+(1-1/\nu)(2^{j}\nu R)/\rho(0))^{{l}/{(k_{0}+1)}}}\\
&\quad\quad\times \left(\int_{B(0, R)}w(x)dx\right)^{{1}/{p}}\left(\int_{B(0, R)}w^{-{1}/{(p-1)}}(x)dx\right)^{1-1/p}\\
&\leq C\|b\|_{L^{\infty}(\mathbb{R}^{d})}\|f\|_{L^{p}(w)}\|w\|_{A_{p}^{\rho}}^{{1}/{p}}\frac{1}{(1-1/\nu)^{d+{l}/{(k_{0}+1)}}}\sum_{j=0}^{\infty}\frac{(1+2^{j}\nu R/\rho(0))^{2\theta }}{(1+(2^{j}\nu R)/\rho(0))^{{l}/{(k_{0}+1)}}}.
\end{align*}
Next, we will divide the discussion on the  convergence of the above series into two cases.

{\it Case I}: $R>\rho(0)$. Since $R>\rho(0)$ implies $1/(1+(2^{j}\nu R)/\rho)<1/(1+2^{j}\nu)\leq 1/2^{j}\nu$, if $l>2\theta(k_{0}+1)$, it holds
$$
\sum_{j=0}^{\infty}\frac{(1+2^{j}\nu R/\rho(0))^{2\theta}}{(1+(2^{j}\nu R)/\rho(0))^{{l}/{(k_{0}+1)}}}\leq
\sum_{j=0}^{\infty}\frac{1}{(2^{j}\nu)^{{l}/{(k_{0}+1)}-2\theta}}\leq \frac{C}{\nu^{{l}/{(k_{0}+1)}-2\theta}}.
$$

{\it Case II}: $R\leq\rho(0)$. Note that $R$ and $\rho$ are finite, there exists finite integer $N\geq[\log_{2}(\rho(0)/R)]+1$ such that $2^{N}R>\rho(0)$. Hence, this case goes back to {\it Case I} and we get
$$
\sum_{j=0}^{\infty}\frac{(1+2^{j}\nu R/\rho(0))^{2\theta}}{(1+(2^{j}\nu R)/\rho(0))^{{l}/{(k_{0}+1)}}}\leq
\sum_{j=0}^{\infty}\frac{1}{(2^{j}\nu)^{{l}/{(k_{0}+1)}-2\theta}}\leq \frac{C2^{N({l}/{(k_{0}+1)}-2\theta)}}{\nu^{{l}/{(k_{0}+1)}-2\theta}}.
$$
By the above argument, we obtain
\begin{align*}
&\left(\int_{|x|>\nu R}|[b,T]f(x)|^{p}w(x)dx\right)^{1/p}\\
&\leq C\frac{\|b\|_{L^{\infty}(\mathbb{R}^{d})}\|f\|_{L^{p}(w)}\|w\|_{A_{p}^{\rho}}^{{1}/{p}}}{(1-1/\nu)^{d+{l}/{(k_{0}+1)}}\nu^{{l}/{(k_{0}+1)}-2\theta}}
\max\{2^{N({l}/{(k_{0}+1)}-2\theta)},1\},
\end{align*}
which implies that for any $p>1$ and $l>2\theta(k_{0}+1)$,
$$\lim_{\nu\rightarrow\infty}\int_{|x|>\nu R}|[b,T]f(z)|^{p}w(x)dx=0$$
holds whenever $f\in F$.

{\it Step III: $\mathcal{F}$ satisfies condition $(\rm iii)$}. It remains to show that the set $\mathcal{F}$ is uniformly equicontinuous.
It suffices to verify that for any $\epsilon>0$, if $|h|$ is sufficiently small and only depends  on $\epsilon$, then
\begin{equation}\label{need_prove}
\|[b,T]f(h+\cdot)-[b,T]f(\cdot)\|_{L^{p}(w)}\leq C\epsilon
\end{equation}
holds uniformly for $f\in F$.

For any $x\in\mathbb{R}^{d}$, we divide $[b,T]f(x+h)-[b,T]f(x)=\sum^{4}_{i=1}I_{i}(x)$, where
$$\left\{\begin{aligned}
I_{1}(x)&:=\int_{|x-y|>a|h|}K(x,y)(b(x+h)-b(x))f(y)dy;\\
I_{2}(x)&:=\int_{|x-y|>a|h|}(K(x+h,y)-K(x,y))(b(x+h)-b(y))f(y)dy;\\
I_{3}(x)&:=\int_{|x-y|\leq a|h|}K(x,y)(b(x)-b(y))f(y)dy;\\
I_{4}(x)&:=\int_{|x-y|\leq a|h|}K(x+h,y)(b(x+h)-b(y))f(y)dy.
\end{aligned}\right.$$

Clearly, by the definition of $T^{\ast}$ and $b\in \mathcal{C}_{c}^{\infty}(\mathbb{R}^{d})$, we have $$
|I_{1}(x)|\leq|h|\|\nabla b\|_{L^{\infty}(\mathbb{R}^{d})} T^{\ast}f(x),$$
which, together with  Lemma $\ref{maximal_T}$, indicates that
$$
\|I_{1}\|_{L^{p}(w)}\leq |h|\|T^{\ast}f\|_{L^{p}(w)}\leq C|h|\|f\|_{L^{p}(w)}.
$$

For $I_{2}(x)$, take $a>2$. Using  $\eqref{kernel_T2}$ and $\|b\|_{L^{\infty}(\mathbb{R}^{d})}\leq C$, we have
\begin{align*}
&|I_{2}(x)|\lesssim \int_{|x-y|>a|h|}\frac{1}{(1+|x-y|(m_{\mathit{V}}(x)+m_{\mathit{V}}(y)))^{l}}\frac{|h|^{\delta_{0}}}{|x-y|^{d+\delta_{0}}}|f(y)|dy\\
&\lesssim\sum_{k=0}^{\infty}\int_{2^{k}a|h|<|x-y|\leq2^{k+1}a|h|}\frac{1}{(1+|x-y|m_{\mathit{V}}(x))^{l}}\frac{|h|^{\delta_{0}}}{|x-y|^{d+\delta_{0}}}|f(y)|dy\\
&\lesssim\sum_{k=0}^{\infty}\int_{2^{k}a|h|<|x-y|\leq2^{k+1}a|h|}\frac{1}{(1+(2^{k}a|h|)m_{\mathit{V}}(x))^{l}}\frac{|h|^{\delta_{0}}}{(2^{k}a|h|)^{d+\delta_{0}}}|f(y)|dy\\
&\lesssim\sum_{k=0}^{\infty} \frac{1}{(1+(2^{k}a|h|)m_{\mathit{V}}(x))^{l}}\frac{|h|^{\delta_{0}}}{(2^{k}a|h|)^{d+\delta_{0}}} \int_{B(x,2^{k+1}a|h|)}|f(y)|dy\\
&\lesssim M_{\mathit{V},\eta}f(x)\sum_{k=0}^{\infty}\frac{|h|^{\delta_{0}}}{(2^{k}a|h|)^{\delta_{0}}}\\
&\lesssim a^{-\delta_{0}}M_{\mathit{V},\eta}f(x),
\end{align*}
where we have used the constant $l=\theta\eta$. Hence, it follows from Lemma $\ref{maximal_bound}$ that  for $\eta=p^{\prime}$,  $$\|I_{2}\|_{L^{p}(w)}\leq Ca^{-\delta_{0}}\|M_{\mathit{V},\eta}f\|_{L^{p}(w)}\leq Ca^{-\delta_{0}}\|f\|_{L^{p}(w)}.$$

For  $I_{3}(x)$, applying $\eqref{kernel_T1}$ and $b\in \mathcal{C}_{c}^{\infty}(\mathbb{R}^{d})$, we obtain
\begin{align*}
&|I_{3}(x)|\lesssim\|\nabla b\|_{L^{\infty}(\mathbb{R}^{d})}\int_{|x-y|\leq a|h|}\frac{C_{l}}{(1+|x-y|m_{\mathit{V}}(x))^{l}}\frac{1}{|x-y|^{d-1}}|f(y)|dy\\
&\lesssim\sum_{j=-\infty}^{0}\int_{2^{j-1}a|h|<|x-y|\leq 2^{j}a|h|}\frac{1}{(1+(2^{j-1}a|h|)m_{\mathit{V}}(x))^{l}}\frac{1}{(2^{j-1}a|h|)^{d-1}}|f(y)|dy\\
&\lesssim\sum_{j=-\infty}^{0}\frac{1}{(1+(2^{j-1}a|h|)m_{\mathit{V}}(x))^{l}}\frac{1}{(2^{j-1}a|h|)^{d-1}}\int_{B(x,2^{j}a|h|)}|f(y)|dy\\
&\lesssim M_{\mathit{V},\eta}f(x)\sum_{j=-\infty}^{0}2^{j-1}a|h|\\
&\lesssim a|h|M_{\mathit{V},\eta}f(x),
\end{align*}
where we have used the constant $l=\theta\eta$.  Hence, we use Lemma $\ref{maximal_bound}$ to deduce that $$\|I_{3}\|_{L^{p}(w)}\leq Ca|h|\|M_{\mathit{V},\eta}f\|_{L^{p}(w)}\leq Ca|h|\|f\|_{L^{p}(w)}$$ for $\eta=p^{\prime}$.

The estimate of  $I_{4}(x)$ is  similar to  that of $I_{3}(x)$. Since $|x-y|\leq a|h|$, we have $|x+h-y|\leq (a+1)|h|$. Using the kernel property of $T$ in $\eqref{kernel_T1}$ and $b\in \mathcal{C}_{c}^{\infty}(\mathbb{R}^{d})$, we have
\begin{align*}
&|I_{4}(x)|\lesssim\int_{|x+h-y|\leq (a+1)|h|}\frac{C_{l}}{(1+|x+h-y|m_{\mathit{V}}(x+h))^{l}}\frac{1}{|x+h-y|^{d-1}}|f(y)|dy\\
&\lesssim\sum_{j=-\infty}^{0}\frac{\frac{1}{(2^{j-1}(a+1)|h|)^{d-1}}}{(1+(2^{j-1}(a+1)|h|)m_{\mathit{V}}(x+h))^{l}}\int_{B(x+h,2^{j}(a+1)|h|)}|f(y)|dy\\
&\lesssim M_{\mathit{V},\eta}f(x)\sum_{j=-\infty}^{0}2^{j-1}(a+1)|h|\\
&\lesssim (a+1)|h|M_{\mathit{V},\eta}f(x),
\end{align*}
where we have used the constant $l=\theta\eta$. Thus,
$$\|I_{4}\|_{L^{p}(w)}\leq C(a+1)|h|\|M_{\mathit{V},\eta}f\|_{L^{p}(w)}\leq C(a+1)|h|\|f\|_{L^{p}(w)}.$$
Combining with the estimations of $I_{1}(x)$, $I_{2}(x)$, $I_{3}(x)$ and $I_{4}(x)$, it implies that
\begin{align*}
\|[b,T]f(\cdot+h)-[b,T]f(\cdot)\|_{L^{p}(w)}
&\leq \sum_{i=1}^{4}\|I_{i}\|_{L^{p}(w)}\\
&\leq C(|h|+a^{-\delta_{0}}+a|h|+(a+1)|h|)\|f\|_{L^{p}(w)}.
\end{align*}
Consequently, for any $\epsilon>0$, we can choose $a$ large enough such that $$\max\{{1}/{a^{2}},{1}/{(a+1)^{2}}, {1}/{a^{\delta_{0}}}\}<\epsilon,$$
and set $|h|$ being sufficiently small satisfying $|h|<\min\{1/a^{2},1/(a+1)^{2}\}$. Letting $a\rightarrow\infty$, we can see that  $\mathcal{F}$ is uniformly equicontinous (condition (iii)). This completes the proof of Theorem $\ref{main_1}$.
\end{proof}

\quad

\subsection{The weighted compactness of $g_{b}$ and $S_{Q,b}$}

\quad

\quad


\begin{theorem}\label{main_3}
	Let $1<p<\infty$, $b\in{\rm CMO}(\rho)$ and  $w\in A_{p}^{\rho}$. Then $g_{b}$ and $S_{Q,b}$ are compact operators from $L^{p}(w)$ to itself.
\end{theorem}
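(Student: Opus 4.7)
The plan is to mirror the three-step scheme used to prove Theorem \ref{main_1}. Continuity of $g_{Q,b}$ and $S_{Q,b}$ on $L^p(w)$ is delivered by Lemma \ref{little-paley_commutators}, and the sublinearity
\[
\bigl|g_{Q,b_1}f(x) - g_{Q,b_2}f(x)\bigr| \leq g_{Q,b_1 - b_2}f(x)
\]
(together with its analogue for $S_{Q,b}$), combined with the density of $\mathcal{C}_c^\infty(\mathbb{R}^d)$ in $\mathrm{CMO}(\rho)$, reduces the task to showing that, when $b \in \mathcal{C}_c^\infty(\mathbb{R}^d)$ is supported in some $B(0,R)$, the set $\mathcal{F} = \{g_{Q,b}f : f \in F\}$ (and analogously $\{S_{Q,b}f : f \in F\}$) satisfies the three Frechet--Kolmogrov criteria of Lemma \ref{Frechet-Kolmogrov} uniformly over any bounded $F \subset L^p(w)$.

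Condition (i) is immediate from Lemma \ref{little-paley_commutators}. For condition (ii), I would apply Minkowski's inequality to move the $L^p(w)$ norm inside the $L^2(dt/t)$ norm (and, for $S_{Q,b}$, the cone integration as well), reducing the estimate to a pointwise control of $\|Q_t(x,y)\|_{L^2(dt/t)}$; by Lemma \ref{kernal_property}(i) and the elementary identity $\int_0^\infty t^{-2d}\exp(-2c|x-y|^2/t^2)\,dt/t \lesssim |x-y|^{-2d}$, this produces a decay identical to that of a standard Calder\'on--Zygmund kernel, weighted by $(1+|x-y|m_V(x))^{-l}$. For $S_{Q,b}$, inequality (\ref{kernel_inequality_1}) together with Lemma \ref{estimate_vector} supplies the same decay directly. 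From here, the dyadic decomposition, geometric summation, and $A_p^\rho$ manipulations through Lemma \ref{A_p property} carry over verbatim from Step II of Theorem \ref{main_1}, yielding $\int_{|x|>\nu R}|g_{Q,b}f(x)|^p w(x)\,dx \to 0$ as $\nu \to \infty$, uniformly in $f \in F$.

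Condition (iii) is the main obstacle. I would apply the reverse triangle inequality for the $L^2(dt/t)$ (and cone) norm to dominate $|g_{Q,b}f(x+h) - g_{Q,b}f(x)|$ by
\[
\bigl\|Q_t\bigl((b(x+h)-b(\cdot))f\bigr)(x+h) - Q_t\bigl((b(x)-b(\cdot))f\bigr)(x)\bigr\|_{L^2(dt/t)},
\]
and then perform the same four-piece splitting $I_1 + I_2 + I_3 + I_4$ used in Step III of Theorem \ref{main_1}, based on whether $|x-y| > a|h|$ or $|x-y| \leq a|h|$, and on which of the differences $b(x+h)-b(y)$, $b(x)-b(y)$ is present. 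The piece $I_1$ collapses to $|h|\,\|\nabla b\|_{L^\infty}\,g_Q f(x)$, handled by Lemma \ref{little-paley}. The piece $I_2$ uses the smoothness in Lemma \ref{kernal_property}(ii) to pull out the factor $(|h|/|x-y|)^{\delta_0}$, and then dyadic annular summation as in Theorem \ref{main_1} yields a bound $\lesssim a^{-\delta_0} M_{V,\eta}f(x)$. The pieces $I_3$ and $I_4$ are controlled by the size estimate in Lemma \ref{kernal_property}(i) on thin dyadic annuli near the diagonal, giving $\lesssim a|h|\,M_{V,\eta}f(x)$. Lemma \ref{maximal_bound} with $\eta = p'$ transfers these pointwise bounds into $L^p(w)$ norms; choosing $a$ large first and then $|h|$ small secures $\|g_{Q,b}f(\cdot+h) - g_{Q,b}f(\cdot)\|_{L^p(w)} \lesssim \epsilon$. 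The argument for $S_{Q,b}$ runs identically after one additionally averages over the cone $|y-x|<t$ before taking the $L^2(dy\,dt/t^{d+1})$ norm, using $\widetilde K$ from Lemma \ref{estimate_vector} to justify the kernel decay uniformly in $t$.

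The principal technical hurdle beyond Theorem \ref{main_1} is the vector-valued setup: one must verify that both the size estimate and the smoothness estimate for $Q_t$ survive the $t$-integration without losing the exponential decay in $m_V$. This is ensured by taking $l$ large enough in Lemma \ref{kernal_property} so that the Gaussian factor absorbs the $t$-dependence and the remaining factor depends only on $|x-y|m_V(x)$, enabling the Schr\"odinger-weighted dyadic summation already developed for Theorem \ref{main_1}. With that observation in place, no genuinely new ideas are required beyond the Calder\'on--Zygmund template.
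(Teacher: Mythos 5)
Your overall scaffold matches the paper: reduce to $b\in\mathcal{C}_c^\infty$, verify the three Fr\'echet--Kolmogorov conditions, prove a Calder\'on--Zygmund-type size/smoothness estimate for the vector-valued kernel (the paper's Lemma \ref{estimate_vector}, the claimed inequality $\eqref{S_kernel}$, and Claim \ref{g_kernel}), and then run the $I_1+I_2+I_3+I_4$ decomposition with dyadic summation. Conditions (i), (ii) and your treatment of $I_2, I_3, I_4$ are all essentially what the paper does.

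However, your treatment of $I_1$ has a genuine gap. Once you truncate to $|x-z|>a|h|$ (which you must, since $I_2,I_3,I_4$ rely on it), the $L^2(dt/t)$ (resp.\ cone) norm of $I_1$ is
\[
|b(x+h)-b(x)|\cdot\left(\int_0^\infty\left|\int_{|x-z|>a|h|}Q_t(x,z)f(z)\,dz\right|^2\frac{dt}{t}\right)^{1/2},
\]
i.e.\ $|b(x+h)-b(x)|$ times a \emph{truncated} $g$-function (resp.\ truncated area function). This is not $g_Qf(x)$, and since $Q_t$ is signed, the truncated square function is not pointwise dominated by the full one, so Lemma \ref{little-paley} does not apply directly. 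In the proof of Theorem \ref{main_1} this step was handled by the maximal truncated operator $T^\ast$ together with Lemma \ref{maximal_T}; the analogous maximal boundedness result for truncated Littlewood--Paley functions is not among the paper's quoted lemmas, and a naive dyadic estimate of $\int_{|x-z|>a|h|}|\widetilde{K}(x,z)|_{\mathbf{B}}|f(z)|\,dz$ picks up a logarithmic loss in $a|h|m_V(x)$. The paper fills this gap by proving a Cotlar-type pointwise inequality (the claim $\eqref{S_Q}$): splitting $f=f\chi_{2B}+f\chi_{(2B)^c}$ with $B=B(x,a|h|/2)$, averaging over $B$, and invoking the unweighted $L^{q_0}$-boundedness of $S_Q$ on the local part and the $\widetilde{K}$-smoothness on the global part, one obtains
\[
S_{Q,a,h}f(x)\lesssim M_{V,\eta}(S_Qf)(x)+\bigl(M_{V,\eta}(|f|^{q_0})(x)\bigr)^{1/q_0}+M_{V,\eta}f(x).
\]
Without this (or an equivalent lemma on the weighted boundedness of the maximal truncated $g_Q$ and $S_Q$), your estimate of $I_1$ does not close. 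This is the only substantive difference between your proposal and the paper, but it is the main technical novelty of Theorem \ref{main_3} beyond the template of Theorem \ref{main_1}.
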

\begin{proof} We first prove that $S_{Q,b}$ is a compact operator from $L^{p}(w)$ to itself. Since
$$|S_{Q,b_{1}}f(x)-S_{Q,b_{2}}f(x)|\leq |S_{Q,b_{1}-b_{2}}f(x)|,$$
 by the argument in the proof of Theore $\ref{main_1}$ and Lemma $\ref{little-paley_commutators}$, we only need to prove that for $b\in\mathcal{C}_{c}^{\infty}(\mathbb{R}^{d})$, $\mathcal{S}=\{[b,T_{1}]f:\, f\in F, b\in{\rm BMO}(\rho)\}$ satisfies the conditions $(\rm ii)-(\rm iii)$ of Lemma $\ref{Frechet-Kolmogrov}$. We divide the proof into two steps.
	
	{\it Step I: $\mathcal{S}$ satisfies the condition $(\rm ii)$}.
	
	Suppose {\rm supp} $b\subset \{z:\,|z|<R\}$ and  choose $\nu>2$. For $|x|>\nu R>2R$, we have $b(x)=0$. Therefore, by the Minkowski inequality, we have
	\begin{align*} |S_{Q,b}f(x)|&=\left(\int_{0}^{\infty}\int_{|x-y|<t}\left|\int_{\mathbb{R}^{d}}Q_{t}(y,z)(b(x)-b(z))f(z)dz\right|^{2}\frac{dydt}{t^{d+1}}\right)^{\frac{1}{2}}\\
	&\leq\int_{|z|<R}\left(\int_{0}^{\infty}\int_{|x-y|<t}|Q_{t}(y,z)|^{2}\frac{dydt}{t^{d+1}}\right)^{\frac{1}{2}}|b(z)||f(z)|dz.
	\end{align*}
	Using $\eqref{kernel_inequality_1}$, we show that
	$$
	|S_{Q,b}f(x)|\leq \int_{|z|<R}|\widetilde{K}(x,z)|_{\bf B}|b(z)||f(z)|dz.
	$$
Notice that $|\widetilde{K}(x,z)|_{\bf B}$ and $|K(x,y)|$ are both dominated by
$$\frac{C_{l}}{(1+|x-z|(m_{\mathit{V}}(x)+m_{\mathit{V}}(y)))^{l}}\frac{1}{|x-z|^{n}}.$$
 By Lemma $\ref{estimate_vector}$, following the   argument in {\it Step II} of the proof of Theorem $\ref{main_1}$, we obtain
$$\left(\int_{|x|>\nu R}|S_{Q,b}f(x)|^{p}w(x)dx\right)^{\frac{1}{p}}\leq C\frac{\|b\|_{L^{\infty}(\mathbb{R}^{d})}\|f\|_{L^{p}(w)}\|w\|_{A_{p}^{\rho}}^{\frac{1}{p}}}{(1-1/\nu)^{d+\frac{l}{k_{0}+1}}\nu^{\frac{l}{k_{0}+1}-2\theta}}\max\{2^{N(\frac{l}{k_{0}+1}-2\theta)},1\}.$$
For $f\in F$, letting $\nu\rightarrow\infty$ reaches
$$
\int_{|x|>\nu R}|S_{Q,b}f(x)|^{p}w(x)dx\rightarrow0,
$$
 i.e., $\mathcal{S}$ satisfies the condition $(\rm ii)$ in Lemma $\ref{Frechet-Kolmogrov}$.

{\it Step II: $\mathcal{S}$ satisfies the condition $(\rm iii)$}.\quad It suffices to prove that for $1<p<\infty$ and $w\in A_{p}^{\rho}$,
$$\lim_{|h|\rightarrow0}\|S_{Q,b}f(\cdot+h)-S_{Q,b}f(\cdot)\|_{L^{p}(w)}=0.$$
Note that
$$|S_{Q,b}f(x+h)-S_{Q,b}f(x)|\leq \left(\int_{0}^{\infty}\int_{|x-y|<t}|D(x,y,t)|^{2}\frac{dydt}{t^{d+1}}\right)^{\frac{1}{2}},$$
where
$$
D(x,y,t)=\int_{\mathbb{R}^{d}}Q_{t}(y+h,z)(b(x+h)-b(z))f(z)dz-\int_{\mathbb{R}^{d}}Q_{t}(y,z)(b(x)-b(z))f(z)dz.
$$
For any $x\in\mathbb{R}^{d}$,  choose $a>0$, and  write $D(x,y,y)=D_{1}+D_{2}+D_{3}+D_{4}$, where
$$\left\{\begin{aligned}
D_{1}(x)&:=\int_{|x-z|>a|h|}Q_{t}(y,z)(b(x+h)-b(z))f(z)dz;\\
D_{2}(x)&:=\int_{|x-z|>a|h|}(Q_{t}(y+h,z)-Q_{t}(y,z))(b(x+h)-b(z))f(z)dz;\\
D_{3}(x)&:=\int_{|x-z|\leq a|h|}Q_{t}(y,z)(b(x)-b(z))f(z)dz;\\
D_{4}(x)&:=\int_{|x-z|\leq a|h|}Q_{t}(y+h,z)(b(x+h)-b(z))f(z)dz.
\end{aligned}\right.$$
For $D_{3}$ and $D_{4}$, it can be deduced from $\eqref{kernel_inequality_1}$ and Minkowski's inequality that
$$\left(\int_{0}^{\infty}\int_{|x-y|<t}|D_{3}|^{2}\frac{dydt}{t^{d+1}}\right)^{\frac{1}{2}}\leq \int_{|x-z|\leq a|h|}|\widetilde{K}(x,z)|_{\bf B}|b(x)-b(z)||f(z)|dz$$
and
$$\left(\int_{0}^{\infty}\int_{|x-y|<t}|D_{4}|^{2}\frac{dydt}{t^{d+1}}\right)^{\frac{1}{2}}\leq \int_{|x-z|\leq a|h|}|\widetilde{K}(x+h,z)|_{\bf B}|b(x+h)-b(z)||f(z)|dz.$$
Since $|\widetilde{K}(x,z)|_{\bf B}$ and $|K(x,y)|$ are  both dominated by
$$\frac{C_{l}}{(1+|x-z|(m_{\mathit{V}}(x)+m_{\mathit{V}}(y)))^{l}}\frac{1}{|x-z|^{n}},$$
following  the  argument in the proof of Theorem $\ref{main_1}$, we use Lemma $\ref{estimate_vector}$ to obtain \begin{equation}\label{D_3}
\left\|\left(\int_{0}^{\infty}\int_{|x-y|<t}|D_{3}|^{2}\frac{dydt}{t^{d+1}}\right)^{\frac{1}{2}}\right\|_{L^{p}(w)}\leq Ca|h|\|f\|_{L^{p}(w)}
\end{equation}
and
\begin{equation}\label{D_4}
\left\|\left(\int_{0}^{\infty}\int_{|x-y|<t}|D_{4}|^{2}\frac{dydt}{t^{d+1}}\right)^{\frac{1}{2}}\right\|_{L^{p}(w)}\leq C(a+1)|h|\|f\|_{L^{p}(w)}.
\end{equation}
For $D_{2}$, by Minkowski's inequality, we have
\begin{align*}
&\left(\int_{0}^{\infty}\int_{|x-y|<t}|D_{2}|^{2}\frac{dydt}{t^{d+1}}\right)^{\frac{1}{2}}\leq\int_{|x-z|> a|h|}K_{Q}(x,z)|b(x+h)-b(z)||f(z)|dz,
\end{align*}
where
$$K_{Q}(x,z)=\left(\int_{0}^{\infty}\int_{|x-y|<t}|Q_{t}(y+h,z)-Q_{t}(y,z)|^{2}\frac{dydt}{t^{d+1}}\right)^{\frac{1}{2}}.$$
We claim that for $|x-z|>2h$
\begin{equation}\label{S_kernel}
|K_{Q}(x,z)|\leq\frac{C_{l}}{(1+|x-z|(m_{\mathit{V}}(x)+m_{\mathit{V}}(z)))^{l}}\frac{|h|^{\delta_{0}}}{|x-z|^{n+\delta_{0}}},
\end{equation}
where $\delta_{0}$ be as in Lemma $\ref{kernal_property}$. In fact, consider $|x-z|>2h$ and define $E=\{y:\,|y-x|\geq|x-z|/2\}$. Hence, we can apply $(\rm ii)$ of Lemma $\ref{kernal_property}$ to obtain
\begin{align*}
|K_{Q}(x,z)|^{2}&\leq \int_{\mathbb{R}^{d}}\int_{|y-x|/2}|Q_{t}(y+h,z)-Q_{t}(y,z)|^{2}\frac{dtdy}{t^{d+1}}\\
&\leq C|h|^{\delta_{0}}\int_{\mathbb{R}^{d}}\int_{|y-x|/2}\frac{\rho(x)^{2l}}{t^{3d+1+2\delta_{0}+2l}}\frac{t^{2N}}{(t+|z-y|)^{2N}}dtdy\\
&\leq C|h|^{\delta_{0}}\int_{E}\int_{|y-x|/2}\frac{\rho(x)^{2l}}{t^{3d+1+2\delta_{0}+2l}}\frac{t^{2N}}{(t+|z-y|)^{2N}}dtdy\\
&\quad+C|h|^{\delta_{0}}\int_{E^{c}}\int_{|y-x|/2}\frac{\rho(x)^{2l}}{t^{3d+1+2\delta_{0}+2l}}\frac{t^{2N}}{(t+|z-y|)^{2N}}dtdy\\
&=:III_{1}+III_{2}.
\end{align*}
For $III_{1}$, we then have
$$III_{1}\leq C|h|^{\delta_{0}}\int_{E}\frac{\rho(x)^{2l}}{|y-x|^{3d+2\delta_{0}+2l}}dy\leq C\left(\frac{|x-z|}{\rho(x)}\right)^{-2l}\frac{|h|^{2\delta_{0}}}{|x-z|^{2d+2\delta_{0}}}.$$
If $y\in E^{c}$, then $|y-x|<|x-z|/2<|y-z|<2|x-z|$, and hence
\begin{align*}
III_{2}&\leq |h|^{2\delta_{0}}\int_{E^{c}}\left(\int_{|y-x|/2}^{|x-z|}+\int_{|x-z|}^{\infty}\right)\frac{\rho(x)^{2l}}{t^{3d+1+2\delta_{0}+2l}}\frac{t^{2N}}{(t+|z-y|)^{2N}}dtdy\\
&=:III_{2a}+III_{2b}.
\end{align*}
For $III_{2a}$  and $III_{2b}$, letting $d+\delta_{0}<N-l<(3d+2\delta_{0})/2$, we can get
\begin{align*}
III_{2a}&\leq C\frac{|h|^{2\delta_{0}}}{|x-z|^{2N}}\int_{E^{c}}\int_{|y-x|/2}^{\infty}\frac{\rho(x)^{2l}}{t^{3d+1+2\delta_{0}-2N+2l}}dtdy\\
&\leq C\frac{|h|^{2\delta_{0}}}{|x-z|^{2N}}\int_{|y-x|<|x-z|/2}\frac{\rho(x)^{2l}}{t^{3d+2\delta_{0}-2N+2l}}dtdy\\
&\leq C\left(\frac{|x-z|}{\rho(x)}\right)^{-2l}\frac{|h|^{2\delta_{0}}}{|x-z|^{2d+2\delta_{0}}},
\end{align*}
 and
\begin{align*}
III_{2b}&\leq C|h|^{\delta_{0}}\int_{E^{c}}\int_{|x-z|}^{\infty}\frac{\rho(x)^{2l}}{t^{3d+1+2\delta_{0}+2l}}dtdy\\
&\leq C\frac{\rho(x)^{2l}|h|^{2\delta_{0}}}{|x-z|^{3d+2\delta_{0}+2l}}\int_{|y-x|<|x-z|/2}dy\\
&\leq C\left(\frac{|x-z|}{\rho(x)}\right)^{-2l}\frac{|h|^{2\delta_{0}}}{|x-z|^{2d+2\delta_{0}}}.
\end{align*}
Combining the above inequalities, we obtain the desired inequality $\eqref{S_kernel}$.

Taking $a>2$, similar to estimate $I_{2}(x)$ in {\it Step III} of proof of Theorem $\ref{main_1}$, we obtain
\begin{equation}\label{D_2}
\left\|\left(\int_{0}^{\infty}\int_{|x-y|<t}|D_{2}|^{2}\frac{dydt}{t^{d+1}}\right)^{\frac{1}{2}}\right\|_{L^{p}(w)}\leq Ca^{-\delta_{0}}\|f\|_{L^{p}(w)}.
\end{equation}

It remains to estimate $D_{1}$.  Since
\begin{align*}
&\left(\int_{0}^{\infty}\int_{|x-y|<t}|D_{1}|^{2}\frac{dydt}{t^{d+1}}\right)^{\frac{1}{2}}\\
&\leq |b(x+h)-b(x)|\left(\int_{0}^{\infty}\int_{|x-y|<t}\left|\int_{|x-z|>a|h|}Q_{t}(y,z)f(z)dz\right|^{2}\frac{dydt}{t^{d+1}}\right)^{\frac{1}{2}}\\
&=:|b(x+h)-b(x)| S_{Q,a,h}f(x),
\end{align*}
we claim that
\begin{equation}\label{S_Q}
S_{Q,a,h}f(x)\lesssim M_{\mathit{V},\eta}(S_{Q}f)(x)+(M_{\mathit{V},\eta}(|f|^{q_{0}})(x))^{\frac{1}{q_{0}}}+M_{\mathit{V},\eta}f(x),
\end{equation}
where $1<q_{0}<p$. Let $B$ denote the ball centered at $x$ and with radius $r=a|h|/2$. Furthermore, let $f_{1}=f\chi_{2B}$ and $f_{2}=f-f_{1}$. Thus, by Lemma $\ref{ball_mean}$, we get
\begin{align*}
S_{Q,a,h}f(x)&=\frac{1}{|B|}\int_{B}S_{Q,a,h}f(x)d\xi\\
&\leq\frac{1}{|B|}\int_{B}S_{Q}f(\xi)d\xi+ \frac{1}{|B|}\int_{B}S_{Q}f_{1}(\xi)d\xi\\
&\quad+\frac{1}{|B|}\int_{B}|S_{Q}f_{2}(\xi)-S_{Q,a,h}f(x)|d\xi\\
&\leq M_{\mathit{V},\eta}(S_{Q}f)(x)+L_{1}+L_{2}.
\end{align*}
By Lemma $\ref{ball_mean}$, Lemma $\ref{little-paley}$ with $w=1$ and H\"{o}lder's inequality, for any $1<q_{0}<p$, we show that
\begin{align*}
L_{1}&\leq \frac{1}{|B|^{1/q_{0}}}\left(\int_{B}|S_{Q}f_{1}(\xi)|^{q_{0}}d\xi\right)^{\frac{1}{q_{0}}}\\
&\leq C\frac{1}{|B|^{1/q_{0}}}\left(\int_{B}|f_{1}(\xi)|^{q_{0}}d\xi\right)^{\frac{1}{q_{0}}}\leq C(M_{\mathit{V},\eta}(|f|^{q_{0}})(x))^{\frac{1}{q_{0}}}.
\end{align*}
Now, let us estimate $L_{2}$. By the Minkowski inequality, we have
\begin{align*}
&|S_{Q}f_{2}(\xi)-S_{Q,a,h}f(x)|\\
&\leq \left|\left(\int_{0}^{\infty}\int_{|x-y|<t}\left|\int_{|x-z|>a|h|}Q_{t}(y+\xi-x,z)f(z)dz\right|^{2}\frac{dydt}{t^{d+1}}\right)^{\frac{1}{2}}\right.\\
&\quad\quad\left.-\left(\int_{0}^{\infty}\int_{|x-y|<t}\left|\int_{|x-z|>a|h|}Q_{t}(y,z)f(z)dz\right|^{2}\frac{dydt}{t^{d+1}}\right)^{\frac{1}{2}}\right|\\
&\leq\left(\int_{0}^{\infty}\int_{|x-y|<t}\left|\int_{|x-z|>a|h|}|Q_{t}(y+\xi-x,z)-Q_{t}(y,z)||f(z)|dz\right|^{2}\frac{dydt}{t^{d+1}}\right)^{\frac{1}{2}}\\
&\leq \int_{|x-z|>a|h|}\left(\int_{0}^{\infty}\int_{|x-y|<t}|Q_{t}(y+\xi-x,z)-Q_{t}(y,z)|\frac{dydt}{t^{d+1}}\right)^{\frac{1}{2}}|f(z)|dz.
\end{align*}
Since $a>2$ and $|\xi-x|<a|h|/2$ implies $|x-z|>2|\xi-x|$, it holds
\begin{align*}
&\left(\int_{0}^{\infty}\int_{|x-y|<t}|Q_{t}(y+\xi-x,z)-Q_{t}(y,z)|\frac{dydt}{t^{d+1}}\right)^{\frac{1}{2}}\\
&\leq \frac{C_{l}}{(1+|x-z|(m_{\mathit{V}}(x)+m_{\mathit{V}}(z)))^{l}}\frac{|\xi-x|^{\delta_{0}}}{|x-z|^{d+\delta_{0}}},
\end{align*}
which implies that
\begin{align*}
&|S_{Q}f_{2}(\xi)-S_{Q,a,h}f(x)|\\
&\leq\int_{|x-z|>a|h|}\frac{C_{l}}{(1+|x-z|(m_{\mathit{V}}(x)+m_{\mathit{V}}(z)))^{l}}\frac{|\xi-x|^{\delta_{0}}}{|x-z|^{d+\delta_{0}}}|f(z)|dz\\
&\leq C\sum_{j=0}^{\infty}\int_{2^{j}a|h|<|x-z|\leq 2^{j+1}a|h|}\frac{1}{(1+(2^{j-1}a|h|)m_{\mathit{V}}(x))^{l}}\frac{(a|h|)^{\delta_{0}}}{(2^{j-1}a|h|)^{d+\delta_{0}}}|f(z)|dz\\
&\leq C\sum_{j=0}^{\infty}\int_{B(x,2^{j}a|h|)}\frac{1}{(1+(2^{j-1}a|h|)m_{\mathit{V}}(x))^{l}}\frac{(a|h|)^{\delta_{0}}}{(2^{j-1}a|h|)^{d+\delta_{0}}}|f(z)|dz\\
&\leq CM_{\mathit{V},\eta}f(x)\sum_{j=0}^{\infty}\frac{1}{2^{(j-1)\delta_{0}}}\\
&\leq CM_{\mathit{V},\eta}f(x).
\end{align*}
Hence, we obtain $L_{2}\leq CM_{\mathit{V},\eta}f(x)$. Combining $L_{1}$ and $L_{2}$, the claim $\eqref{S_Q}$ holds. Now we are ready to give the estimates of $D_{1}$. Noticing $b\in\mathcal{C}_{c}^{\infty}(\mathbb{R}^{d})$, we have $|b(x+h)-b(x)|\leq C|h|$. Then, applying the claim $\eqref{S_Q}$ for $1<q_{0}<p$, Lemma $\ref{little-paley}$ and Lemma $\ref{maximal_bound}$, we have
\begin{equation}\label{D_1}
\left\|\left(\int_{0}^{\infty}\int_{|x-y|<t}|D_{1}|^{2}\frac{dydt}{t^{d+1}}\right)^{\frac{1}{2}}\right\|_{L^{p}(w)}\leq C|h|\|f\|_{L^{p}(w)}.
\end{equation}
It follows from $\eqref{D_3}$, $\eqref{D_4}$, $\eqref{D_2}$ and $\eqref{D_1}$ that
$$\sup_{f\in F}\|S_{Q,b}f(\cdot+h)-S_{Q,b}f(\cdot)\|_{L^{p}(w)}\leq C(|h|+a|h|+(a+1)|h|+a^{-\delta_{0}})\|f\|_{L^{p}(w)}.$$
Consequently, for any $\epsilon>0$, we can choose $a$ large enough such that $$\max\{{1}/{a^{2}},{1}/{(a+1)^{2}}, {1}/{a^{\delta_{0}}}\}<\epsilon,$$
and set $|h|$ being small enough such that $|h|<\min\{1/a^{2},1/(a+1)^{2}\}$. Letting $a\rightarrow\infty$, we have the uniformly equicontinous (condition (iii)) of $\mathcal{S}$.

Next we will prove that $g_{b}$ is a compact operator from $L^{p}(w)$ to itself. In order to this result, we need the following claim:
\begin{claim}\label{g_kernel}
Let $\delta_{0}$ as same as in Lemma $\ref{kernal_property}$. Then for any $l$ we have
\begin{equation*}
\left(\int_{0}^{\infty}|Q_{t}(x,y)|^{2}\frac{dt}{t}\right)^{\frac{1}{2}}\leq\frac{C_{l}}{(1+|x-y|(m_{\mathit{V}}(x)+m_{\mathit{V}}(y)))^{l}}\frac{1}{|x-y|^{d}}
\end{equation*}
and
\begin{equation*}
\left(\int_{0}^{\infty}|Q_{t}(x,y)-Q_{t}(\xi,y)|^{2}\frac{dt}{t}\right)^{\frac{1}{2}}\leq\frac{C_{l}}{(1+|x-y|(m_{\mathit{V}}(x)+m_{\mathit{V}}(y)))^{l}}\frac{|x-\xi|^{\delta_{0}}}{|x-y|^{d+\delta_{0}}}
\end{equation*}
for $|x-y|>2|x-\xi|$.
\end{claim}
If Claim $\ref{g_kernel}$ holds,  then similar to the proof of $S_{Q,b}$, we can easily obtain  $g_{b}$ is a compact operator on $L^{p}(w)$.  Now we proceed to prove Claim $\ref{g_kernel}$. Using inequality $(\rm a)$ in Lemma $\ref{kernal_property}$, we obtain
\begin{align*}
\int_{0}^{\infty}|Q_{t}(x,y)|^{2}\frac{dt}{t}&\leq \int_{0}^{\infty}\frac{\rho(x)^{2l}}{t^{2d+1+2l}}\frac{t^{2N}}{(t+|x-y|)^{2N}}dt\\
&\leq \left(\int_{0}^{|x-y|}+\int_{|x-y|}^{\infty}\right)\frac{\rho(x)^{2l}}{t^{2d+1+2l}}\frac{t^{2N}}{(t+|x-y|)^{2N}}dt\\
&=:W_{1}+W_{2}.
\end{align*}
For $W_{1}$ and $W_{2}$, let  $d+l<N<(2d+2l+1)/2$. We have the following estimates:
$$W_{1}\leq\frac{\rho(x)^{2l}}{|x-y|^{2N}}\int_{0}^{|x-y|}\frac{1}{t^{2d+1+2l-2N}}dt\leq C\left(\frac{|x-y|}{\rho(x)}\right)^{-2l}\frac{1}{|x-y|^{2d}}$$
and
$$W_{2}\leq \int_{|x-y|}^{\infty} \frac{\rho(x)^{2l}}{t^{2d+1+2l}}\leq C\left(\frac{|x-y|}{\rho(x)}\right)^{-2l}\frac{1}{|x-y|^{2d}}.$$
Combining the above inequalities, we get the first inequality of Claim $\ref{g_kernel}$. The second inequality of Claim $\ref{g_kernel}$ is similarly to be proved, we omit the details.

By the above arguments, we finish  the proof of Theorem $\ref{main_3}$.
\end{proof}

\subsection{The weighted compactness of $[b,T_{i}], i=1,2,3$}

\quad

\quad

Next, we discuss the weighted compactness of $[b,T_{i}], i=1,2,3,$ on $L^{p}(w)$.
\begin{theorem}\label{main_2}
	Suppose that $\mathit{V}\in B_{q}, q> d/2$. Let $b\in {\rm CMO}(\rho)$. Then the following three statements hold.
	\item{\rm (i)} If $q^{\prime}\leq p<\infty$ and $w\in A_{p/q^{\prime}}^{\rho}$, $[b,T_{1}]$ is a compact operator from $L^{p}(w)$ to itself.
	
	\item{\rm (ii)} If $(2q)^{\prime}< p<\infty$ and $w\in A_{p/(2q)^{\prime}}^{\rho}$,
	$[b,T_{2}]$ is a compact operator from $L^{p}(w)$ to itself.
	\item{\rm (iii)} If $p_{0}^{\prime}< p<\infty$ and $w\in A_{p/p_{0}^{\prime}}^{\rho}$, where $1/p_{0}=1/q-1/d$ and $d/2< q<d$,
	$[b,T_{3}]$ is a compact operator from $L^{p}(w)$ to itself.
\end{theorem}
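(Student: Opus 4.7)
The plan is to follow the Fréchet--Kolmogorov compactness strategy used in the proof of Theorem \ref{main_1}, adapted to the kernels of $T_1$, $T_2$, $T_3$ described in Lemma \ref{T_i kernel}. By Lemma \ref{weighted_Ti_commutators} the commutator $[b,T_i]$ is continuous on $L^p(w)$ with norm controlled by $\|b\|_{\text{BMO}(\rho)}$, and a density argument combined with Lemma \ref{weighted_Ti_commutators} reduces the problem to showing that, for $b\in\mathcal{C}_c^\infty(\mathbb{R}^d)$ with $\mathrm{supp}\,b\subset B(0,R)$ and a bounded family $F\subset L^p(w)$, the set $\mathcal{F}_i=\{[b,T_i]f:f\in F\}$ satisfies the three conditions of Lemma \ref{Frechet-Kolmogrov}. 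Condition (i) is immediate from Lemma \ref{weighted_Ti_commutators}.

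For condition (ii), fix $\nu>2$ and $|x|>\nu R$. Since $b(x)=0$ there, only $\int K_i(x,z)b(z)f(z)\,dz$ contributes. Using the size estimates \eqref{K_{1}_property1}, \eqref{K_{2}_property1}, \eqref{K_{3}_property1} together with Hölder's inequality applied to the $V$-factor with exponent $q$ (case $i=1$), $2q$ (case $i=2$) or the Hardy--Littlewood--Sobolev inequality with $1/p_0=1/q-1/d$ (case $i=3$), precisely as in the proof of Theorem \ref{weighted_maximal_Ti}, one obtains a pointwise bound for $[b,T_i]f(x)$ that decays as a negative power of $|x|\,m_V(0)$. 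Invoking \eqref{m_{v} property_2} to transfer between $m_V(x)$ and $m_V(0)=1/\rho(0)$, then using \eqref{A_p property1} of Lemma \ref{A_p property} to dominate $w(B(0,2^{j+1}\nu R))$ by $w(B(0,R))\cdot(2^{j+1}\nu)^{dp}(1+2^{j+1}\nu R/\rho(0))^{\theta p}$, and summing the resulting geometric series exactly as in \emph{Step II} of Theorem \ref{main_1} (splitting into the cases $R>\rho(0)$ and $R\le\rho(0)$), yields
\[
\int_{|x|>\nu R}|[b,T_i]f(x)|^p w(x)\,dx\longrightarrow 0 \quad \text{as } \nu\to\infty,
\]
uniformly in $f\in F$.

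For condition (iii), decompose $[b,T_i]f(x+h)-[b,T_i]f(x)$ into four integrals $I_{i,1},\ldots,I_{i,4}$ as in \emph{Step III} of Theorem \ref{main_1}, splitting along $|x-y|>a|h|$ versus $|x-y|\le a|h|$. The term $I_{i,1}$ is bounded by $|h|\,\|\nabla b\|_\infty\, T_{i,\mathrm{Max}}f(x)$, and Theorem \ref{weighted_maximal_Ti} supplies the required $L^p(w)$ estimate. The smoothness term $I_{i,2}$ is controlled by means of the regularity estimates \eqref{K_{1}_property2}, \eqref{K_{2}_property2}, \eqref{K_{3}_property2}, dyadically decomposed over annuli $2^k a|h|<|x-y|\le 2^{k+1}a|h|$, and the $V$-factor inside each annulus is absorbed by Hölder with exponent $q^{\prime}$, $(2q)^{\prime}$ or $p_0^{\prime}$ combined with \eqref{V_property_2} and \eqref{V_property_3}, yielding a bound of the form $a^{-\delta}\bigl(M_{V,\eta}(|f|^{q^{\prime}})(x)\bigr)^{1/q^{\prime}}$ (and analogously for $T_2$, $T_3$). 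The short-range terms $I_{i,3}$, $I_{i,4}$ are handled identically, producing the factor $a|h|$ and $(a+1)|h|$ in front of the same maximal quantity. Applying Lemma \ref{maximal_bound} to convert these pointwise bounds to $L^p(w)$ bounds, choosing $a$ large so that $a^{-\delta}<\epsilon$ and then $|h|$ small so that $a|h|<\epsilon$, establishes uniform equicontinuity.

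The main obstacle is the case $i=3$. The kernel estimate \eqref{K_{3}_property1} carries the extra inner $V$-integral $\int_{B(y,|x-y|)} V(\xi)/|y-\xi|^{d-1}\,d\xi$, which cannot be handled by the same Hölder argument as for $T_1$, $T_2$. As in the proof of Theorem \ref{weighted_maximal_Ti}(iii), one must invoke the Hardy--Littlewood--Sobolev inequality with $1/p_0=1/q-1/d$, combined with the reverse-Hölder bound \eqref{V_property_2} and \eqref{V_property_3}, to replace this inner integral by $\bigl(M_{V,\eta}(|f|^{p_0^{\prime}})(x)\bigr)^{1/p_0^{\prime}}$; the dimensional identity $d/q-d+d/p_0^{\prime}+d-2=d-1$ is exactly what closes the estimate, and the constraint $d/2<q<d$ is what makes this step legitimate. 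Once this substitution is made, the remainder of the argument for $[b,T_3]$ proceeds in complete parallel with $[b,T_1]$ and $[b,T_2]$.
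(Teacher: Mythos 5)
Your proposal follows the same Fr\'{e}chet--Kolmogorov route that the paper intends: the paper's own ``proof'' of Theorem~\ref{main_2} is a single sentence deferring to the argument of Theorem~\ref{main_1} and to \cite{li-peng}, and what you have written is precisely the content that sentence suppresses. You correctly identify the key ingredients the paper relies on but does not spell out: Lemma~\ref{weighted_Ti_commutators} for the density reduction and condition~(i); the size estimates of Lemma~\ref{T_i kernel} with a H\"older (resp.\ Hardy--Littlewood--Sobolev) absorption of the extra $V$, $V^{1/2}$ (resp.\ inner $V$-potential) factor for condition~(ii); the four-term decomposition plus Theorem~\ref{weighted_maximal_Ti} and the regularity estimates \eqref{K_{1}_property2}--\eqref{K_{3}_property2} for condition~(iii); and the dimensional identity $d/q-d+d/p_0'+d-2=d-1$ that closes the $T_3$ case. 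This is essentially the same approach as the paper.

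One point worth flagging: in case~(i) the theorem allows the endpoint $p=q'$, but your control of $I_{1,1}$ (and hence of condition~(iii)) invokes Theorem~\ref{weighted_maximal_Ti}(i), which is stated only for the strict inequality $q'<p$; similarly Lemma~\ref{maximal_bound} is used with $\eta=p'$, which is a strict-inequality result. Thus as written your argument proves the case $q'<p<\infty$ and does not directly cover $p=q'$. This gap is already latent in the paper's statement and is not introduced by you, but if you intend to claim the full range you should either restrict to $q'<p$ or supply a separate endpoint argument.
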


\begin{proof}
	Similar to the proof of Theorem $\ref{main_1}$, and following the process of the proofs of Theorem 2.1, Theorem 2.5 and Theorem 2.7 in \cite{li-peng}, we can easily obtain the desired results. Hence, we omit the details.
\end{proof}

Let
$$T_{1}^{\ast}=\mathit{V}(-\Delta+V)^{-1}, \quad T_{2}^{\ast}=\mathit{V}^{1/2}(-\Delta+\mathit{V})^{1/2}\quad\text{and}\quad T_{3}^{\ast}=\nabla(-\Delta+\mathit{V})^{-1/2}.$$
By duality, the following weighted  $L^{p}$-compactness of $T^{\ast}_{i}$, $i=1,2,3$ can be deduced from Theorem $\ref{main_2}$ immediately.
\begin{corollary}\label{corollary_1}
	Suppose that $\mathit{V}\in B_{q}$ and $q> d/2$. Let $b\in {\rm CMO}(\rho)$. Then the following three statements are hold.
	\item{\it (i)} If $1<p<q$ and $w^{-\frac{1}{p-1}}\in A_{p^{\prime}/q^{\prime}}$, $[b,T_{1}^{\ast}]$ is a compact operator from $L^{p}(w)$ to itself.
	
	\item{\it (ii)} If $1<p<2q$ and $ w^{-\frac{1}{p-1}}\in A_{p^{\prime}/(2q^{\prime})}$,
	$[b,T_{2}^{\ast}]$ is a compact operator from $L^{p}(w)$ to itself.
	\item{\it (iii)} If $1<p<p_{0}$ and $w^{-\frac{1}{p-1}}\in A_{p^{\prime}/p_{0}^{\prime}}$, where $1/p_{0}=1/q-1/d$ and $d/2< q<d$,
	$[b,T_{3}]$ is a compact operator from $L^{p}(w)$ to itself.
\end{corollary}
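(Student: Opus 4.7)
The plan is to deduce Corollary \ref{corollary_1} from Theorem \ref{main_2} via a standard duality argument, using that each $T_i^\ast$ is, up to sign, the formal adjoint of $T_i$ with respect to the $L^2$ pairing. Since $-\Delta+V$ is self-adjoint on $L^2(\mathbb{R}^d)$, one checks by formal manipulation that $T_1^\ast=V(-\Delta+V)^{-1}$ is the adjoint of $T_1=(-\Delta+V)^{-1}V$, that $T_2^\ast=V^{1/2}(-\Delta+V)^{-1/2}$ is the adjoint of $T_2$, and that $T_3^\ast=\nabla(-\Delta+V)^{-1/2}$ is the adjoint of $T_3=(-\Delta+V)^{-1/2}\nabla$ up to a sign coming from integration by parts. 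For any bounded real-valued $b$, a direct computation with the $L^2$ pairing gives
\[
\langle [b,T_i]f,g\rangle = \langle T_i f,\,bg\rangle-\langle bf,\,T_i^\ast g\rangle = -\langle f,\,[b,T_i^\ast]g\rangle,
\]
so $([b,T_i])^\ast=-[b,T_i^\ast]$ as bounded operators.

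Next I would invoke the duality principle that a bounded linear operator $S$ on $L^p(w)$ is compact if and only if its Banach-space adjoint is compact on the dual $(L^p(w))^\ast = L^{p'}(w^{1-p'})$, where $w^{1-p'}=w^{-1/(p-1)}$. Applying this with $S=[b,T_i^\ast]$ reduces the compactness of $[b,T_i^\ast]$ on $L^p(w)$ to the compactness of $[b,T_i]$ on $L^{p'}(w^{-1/(p-1)})$.

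Then I would match the hypotheses. Set $\tilde p=p'$ and $\tilde w=w^{-1/(p-1)}$. The condition $1<p<q$ becomes $q'<\tilde p<\infty$, and the assumption $w^{-1/(p-1)}\in A_{p'/q'}^\rho$ reads $\tilde w\in A_{\tilde p/q'}^\rho$, which are precisely the hypotheses of Theorem \ref{main_2}(i) for $[b,T_1]$. Hence $[b,T_1]$ is compact on $L^{\tilde p}(\tilde w)$, and by duality $[b,T_1^\ast]$ is compact on $L^p(w)$. The translations for (ii) and (iii) are identical, using $q'\rightsquigarrow (2q)'$ and $q'\rightsquigarrow p_0'$ respectively, together with Lemma \ref{A_p property}(ii) to see that the $A_p^\rho$ classes behave correctly under the weight involution $w\mapsto w^{-1/(p-1)}$.

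The main obstacle is not the functional-analytic dualization, which is routine, but rather justifying the adjoint identities $([b,T_i])^\ast = -[b,T_i^\ast]$ off $L^2$: one must verify that $T_i^\ast$ as defined in the corollary really is the Banach-space adjoint of $T_i$ as an operator between the relevant weighted spaces, which follows from the boundedness supplied by Lemma \ref{weighted_Ti} and a density argument based on $\mathcal{C}_c^\infty(\mathbb{R}^d)$ being dense in both $L^{\tilde p}(\tilde w)$ and $L^p(w)$. Once this identification is in place, Theorem \ref{main_2} immediately yields the three statements of Corollary \ref{corollary_1}.
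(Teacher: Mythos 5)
Your proposal is correct and takes exactly the route the paper intends: the paper proves Corollary \ref{corollary_1} simply by the remark ``by duality \dots can be deduced from Theorem \ref{main_2} immediately,'' and your argument supplies precisely those details -- the identification $([b,T_i])^{\ast}=-[b,T_i^{\ast}]$ on the pairing $\langle f,g\rangle=\int fg$, the dual space $(L^p(w))^{\ast}=L^{p'}(w^{-1/(p-1)})$, Schauder's theorem, and the translation of indices $(p,w)\mapsto(p',w^{-1/(p-1)})$ matching the hypotheses of Theorem \ref{main_2}. (Incidentally, the paper's statement of part (iii) writes $[b,T_3]$ where $[b,T_3^{\ast}]$ is clearly intended, and omits the $\rho$ superscript on the $A_p$ classes; your reading corrects both.)
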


\quad

\subsection*{Acknowledgements}
This work was in part supported by National Natural Science Foundation of China (Nos. 11871293, 11871452, 12071473, 12071272) and Shandong Natural Science Foundation of China (No. ZR2017JL008)

\end{document}